\newcommand{\comment}[1]{}
\newcommand{\BEA}{\begin{eqnarray}}
\newcommand{\EEA}{\end{eqnarray}}
\newcommand{\BR}{\mathbb{R}}
\newtheorem{lem}{Lemma}[section]
\newtheorem{theo}{Theorem}[section]
\newtheorem{prop}{Proposition}[section]
\newtheorem{defi}{Definition}[section]
\newtheorem{remark}{Remark}
\title{A Higher Order Local Mesh Method for Approximating 1-Laplacians on Unknown Manifolds}
\author{
  J. Wilson Peoples \\
  Department of Mathematics \\
  The Pennsylvania State University, University Park, PA 16802, USA\\
  \texttt{peoplesjwilson@gmail.com} \\
  \And
John Harlim \\
  Department of Mathematics, \\ Institute for Computational and Data Sciences \\
  The Pennsylvania State University, University Park, PA 16802, USA\\
  \texttt{jharlim@psu.edu} \\
}
\begin{document}

\maketitle

\begin{abstract}

We introduce a numerical method for approximating arbitrary differential operators on vector fields in the weak form given point cloud data sampled randomly from a $d$ dimensional manifold embedded in $\mathbb{R}^n$. This method generalizes the local linear mesh method to the local curved mesh method, thus, allowing for the estimation of differential operators with nontrivial Christoffel symbols, such as the Bochner or Hodge Laplacians. In particular, we leverage the potentially small intrinsic dimension of the manifold $(d \ll n)$ to construct local parameterizations that incorporate both local meshes and higher-order curvature information. The former is constructed using low dimensional meshes obtained from local data projected to the tangent spaces, while the latter is obtained by fitting local polynomials with the generalized moving least squares. Theoretically, we prove the spectral convergence for the proposed method for the estimation of the Bochner Laplacian. We provide numerical results supporting the theoretical convergence rates for the Bochner and Hodge Laplacians on simple manifolds. 
\end{abstract}

\keywords{ Bochner Laplacian \and Hodge Laplacian \and vector Laplacians \and manifold learning \and local meshes \and eigenvalue convergence }


\section{Introduction}

The graph Laplacian is a widely used tool in unsupervised learning tasks such as dimensionality reduction \cite{belkin2003laplacian,coifman2006diffusion}, clustering \cite{ng2002spectral,von2008consistency}, and semi-supervised learning \cite{belkin2002semi,calder2020properly}. Given a finite sample set $X=\{x_1,\ldots, x_n\}$, the graph Laplacian is constructed by quantifying pairwise affinities between data points. Under the manifold assumption $X\subset M$, where $M$ is a $d$-dimensional Riemannian submanifold of $\mathbb{R}^m$, a standard approach is to build the graph using a kernel that measures the closeness of data points using the ambient Euclidean metric. The key motivation is that for sufficiently local kernels \cite{coifman2006diffusion,berry2016local}, including compactly supported kernels \cite{trillos2020error,calder2019improved}, Euclidean distances in $\mathbb{R}^m$ provide accurate approximations of the geodesic distances on $M$. This observation firmly establishes the graph Laplacian as a fundamental manifold learning tool: it enables access to the intrinsic geometry of the data using only ambient-space information. In particular, when the graph Laplacian is shown to converge, in the large-sample limit, to its continuous analogue, the Laplace–Beltrami operator, it becomes a consistent estimator of the underlying geometric structure. Since Laplace-Beltrami operator acts on functions, this class of approach allows one to represent functions on the manifold that the data lie on. 

Recent extensions for representing vector fields on manifolds (or, dually, differential 1-forms) have also been proposed. One prominent approach is vector diffusion maps \cite{singer2012vector}, which generalizes diffusion maps by incorporating local parallel transport into the kernel construction and is shown to be consistent for approximating the connection Laplacian. Related frameworks, including Discrete Exterior Calculus (DEC) \cite{hirani2003discrete} and Finite Element Exterior Calculus (FEEC) \cite{arnold2018finite}, have also been developed. However, these methods are often ill-suited for point-cloud data, as they require additional geometric structure, global meshes or simplicial complexes, that may not be available in many data-driven settings. Particularly, when the point cloud data are randomly sampled from a low dimensional manifold embedded in high dimensional Euclidean space, the construction of global meshes in the high dimensional ambient space is computationally not feasible. Moreover, the spectral consistency of DEC for estimating Laplace–de Rham operators remains unclear. 

These limitations are addressed by the Spectral Exterior Calculus (SEC) \cite{berry2020spectral}, a Galerkin-based framework that represents differential forms and vector fields using eigenfunctions of the Laplace–Beltrami operator. The accuracy of SEC, however, depends critically on the quality of the estimated Laplace–Beltrami eigenpairs, since all differential operators are constructed through spectral expansions of the zero-form Laplacian. In \cite{berry2020spectral}, Diffusion Maps are used to estimate these eigenfunctions. We argue that the performance of SEC can be further improved by the approach developed in this paper, which enables accurate estimation of eigensolutions of Laplace operators, including the Laplace–Beltrami, Bochner and Hodge Laplacians.

In our previous work \cite{harlim2023radial}, we developed a mesh-free framework for estimating arbitrary differential operators acting on tensor fields. The original formulation was based on radial basis function (RBF) interpolation, and its computational efficiency was later improved through the use of generalized moving least squares (GMLS) \cite{liang2013solving,jiang2024generalized}. This approach employs moving least squares to estimate local coordinate charts and subsequently approximates differential operators using standard pointwise formulas from differential geometry. However, because the method is inherently pointwise, it yields a nonsymmetric discretization of the Laplacian operators, which significantly complicates both computation and theoretical analysis. Although a variational formulation of these differential operators was also introduced in \cite{harlim2023radial}, the reliance on Monte Carlo integration substantially degrades the accuracy of the resulting operator estimates.

In this paper, we build on the local mesh approach of \cite{lai2013local}, replacing Monte Carlo integration over point cloud data with a higher-order local mesh construction. The original method \cite{liang2013solving} constructs linear local meshes by projecting data points onto estimated tangent spaces inferred from the point cloud. While this framework enables the estimation of the Laplace–Beltrami operator, it fails to accurately approximate operators whose local expressions depend on higher-order curvature quantities, such as Christoffel symbols. This limitation arises because the induced Riemannian metric is constant on each triangle of the local meshes.

To overcome this issue, we introduce a higher-order local curved mesh based on Generalized Moving Least Squares (GMLS), generalizing the approach in \cite{lai2013local}. Although this framework can approximate arbitrary differential operators acting on tensor fields, we focus here on estimating 1-Laplacians on vector fields. In particular, we establish the spectral consistency with the Bochner Laplacian in the large-data limit. Extensive numerical experiments further verify the convergence of the proposed Hodge and Bochner Laplacian estimators using randomly sampled ambient point cloud data. Practically, the estimated tensor fields can directly evaluate on any out-of-sample data points on the manifold without requiring an additional interpolation scheme that is needed in graph-Laplacian based methods, including Vector Diffusion Maps and SEC.

The remainder of this paper is organized as follows. In Section \ref{local-metrics-section}, we review the local frameworks of \cite{lai2013local} and \cite{liang2013solving}. We also demonstrate how the parameterization of \cite{liang2013solving} can be used on the local mesh of \cite{lai2013local} to obtain local curved meshes. In Section \ref{weak-operator-learning}, we formulate how the local charts of Section \ref{local-metrics-section} can be used to obtain weak form estimations of arbitrary differential operators on tensor fields. In Section \ref{theoretical-guarantees-section}, we prove the spectral convergence for the estimation of Bochner Laplacian. Finally, in Section \ref{numerical-results-section}, we demonstrate the numerical convergence of the local curved mesh method in estimating the Hodge and Bochner Laplacians on some example manifolds. We supplement this paper with two appendices: In Appendix A, we report the detailed calculation for the components of the mass and stiffness matrices for both the Bochner and Hodge Laplacians. In Appendix B, we report the proofs of some technical lemmas.

\section{Local Metric Learning}
\label{local-metrics-section}
This section provides an overview of two existing frameworks which are key to the novel method introduced in this paper. We begin by giving a description of the Local Mesh Method \cite{lai2013local}, which leverages local tangent space approximations to construct triangular meshes near each element of a given point cloud. We also present a brief overview of the Generalized Moving Least Squares technique \cite{liang2013solving}, in which local parameterizations of the form $\Phi: T_{x}M \to M$ are approximated by fitting polynomials on the estimated tangent spaces using a weighted least squares. Subsequently, we introduce the Local Curved Mesh Method, which serves as the central idea of this paper, amalgamating the aforementioned methodologies.
To allow for a concise description of each method, we focus on learning the Riemannian metric of an unknown manifold on a local chart throughout this section. The ideas of this section, however, can be extended to approximate more complicated objects on manifold (e.g., operators on tensor bundles, as shown in Section \ref{weak-operator-learning}). 

\subsection{Local tangent space approximation}
\label{local-tangent-space-subsection}
Given a point cloud $X = \{x_1, \dots, x_N\} \subseteq \mathbb{R}^n$ sampled from an unknown manifold $M$ of dimension $d \ll n$, we wish to obtain local estimation of $T_{x_i}M$, the tangent space at each point, as each method of locally approximating a Riemannian metric described in this section hinges on such an estimation. This problem is well studied in the literature, and is frequently addressed using local SVD  \cite{donoho2003hessian,tyagi2013tangent,zhang2004principal} or local PCA, as in \cite{lai2013local,liang2013solving}. More recently, a 2nd order local SVD method introduced in \cite{harlim2023radial} improves on local SVD by keeping track of higher order terms. For simplicity, we provide a brief overview of local PCA. 

Given a point $x_i$, denote it's $k$ nearest neighbors by $K(i) = \{x_{i_1}, \dots, x_{i_k}\}$. In this notation, $x_i=x_{i_1}$. Consider the local $n\times n$ covariance matrix 
$$
P_i = \sum_{x \in K(i)} (x - \mu_i)(x - \mu_i)^\top,
$$
where $\mu_i = \frac{1}{k} \sum_{x \in K(i)} x$. In practice, we found insignificant accuracy differences using either the average $\mu_i$ or the base point $x_i$ in placing $\mu_i$ away from the boundary. After diagonalizing this matrix, one obtains $n$ eigenpairs $\left(\lambda^{(i)}_1, \mathbf{t}^{(i)}_1\right), \dots, \left(\lambda^{(i)}_n, \mathbf{t}^{(i)}_n\right)$ with $\lambda^{(i)}_j \geq \lambda^{(i)}_{j+1}$. Since the intrinsic dimension of the data is $d$, one expects to find that 
$$
\lambda^{(i)}_1 \geq \lambda^{(i)}_2 \geq \dots \geq \lambda^{(i)}_d \gg \lambda^{(i)}_{d+1},
$$ 
given that the sampling is sufficiently dense around $x_i$. Thus, one can determine the dimension $d$, and obtains an orthonormal basis $\left\{\mathbf{t}^{(i)}_1, \dots, \mathbf{t}^{(i)}_d\right\}$ estimating the tangent space at $x_i$. We denote the copy of $\mathbb{R}^d$ given by the span of these coordinates by $\overline{T_{x_i} M}$. 
In what follows, we assume such an estimation is obtained, and denote the basis vectors as above. 

\subsection{Approximation of metric tensor with linear local meshes}
\label{local-mesh-section}
Given $x_i$, we aim to obtain a $d$-dimensional mesh for $K(i)$, which is $x_i$ and its $k$-nearest neighbors. To simplify the discussion, we assume $d=2$. In \cite{lai2013local}, they propose to solve this problem by first projecting $K(i)$ to the estimated tangent space:
\BEA
T(i) := \left\{ \underbrace{\left(\mathbf{t}^{(i)}_1 \cdot (x_{i_1} - x_{i_1}), \mathbf{t}^{(i)}_2 \cdot (x_{i_1} - x_{i_1})\right)}_{(0,0)= \vec{0}}, \dots, \left(\mathbf{t}^{(i)}_1 \cdot (x_{i_k} - x_{i_1}), \mathbf{t}^{(i)}_2 \cdot (x_{i_k} - x_{i_1}) \right) \right\} \subseteq \overline{T_{x_i} M}. \notag
\EEA
For simplicity of future notation, we define 
\BEA
\vec{v}_{i_r} = 
\begin{pmatrix} 
\mathbf{t}^{(i)}_1 \cdot (x_{i_r} - x_{i_1}) \\ 
\mathbf{t}^{(i)}_2 \cdot (x_{i_r} - x_{i_1})
\end{pmatrix}, \label{approximate-tangent-data-equation}
\EEA
so that 
\BEA
T(i) = \left\{ \vec{v}_{i_1}=\vec{0}, \vec{v}_{i_2}, \dots, \vec{v}_{i_k} \right\}. \notag
\EEA
A mesh on $T(i)$, denoted by $\mathbf{R}(i)$, is obtained using standard methods, e.g., the Delauney triangulation. Denote by $R(i)$ the first ring of this triangulation, that is, $R(i)$ is set of $\ell$ triangles of this mesh which include the basepoint $(0,0)$. These necessarily take the form
$$
R(i) =  \left\{ T_{i,1}, \dots , T_{i,\ell}\right\} \subset \mathbf{R}(i),
$$
where each $T_{i,r} = [\vec{0}, \vec{v}_{i_{j_r}}, \vec{v}_{i_{k_r}}]$. Consider any such triangle. This can be parameterized by the standard Barycentric parameterization with the canonical right triangle $T = \{ (u_1, u_2) | u_1, u_2 \geq 0, u_1 + u_2 \leq 1 \}$  via the map $\Phi_{T_{i,r}}: T \to T_{i,r} \subseteq \mathbb{R}^d$ defined by 
\BEA
\label{triangle-to-intrinsic-map}
(u_1, u_2) \mapsto u_1 \vec{v}_{i_{j_r}} + u_2 \vec{v}_{i_{k_r}}. 
\EEA
Since the first ring can be ``lifted" from the tangent space to the manifold via the one-to-one correspondence between $T(i)$ and $K(i)$ given by 
$$
\vec{v}_{i_r} \mapsto x_{i_r}.
$$
we obtain the map to the global coordinates, $\tilde{\Phi}_{T_{i,r}}: T \to \mathbb{R}^n$, defined by 
\BEA
\label{triangle-to-ambient-map}
(u_1, u_2) \mapsto u_1 (x_{i_{j_r}} - x_i) + u_2 (x_{i_{k_r}} - x_i) + x_i. 
\EEA
Let $y\in \tilde{\Phi}_{T_{i,r}}(T)\subset \BR^n$. Based on the barycentric parameterization, there exists $(u_1,u_2)\in T$ such that
\[
y = (1-u_1-u_2)x_i + u_1 x_{i_{j_r}} + u_2 x_{i_{k_r}} = x_i + u_1 (x_i - x_{i_{j_r}}) + u_2 (x_i - x_{i_{k_r}}) := \tilde{\Phi}_{T_{i,r}}(u_1,u_2),
\]
which is effectively Eq (3) above. From this identity, it is clear that,
\[
\underbrace{[{\bf t}^{(1)}_1 {\bf t}^{(1)}_2]^\top}_{={\bf T}^\top} (y-x_i) = u_1 \vec{v}_{i_{j_r}}+u_2 \vec{v}_{i_{k_r}} = \Phi_{T_{i,r}}(u_1,u_2).
\]
Then
\BEA
y - x_i &=& {\bf T}{\bf T}^\top(y-x_i) + {{\bf t}^{(i)}_3}{{\bf t}^{(i)}_3}^\top(y-x_i) \notag,
\EEA
so we can obtain the relation between $\tilde{\Phi}$ in terms of $\Phi$,
\BEA 
\tilde{\Phi}_{T_{i,r}}(u_1,u_2) &=& {\bf T} \Phi_{T_{i,r}}(u_1,u_2) + {{\bf t}^{(i)}_3}{{\bf t}^{(i)}_3}^\top(y-x_i)  +x_i. \label{triangle-to-ambient-map2}
\EEA
In this case, the 'lifted' triangle can be understood as a linear interpolation in the normal component given by,
\[
p_{i,r} (u_1,u_2) = {{\bf t}^{(i)}_3}^\top(y-x_i)  = u_1 {{\bf t}^{(i)}_3}^\top(x_i-x_{i_{j_r}}) + u_2 {{\bf t}^{(i)}_3}^\top(x_i-x_{i_{k_r}}), 
\]
where $p_{i,r}: T \to \BR^{n-d}$ is a linear function.

The coordinates corresponding to the map $\tilde{\Phi}_{T_{i,r}}$ is then given by,
$$
\left\{ \partial_{u_1} = x_{i_{j_r}} - x_i, \partial_{u_2} = x_{i_{k_r}} - x_i \right\}
$$ 
in which the Riemannian metric takes the form
\BEA
g_{\tilde{\Phi}_{T_{i,r}}}(u_1, u_2) = \begin{bmatrix} 
\|x_{i_{j_r}} - x_i\|^2 & (x_{i_{j_r}} - x_i) \cdot (x_{i_{k_r}} - x_i) \\
(x_{i_{k_r}} - x_i) \cdot (x_{j_{k_r}} - x_i) &  \| x_{i_{k_r}} - x_i \|^2 
\end{bmatrix}. \label{lmm-metric-equation}
\EEA
We emphasize that in this formulation, the Riemannian metric is constant as a matrix valued function of $(u_1, u_2).$ The work in \cite{lai2013local} leverages these coordinate representations, along with local coordinate invariant formulas, to approximate other geometric objects, such as the Laplace-Beltrami operator. 
\begin{remark}
\label{local-mesh-issues-remark}
Since the above metric formula is constant in $(u_1,u_2)$, it is clear that operator defines on the above local mesh parameterization cannot distinguish operators that are differed by higher order objects (e.g., the Hodge and Bochner Laplacians are differed by Ricci curvature). This issue, which arises from the trivial Cristoffel symbols in this linear mesh parameterization, motivates the use of curving (non-flat) local meshes as discussed in Section \ref{curved-mesh-metric}. In Section \ref{weak-operator-learning}, we will demonstrate how such a general (curving) local mesh framework can be used to learn operators on general tensor fields. 
\end{remark}
\subsection{Generalized moving least squares approximation}
\label{moving-least-squares-section}
Without loss of generality, we set $d=2$ and $n=3$ to simplify the discussion. Details for extending these ideas to the more general setting can be found in \cite{liang2013solving}. Given a point $x_i$, consider the basis $\big\{ \mathbf{t}^{(i)}_1, \mathbf{t}^{(i)}_2, \mathbf{t}^{(i)}_3 \big\}$, where $\big\{\mathbf{t}^{(i)}_1, \mathbf{t}^{(i)}_2\big\}$ is a basis for the approximated tangent space $T_{x_i}M$, and $\mathbf{t}^{(i)}_3$ approximates the normal direction. Recall that the $k$ nearest neighbors $K(i)$, and the neighbors projected onto the tangent space $T(i)$. Denote by $N(i)$ the normal components:
$$
N(i) := \left\{ \mathbf{t}^{(i)}_3 \cdot (x_{i_1} - x_{i_1}) , \dots, \mathbf{t}^{(i)}_3 \cdot (x_{i_k} - x_{i_1}) \right\}.
$$
Consider a polynomial $p_i: \overline{T_{x_i} M} \to \mathbb{R}^{n-d}$ defined by 
\BEA
\label{local-polynomial}
p_i(v_1,v_2) = av_1^2 + bv_2^2 + cv_1v_2 + dv_1 + ev_2 + f,
\EEA 
where the coefficients are obtained by a weighted least squares fit to the data in $T(i)$ with labels given by $N(i)$. In particular, the coefficients are obtained via 
\BEA
\label{local-polynomial-minimization}
(a,b,c,d,e,f) = \textup{argmin} \sum_{j=1}^k \omega(\| x_i - x_{i_j} \|) \left(p_i( \vec{v}_{i_j} ) -  \mathbf{t}^{(i)}_3 \cdot (x_{i_j}-x_{i_1})\right)^2 
\EEA
for some weight function $\omega$, where $\vec{v}_{i_j}$ is as defined in Equation \eqref{approximate-tangent-data-equation}. Following \cite{liang2012geometric,liang2013solving}, we will consider only the weight function
$$
\omega(t) = \begin{cases} 1 \qquad t=0 \\
1/k \qquad \textup{else}, 
\end{cases}
$$
which is empirically found to be adequate for random data. After fitting such a polynomial, we obtain a local parameterization of the form 
\BEA 
\alpha_{x_i}: (v_1, v_2) \mapsto (v_1,v_2, p_i(v_1,v_2)).
\label{mls-parameterization-equation}
\EEA
This parameterization induces the following coordinates: 
$$
\left\{ \partial_{v_1} = 
\begin{bmatrix}
    1 \\
    0 \\
    2av_1 + cv_2 + d
\end{bmatrix} 
\partial_{v_2} = 
\begin{bmatrix}
    0 \\
    1 \\
    2bv_2 + cv_1 + e
\end{bmatrix}
\right\}. 
$$ 
such that the Riemannian metric in these coordinates takes the form, 
$$
g_{x_i}(v_1, v_2) = \begin{bmatrix}
1 + (2av_1 + cv_2 + d)^2 & (2av_1 + cv_2 + d)(2bv_2 + cv_1 + e) \\
(2av_1 + cv_2 + d)(2bv_2 + cv_1 + e) & 1 + (2bv_2 + cv_1 + e)^2
\end{bmatrix}.
$$
We remark that while the domain of the parameterization from the previous subsection is a standard simplex, the domain of the parameterization from this subsection is the estimated tangent space itself. 
\subsection{Approximation of metric tensor with local curved meshes}
\label{curved-mesh-metric}
Local curved meshes conceptually combine the quadratic parameterization of the previous section with the local meshes of Section \ref{local-mesh-section}. As we shall see in Section \ref{weak-operator-learning}, this approximation allows the flexibility and robustness in using random data in the weak approximation of differential operators while simultaneously avoiding the issues mentioned in Remark \ref{local-mesh-issues-remark}. To this end, consider a fixed point $x_i$, and it's first ring $R(i)$. For an arbitrary triangle 
$$
T_{i,r} = [0, \vec{v}_{i_{j_r}}, \vec{v}_{i_{k_r}}] \in R(i),
$$
recall the map $\Phi_{T_{i,r}}$ as defined in Equation \ref{triangle-to-intrinsic-map}. The codomain of this map is $\overline{T_{x_i}M}$, which is precisely the domain of the parameterization $p_i$ from the previous section. Hence, it's natural to consider parameterizations $\tilde{\Phi}_{C_{i,r}}: T \to \mathbb{R}^n$ of the curved triangular patches of the locally approximated manifold, given by,
\BEA
\tilde{\Phi}_{C_{i,r}}(u_1,u_2) &=& {\bf T} \Phi_{T_{i,r}}(u_1,u_2) + {{\bf t}^{(i)}_3}p_{i,r}(u_1,u_2)  +x_i, \label{PhitildeC}
\EEA
 where we generalize \eqref{triangle-to-ambient-map2} by setting the vertical component $p_{i,r}: T\to \BR^n$ to be a nonlinear map,
 \[
 p_{i,r}(u_1,u_2) = p_i \circ \Phi_{T_{i,r}}(u_1,u_2),
 \]
 where $p_i$ is obtained by the generalized moving least-squares procedure discussed in Section~\ref{moving-least-squares-section}. The map $\tilde{\Phi}_{C_{i,r}}$ is effectively a generalization of $\tilde{\Phi}_{T_{i,r}}$, where we embed a curved (non-flat) mesh in $\BR^n$ using a nonlinear map $p_{i,r}$. 
 
Based on this map, one can construct the Riemannian metric $g_{\tilde{\Phi}_{C_{i,r}}}$ corresponding to the curved mesh, generalizing \eqref{lmm-metric-equation}. Since the computation will be done locally for each based point $x_i$, it is more convenient to implement the construction without mapping to the global coordinates with the following, 
\BEA
\label{triangle-to-curved-ambient-map}
\Phi_{C_{i,r}}(u_1, u_2) :=  \begin{pmatrix} 
\Phi_{T_{i,r}} (u_1, u_2) \\
p_i \circ \Phi_{T_{i,r}} (u_1,u_2)
\end{pmatrix}. 
\EEA
This results in coordinates 
\BEA 
\left\{ 
\partial_{u_1} = 
\begin{bmatrix} 
\vec{v}_{i_{j_r}} \\
 \nabla p_i (\Phi_{T_{i,r}}(u_1,u_2)) \cdot \vec{v}_{i_{j_r}}
\end{bmatrix}, 
\partial_{u_2} = 
\begin{bmatrix} 
\vec{v}_{i_{k_r}} \\
 \nabla p_i (\Phi_{T_{i,r}}(u_1,u_2)) \cdot \vec{v}_{i_{k_r}}
\end{bmatrix}
\label{cmm-basis-equation}
\right\}
\EEA 
In these coordinates, the Riemannian metric takes the form 
\BEA
g_{\Phi_{C_{i,r}}}(u_1,u_2) = 
\begin{bmatrix} 
\| \vec{v}_{i_{j_r}} \|^ 2 & \vec{v}_{i_{j_r}} \cdot \vec{v}_{i_{k_r}} \\
\vec{v}_{i_{j_r}} \cdot \vec{v}_{i_{k_r}} & \| \vec{v}_{i_{k_r}} \|^2
\end{bmatrix} + 
\begin{bmatrix} 
 \left( \nabla p_i (\Phi_{T_{i,r}}(u_1,u_2)) \cdot \vec{v}_{i_{j_r}} \right)^2 &  c^i_{j_rk_r}(u_1,u_2) \\
c^i_{j_rk_r}(u_1,u_2) &  \left( \nabla p_i (\Phi_{T_{i,r}}(u_1,u_2))  \cdot \vec{v}_{i_{k_r}} \right)^2 
\end{bmatrix} 
\label{cmm-metric-equation}
\EEA
where $c^i_{j_rk_r}(u_1,u_2) = \left( \nabla p_i (\Phi_{T_{i,r}}(u_1,u_2)) \cdot \vec{v}_{i_{j_r}} \right) \cdot\left( \nabla p_i (\Phi_{T_{i,r}}(u_1,u_2)) \cdot \vec{v}_{i_{k_r}} \right)$.

\section{Operator Approximation in Weak Form}
\label{weak-operator-learning}
The local mesh method of \cite{lai2013local}, and hence the curved mesh method described in Section \ref{curved-mesh-metric} lends itself to weak operator approximations, analogous to finite element frameworks. While the work in \cite{lai2013local} outlines such approach in approximating the Laplace-Beltrami operator (on functions), the same framework can be generalized to differential operators on tensor fields. For instance, this framework can be used to estimate the Laplacian on vector fields. In this section, we outline this process. We begin by describing how the Laplace-Beltrami operator can be estimated, pointing out the key differences between \cite{lai2013local} and the curved mesh method. We then generalize this, in the case of the curved mesh method, to arbitrary operators on tensor fields. Since operators on vector fields, such as the Bochner and Hodge Laplacians, are of particular interest, we supply additional details for these cases. 

\subsection{Weak approximation of the Laplace-Beltrami operator}\label{WeakLB}
The weak form Laplace-Beltrami eigenvalue problem is to find eigenpairs $\lambda$ and $f\in H^1(M)$ that solves, 
\BEA
\int_M \langle \nabla f, \nabla h \rangle_g d\textup{Vol}= \lambda \int_M f h d\textup{Vol},\label{LBprob}
\EEA
for all $h\in H^1(M)$.

Following standard finite-element approach, consider a set of ``hat'' functions, $V_N = \left\{f = \sum_{i=1}^N f(x_i) e_i,\,\, e_i : M \to \mathbb{R} \right\} \subset H^1(M)$, where $\{e_i\}$ are nodal basis satisfying the property $e_i(x_j) = \delta_{ij}.$ Specifically, if the true local parameterization $\gamma_{x_i}: \mathbf{R}(i) \to \mathbb{R}^3$ is denoted by,
\BEA 
\gamma_{x_i}: (v_1, v_2) \mapsto x_i + (v_1, v_2, z_{x_i}(v_1,v_2)), \label{analytic-local-parameterization}
\EEA
using the canonical map $\Phi_{T_{i,r}}: T \to T_{i,r} $ as defined in \eqref{triangle-to-intrinsic-map}, one can define the nodal basis as a hat function given as,
\[
e_i \circ \gamma_{x_i} \circ \Phi_{T_{i,r}} (u_1,u_2) = 1-u_1-u_2. 
\]
Define a parameterization $\Upsilon_{i,r} := \gamma_{x_i} \circ \Phi_{T_{i,r}}: T \to \mathbb{R}^3$. For each pair $\{x_i, x_j\}$ consider the quantity 
\BEA
\label{function-stiffness-matrix-equation}
\int_M \langle \nabla e_i, \nabla e_j \rangle_g d\text{Vol} = \sum_{ T_{i,r} \in R(i) } \int_{T} \langle \nabla_{\Upsilon_{i,r}} e_i , \nabla_{\Upsilon_{i,r}} e_j \rangle_{g_{\Upsilon_{i,r}}} \sqrt{\textup{det}(g_{\Upsilon_{i,r}})} du_1 du_2 := \text{S}_{ij},
\EEA
where $\nabla_{\Upsilon_{i,r}}$ and $g_{\Upsilon_{i,r}}$ denote the gradient and Riemannian metric, respectively, with each computed in the coordinates of the parameterization $\Upsilon_{i,r}: T \to \mathbb{R}^n$. 

When the manifold is unknown as in our case, $\gamma_{i,r}$ is not available. If we consider approximating the linear mesh with parameterization $\tilde{\Phi}_{T_{i,r}}$ as defined in \eqref{triangle-to-ambient-map}, then, the hat function $e_i$ is defined not exactly on $M$, but on the linear triangulation $\tilde{\Phi}_{T_{i,r}}(T)$ that locally approximates $M$. In such a case, the function space $V_N \approx \widehat{V}_N = \{f = \sum_{i=1}^N f(x_i) e_i, e_i : \tilde{\Phi}_{T_{i,r}}(T) \to \mathbb{R}\}$. Then,
\[
S_{ij} \approx \widehat{S}_{ij}: = \sum_{ T_{i,r} \in R(i) } \int_{T} \langle \nabla_{\tilde{\Phi}_{T_{i,r}}} e_i , \nabla_{\tilde{\Phi}_{T_{i,r}}} e_j \rangle_{g_{\tilde{\Phi}_{T_{i,r}}}} \sqrt{\textup{det}(g_{\tilde{\Phi}_{T_{i,r}}})} du_1 du_2.
\]
The only nonzero terms in the above sum are when $i=j$ or when $\overline{x_i x_j}$ is a side in the lifted triangulation of $T(i)$. Such integrals can be computed analytically. In fact, when hat functions are used, this is formula results in the famous cotangent formula for the Laplace-Beltrami operator \cite{duffin1959distributed}. Denote by $\widehat{\mathbf{S}}$ the $N \times N$ sparse matrix with $(i,j)$-th entry $(\mathbf{\widehat{S}})_{ij} = {\widehat{S}}_{ij}$. Similarly, we can approximate  
\BEA
\label{function-mass-matrix-equation}
\int_M e_i e_j d\textup{Vol} \approx \widehat{M}_{ij} := \sum_{T_{i,r} \in R(i)} \int_{T} e_i(u_1,u_2) e_j(u_1,u_2) \sqrt{\det(g_{\tilde{\Phi}_{T_{i,r}}}) } du_1 du_2,  \label{massmatrix}
\EEA
where the approximation occurs when we choose $e_i \in \widehat{V}_N$ in $\widehat{M}_{ij}$.

By definition, $\widehat{\mathbf{S}}$ and $\widehat{\mathbf{M}}$ are not necessarily symmetric.  But since the left-hand-term in \eqref{function-stiffness-matrix-equation} and \eqref{massmatrix} are symmetric assuming we have access to the underlying metric $g$, we can approximate the stiffness and mass matrices by
$
\mathbf{\widehat{A}} = \frac{1}{2}(\mathbf{\widehat{S}} + \mathbf{\widehat{S}}^\top),
$ 
and $\mathbf{\widehat{B}} = \frac{1}{2}(\mathbf{\widehat{M}} + \mathbf{\widehat{M}}^\top)$, following \cite{lai2013local}. For convenience of discussion, we also define bilinear operators corresponding to these matrices,
\[
a(e_i,e_j) = (\mathbf{\widehat{A}})_{ij}, \quad b(e_i,e_j) =  (\mathbf{\widehat{B}})_{ij}.
\]

Numercally, we approximate the weak form Laplace-Beltrami eigenvalue problem in \eqref{LBprob}, by finding  $\lambda$ and $f \in \widehat{V}_N$, such that,
\[
a(f,h)  = \lambda b(f,h),   
\]
for any $h\in \widehat{V}_N$, which is analogous to solving the following eigenvalue problem,
$$
\left\langle \mathbf{\widehat{A}} \mathbf{f} , \mathbf{h} \right\rangle = \lambda \left\langle \mathbf{\widehat{B}} \mathbf{f}, \mathbf{h} \right\rangle,
$$
where $\mathbf{f} = ({f}_1, \dots, {f}_N)^\top, \mathbf{h} = ({h}_1, \dots, {h}_N)^\top$,  and $\langle \cdot,\cdot \rangle$ denotes the Euclidean dot product (which will be used accordingly for the remainder of this paper). The eigenpair  ($\lambda,\mathbf{f}$) that solves the generalized eigenvalue problem $\mathbf{\widehat{A}} \mathbf{f} = {\lambda} \mathbf{\widehat{B}} \mathbf{f}$ is an estimate to the eigensolutions of the Laplace-Beltrami operator. Once $\mathbf{f}$ is estimated, the corresponding estimated eigenfunction is given by ${f} = \sum_{i=1}^N {f}_ie_i \in \widehat{V}_N$ that can be used on new (out-of-sample) data points.  

\begin{remark}
In \cite{lai2013local}, the diagonal elements of ${\bf \widehat{S}}$ are chosen in terms of the off-diagonals to ensure that the resulting problem has non-negative eigenvalues, as well as a trivial eigenvalue with corresponding eigenvector being constant. Since we aim to estimate operators that do not necessarily have such properties, we did not employ such an adjustment in our formulation and implementation. Moreover, explicit simplication of the integrals \eqref{function-mass-matrix-equation} and \eqref{function-stiffness-matrix-equation} can be found in \cite{lai2013local} in the case of flat local meshes with the parameterization $\tilde{\Phi}_{T_{i,r}}$ in \eqref{triangle-to-ambient-map}. We approximate the integral in the stiffness and mass matrices, $\bf{\widehat{S}}$ and $\bf{\widehat{M}}$, with a first-order quadrature based on evaluating the nodes of the triangle $T$.
\end{remark} 

\subsection{Generalization to operators on arbitrary tensor fields} 
\label{generation-subsection}

In this section, we will generalize the framework from the previous subsection to find eigenvalue $\lambda$ and eigenvector field $W \in H^1(TM)$ that solves,
\BEA
\left \langle DW, DV \right\rangle_{U} = \lambda \langle W, V \rangle_{T^{(1,0)}M},\label{Bocheigprob}
\EEA 
for all $V \in H^1(TM)$. Here the left-hand inner product is defined for appropriate range space of $D: H^1(TM) \to U$. For Bochner Laplacian, $D:=\text{grad}_g:  H^1(TM) \to U = T^{(2,0)}M$ is gradient of vector field. For Hodge Laplacian, $D: H^1(TM) \to U = T^{(2,0)}M \bigoplus C^{\infty}(M)$ defined as $D=(\nabla_g, d^\star\flat)$, where the first component is the gradient of vector field and $d^\star$ in the second component denotes the codifferential of the exterior derivative, $d$, acting to 1-form, defined as, $d^\star = (-1)^{d-1} \text{sign}(g) \star d \star$, where $\star:\Omega^k(M) \to \Omega^{d-k}(M)$ is the standard Hodge star operator,  
and the musical notation $\flat$ denotes the ``lowering'' operation that takes vector field to its  covector field (1-form in our case).

For clarity, we present the case of $d=2$. However, this setup can be easily extended to estimate operators on arbitrary tensor fields over manifolds of higher dimension. First, we introduce a finite element space. A natural choice is to consider, 
\BEA
\mathcal{W}_N = \left\{ W = \sum_{i=1}^N\sum_{\ell=1}^d W^{(i)}_\ell e_i \mathbf{t}^{(i)}_\ell,\, e_i: M \to \BR\right\} \subset H^1(TM), \label{WN}
\EEA
in analogous $V_N$ in previous section.
where the orthonormal basis $\{\mathbf{t}_{\ell}^{(i)}\}$ of local tangent space at $x_i$ are defined in Section~\ref{local-tangent-space-subsection}. Here, the coefficients are given as, ${W}_\ell^{(i)} = (\mathbf{t}^{(i)}_\ell)^\top {W}(x_i)$. 

To facilitate the analysis, we consider an intermediate step, where we approximate the left-and right hand-sides of \eqref{Bocheigprob} with symmetric bilinear forms, respectively, 
\BEA
a(W_i,W_j) &=& \frac{1}{2}\left( \left \langle D W_i, D W_j \right\rangle_{U} +  \left \langle DW_j, D W_i \right\rangle_{U} \right), \\
b(W_i,W_j) &=& \frac{1}{2}\left( \left \langle W_i, W_j \right\rangle_{T^{(1,0)}M} +   \left \langle W_i, W_j \right\rangle_{T^{(1,0)}M} \right),   
\EEA
for $W_1,W_j \in \mathcal{W}_N$, where each of the inner products in the right-hand-terms 
may not be symmetric when they are realized by a local parameterization, $\Upsilon_{i,r}$, that is also used in \eqref{function-stiffness-matrix-equation}. Theoretically, for any $V,W\in H^1(TM)$ and the underlying Riemannian metric $g$ that is unknown in our case, $a(W,V) = \langle DW, DV \rangle_U$ and $b(W,V) = \langle W,V \rangle_{T^{(1,0)}M}$. 

Our goal is to approximate \eqref{Bocheigprob} by finding $\lambda$ and $W \in \mathcal{W}_N$ that satisfy,
\BEA
a(W,V) = \lambda b(W,V), \label{variationalBochnereigvalprob}
\EEA
for all $V\in \mathcal{W}_N$, which is equivalent to solving the generalized eigenvalue problem,
\BEA
\mathbf{A} \mathbf{W} = \lambda \mathbf{B} \mathbf{W},\label{gen-eigvalprob}
\EEA
where $\lambda$ denotes the eigenvalue corresponding to eigenvector field $\mathbf{W} \in \BR^{Nd}$ with entries given by 
\BEA 
\begin{pmatrix}
   (\mathbf{W})_{2i+1} \\
    (\mathbf{W})_{2i+2}
\end{pmatrix} =  {\color{black} \begin{pmatrix}
    (\mathbf{t}^{(i)}_1)^\top W(x_i) \\
     (\mathbf{t}^{(i)}_2)^\top W(x_i)
\end{pmatrix}} = \begin{pmatrix}W_1^{(i)} \\ W_2^{(i)}\end{pmatrix},
\label{vecW}
\EEA
and $\mathbf{A} = \frac{1}{2}(\mathbf{S}+ \mathbf{S}^\top)$ and $\mathbf{B} = \frac{1}{2}(\mathbf{M}+ \mathbf{M}^\top)$ are the symmetrization of the stiffness and mass matrices $\mathbf{A}$ and $\mathbf{B}$, respectively. 

In this case, the mass and stiffness matrices are sparse, $dN \times dN$ matrices. It is convenient to define these matrices in terms of their $d \times d$ block submatrices. For $d=2$, the mass matrix $\mathbf{M}$ has $2\times 2$ block entries given by, 
\BEA
\begin{bmatrix} [\mathbf{M}]_{2i-1, 2j -1} & [\mathbf{M}]_{2i-1,2j} \\ [\mathbf{M}]_{2i, 2j-1} & [\mathbf{M}]_{2i,2j} \end{bmatrix} = \sum_{T_{i,r} \in R(i)} \int_{T}  e_i e_j
\begin{bmatrix} 
 \langle \mathbf{t}^{(i)}_{1}, \mathbf{t}^{(j)}_{1}   \rangle_{g_{\Upsilon_{i,r}}}  &  \langle \mathbf{t}^{(i)}_{1}, \mathbf{t}^{(j)}_{2}   \rangle_{g_{\Upsilon_{i,r}}} \\
 \langle \mathbf{t}^{(i)}_{2}, \mathbf{t}^{(j)}_{1}   \rangle_{g_{\Upsilon_{i,r}}} &  \langle \mathbf{t}^{(i)}_{2}, \mathbf{t}^{(j)}_{2}   \rangle_{g_{\Upsilon_{i,r}}}
\end{bmatrix} \sqrt{\det({g_{\Upsilon_{i,r}}})} du_1 du_2,\label{Massmatrix}
\EEA
where the integrals above are taken entrywise, and the inner-products are Riemannian inner-products between vector fields. The stiffness matrix $\mathbf{S}$ has entries, 
\BEA
\begin{bmatrix} [\mathbf{S}]_{2i-1, 2j -1} & [\mathbf{S}]_{2i-1,2j} \\ [\mathbf{S}]_{2i, 2j-1} & [\mathbf{S}]_{2i,2j} \end{bmatrix} =
\sum_{T_{i,r} \in R(i)} \displaystyle\int_{T}  
\begin{bmatrix} 
 \langle D e_i \mathbf{t}^{(i)}_{1}, D e_j \mathbf{t}^{(j)}_{1}   \rangle_{g_{\Upsilon_{i,r}}}  &  \langle D e_i \mathbf{t}^{(i)}_{1}, 
 D e_j \mathbf{t}^{(j)}_{2}   \rangle_{g_{\Upsilon_{i,r}}} \\
 \langle  D e_i \mathbf{t}^{(i)}_{2}, D e_j \mathbf{t}^{(j)}_{1}   \rangle_{g_{\Upsilon_{i,r}}} &  \langle D e_i \mathbf{t}^{(i)}_{2}, D e_j \mathbf{t}^{(j)}_{2}   \rangle_{g_{\Upsilon_{i,r}}}
\end{bmatrix} \sqrt{\det({g_{\Upsilon_{i,r}}}}) du_1 du_2,\label{Siffnessmatrix}
\EEA
where the Riemannian inner product operation in each component will be defined according to the codomain of $D$.
In the case of Bochner Laplacian, the Riemannian inner product is over $(2,0)$ tensor field, whereas for the Hodge Laplacian, it will be defined over $(2,0)$ tensor fields and functions for the first and second components of the co-domain, respectively.

We should again point out that the formulation above is only useful when the true local parameterization $\Upsilon_{i,r}$ is known. In our case, we consider using the local curved mesh parameterization $\tilde{\Phi}_{C_{i,r}}$ as defined in \eqref{PhitildeC}. We also note that we choose to employ such a parameterization over the flat local mesh parameterization $\tilde{\Phi}_{T_{i,r}}$ used in previous section since the latter introduces significant errors in the stiffness matrix calculation with zero Christoffel symbols.  

With this parameterization, we effectively solve \eqref{variationalBochnereigvalprob} on the following space,
\[
\widehat{\mathcal{W}}_N = \left\{ W = \sum_{i=1}^N\sum_{\ell=1}^d W^{(i)}_\ell e_i \mathbf{t}^{(i)}_\ell,\, e_i: \tilde{\Phi}_{C_{i,r}}(T) \to \BR\right\},
\]
which differs from $\mathcal{W}_N$ on the parameterized domain of $e_i$. We will denote the resulting eigenvalue problem,
\BEA
\mathbf{\widehat{A}} \mathbf{W} = \lambda \mathbf{\widehat{B}} \mathbf{W},\label{gen-eigvalprob2}
\EEA
where $\mathbf{W}$ is defined as in \eqref{vecW} except that $W \in \widehat{W}_N$. Also, the symmetrized stiffness and mass matrices $\mathbf{\widehat{A}}$ and $\mathbf{\widehat{B}}$ are defined exactly as $\mathbf{{A}}$ and $\mathbf{{B}}$, respectively, where the local curved mesh parameterization $\tilde{\Phi}_{C_{i,r}}$ is used in placed of $\Upsilon_{i,r}$ in \eqref{Massmatrix} and \eqref{Siffnessmatrix}.


Detailed computations for the mass matrix for vector fields, as well as for the stiffness matrices for the Bochner and Hodge Laplacians in the case of the local curved mesh method are provided in the Appendices \ref{mass-matrix-computation-appendix}, \ref{bochner-laplacian-computation-appendix}, and \ref{hodge-laplacian-computation-appedix}, respectively. Remaining details for computing the mass matrix are summarized in the following remark.

\begin{remark}
\label{parallel_transport_remark}
A subtlety in the above block formula is in computing $\left\langle \mathbf{t}^{(i)}_{k_1}, \mathbf{t}^{(j)}_{k_2} \right\rangle_{g_{\Phi_{C_{i,r}}}}$ since the local parameterizations of these pair of tangent vectors are different. Particularly, $\mathbf{t}^{(i)}_{k_1}$, is constructed at base point $x_i$, whereas the latter, $\mathbf{t}^{(j)}_{k_1}$ is constructed at base point $x_j$, we need to represent these two vectors with the same coordinates to have a well defined inner product. To accomplish this, we first represent $\mathbf{t}^{(j)}_{k_2}$ in terms of $\{\mathbf{t}^{(i)}_{1}, \dots, \mathbf{t}^{(i)}_n\}$: 
$$
\mathbf{t}^{(j)}_{k_2} = \sum_{k=1}^n b_k \mathbf{t}^{(i)}_{k} 
$$
Let $\vec{b} = (b_1, \dots, b_n)^\top$. Similarly, let $\vec{a}$ denote a vector of all zeros with a one in the $k_1$-th position, which is the analogous representation for $\mathbf{t}^{(i)}_{k_1}$. We then regress the above vectors onto the natural basis for the tangent space corresponding to the parameterization, given by $\{ \partial_{u_1} \Phi_{C_{i,r}}, \partial_{u_2}\Phi_{C_{i,r}} \}$ to obtain coefficient vector $\mathbf{a}^{ii}_{k_1}, \mathbf{a}^{ij}_{k_2}$ satisfying
\BEA
\begin{aligned}\label{LSprob}
\vec{a} &=& (\mathbf{a}^{ii}_{k_1})_1 \partial_{u_1} \Phi_{C_{i,r}} + (\mathbf{a}^{ii}_{k_1})_2 \partial_{u_2} \Phi_{C_{i,r}}  \\
\vec{b}  &=& (\mathbf{a}^{ij}_{k_2})_1 \partial_{u_1} \Phi_{C_{i,r}} + (\mathbf{a}^{ij}_{k_2})_2 \partial_{u_2} \Phi_{C_{i,r}}. 
\end{aligned}
\EEA
The two linear problems above can be written as follows,
\BEA
\mathbf{T}^\top \mathbf{t}^{(i)}_{k_1} &=&  \mathbf{R}_{i,r}  \mathbf{a}_k^{ii} \notag \\
\mathbf{T}^\top \mathbf{t}^{(j)}_{k_2} &=& \mathbf{R}_{i,r}  \mathbf{a}_k^{ij},  \notag
\EEA
where $\mathbf{T} = [\mathbf{t}^{(i)}_{1}, \dots, \mathbf{t}^{(i)}_n ] \in \BR^{n\times n}$ and 
$\mathbf{R}_{i,r}=[\partial_{u_1}, \partial_{u_2}] \in \BR^{n\times d}$ whose columns form a tangent space for the local curved mesh of triangle $T_{i,r}$, as defined in \eqref{cmm-basis-equation}. The regression solutions of these two problems are given by,
\BEA  
\mathbf{a}_k^{i\ell} = ( \mathbf{R}_{i,r}^\top \mathbf{R}_{i,r})^{-1}\mathbf{R}_{i,r}^\top \mathbf{T}^\top \mathbf{t}^{(\ell)}_k, \notag
\EEA
for both $\ell = i$ and $\ell= j$.

Since $\mathbf{a}_k^{ii}$ and $\mathbf{a}_k^{ij}$ are the components of vectors $ \mathbf{t}^{(i)}_{k_1}$ and $ \mathbf{t}^{(i)}_{k_2}$, respectively, under the same coordinates $\{\partial_{u_1}, \partial_{u_2}\}$, then it is clear that
\BEA 
\label{cmm-vector-field-innerproduct}
\langle \mathbf{t}^{(i)}_{k_1} , \mathbf{t}^{(j)}_{k_2} \rangle_{g_{\Phi_{C_{i,r}}}}  = (\mathbf{a}^{ii}_{k_1})^\top g_{\Phi_{C_{i,r}}} \mathbf{a}^{ij}_{k_2}.
\EEA 

\end{remark}

\section{Convergence of Eigenvalues}
\label{theoretical-guarantees-section}

\comment{\color{red} 
Roadmap of the proof: With the notation above, I think part of 4.1 and 4.2 below will be needed to proof the following.

Suppose that $\lambda_\ell$ and  $\lambda_{\ell,N}$ denote spectra of \eqref{Bocheigprob} and \eqref{variationalBochnereigvalprob}, respectively. Then I would like to use standard technique in FEM to prove,
\[
\lambda_{\ell} \leq \lambda_{\ell,N} \leq \lambda_{\ell} + O(h_{X,M}).
\]

Finally, if we denote $\widehat{\lambda}_{\ell,N}$ as the corresponding spectrum of \eqref{gen-eigvalprob2}, then we can use perturbation theory of matrices to deduce something like, 
\[
|\widehat{\lambda}_{\ell,N} -\lambda_{\ell,N} | \leq C_1 \|\mathbf{A}- \widehat{\mathbf{A}}\| + C_2\|\mathbf{B}- \widehat{\mathbf{B}}\|
\] 

}

In this section, we study the convergence of eigenvalues of a Bochner Laplacian discretization using the local curved mesh method, obtained from the generalized eigenvalue problem \eqref{gen-eigvalprob2}, to the true eigenvalues of the Bochner Laplacian \eqref{Bocheigprob}. 

\comment{ The proof hinges on weak consistency results of the form 
\BEA 
\left| \langle \nabla W, \nabla W \rangle_{L^2(T^{(2,0}M)} -  \langle \mathbf{D}^{-1} \mathbf{S} \mathbf{W}, \mathbf{W} \rangle \right| &\longrightarrow& 0 \textup{ as } N \to \infty, \label{weak-convergence-equation} \\
\left|\langle W, W \rangle_{L^2(T^{(1,0)}M)} -   \langle \mathbf{D}^{-1} \mathbf{M} \mathbf{W}, \mathbf{W} \rangle \right| &\longrightarrow& 0 \textup{ as } N \to \infty, 
\label{weak-convergence-equation-mass}
\EEA
where $\nabla$ denotes the gradient on \todo{should be the estimated vector fields $\widehat{W}$, while components of $\mathbf{D}$ are approximating $Vol(R(i))$.} vector fields, $W$ denotes a vector-field, $\mathbf{W}$ denotes the vector field $W$ evaluated on $X$ as defined in \eqref{vecW}, and $\mathbf{D}$ denotes a diagonal normalization matrix which is defined in Section \ref{weak-convergence-subsection}. With such a result in hand, the proof follows established arguments which leverage the min-max formula for the $\ell$-th eigenvalue of the Bochner Laplacian:
\BEA
\lambda_\ell = \min_{S \in \mathcal{G}_\ell} \max_{W \in S}  \frac{\langle \nabla W, \nabla W \rangle_{L^2(T^{(2,0}M)}}{\langle W, W \rangle_{L^2(T^{(1,0)}M)}}
\EEA 
where $\mathcal{G}_\ell$ denotes all smooth subspaces of vector-fields of dimension $\ell$. We remark that the results of \eqref{weak-convergence-equation}, \eqref{weak-convergence-equation-mass} are sufficient to prove the convergence of eigenvalues of \eqref{gen-eigvalprob} due to the fact that the weak form $$\lambda_\ell^N = \min_{\mathbb{S}_\ell \subset \mathbb{R}^{Nd} }\max_{\mathbf{W} \in \mathbb{S}_\ell}   \frac{ \langle \mathbf{D}^{-1} \mathbf{S} \mathbf{W}, \mathbf{W} \rangle }{\langle \mathbf{D}^{-1} \mathbf{M} \mathbf{W}, \mathbf{W} \rangle } = \min_{\mathbb{S}_\ell \subset \mathbb{R}^{Nd} }\max_{W \in S_\ell}   \frac{ \langle \mathbf{D}^{-1} \mathbf{S} \mathbf{W}, \mathbf{W} \rangle }{\langle \mathbf{D}^{-1} \mathbf{M} \mathbf{W}, \mathbf{W} \rangle }$$ \todo{The first min max is not valid since $\mathbf{S}$ is not symmetric.}
coincides with $\min_{\mathbb{S}_\ell \subset \BR^{Nd} }\max_{W \in S_\ell}   \frac{ \langle \mathbf{A} \mathbf{W}, \mathbf{W} \rangle }{\langle \mathbf{B} \mathbf{W}, \mathbf{W} \rangle }$. While the first equality above is the standard min-max theory, the second equality is due to the fact that $\max_{W\in S_\ell}$ is equivalent to taking a maximum over $\mathbf{W} = \left\{W^{(i)}_j\right\} $ in an $\ell-$dimensional subspace $\mathbb{S}_\ell \subset \mathbb{R}^{Nd}$ induced by restricting the corresponding continuous vector field $W\neq 0$ as defined in \eqref{vecW} on the training data set. }

In Section \ref{metric-error-quantification-subsection}, we introduce some geometric preliminaries and define local charts which serve as the analytic object that is estimated by the local curved mesh method. We also state convergence results that follow from standard GMLS results adapted to our setup. In Section~\ref{variational-subsection}, we will discuss the spectral error due to variational approximation on $\mathcal{W}_N$. The main result in this section is the spectral consistency of \eqref{gen-eigvalprob} to \eqref{Bocheigprob} stated in Lemma~\ref{spectralerrorvar}. In Section \ref{local-vector-field-subsection}, we quantify the spectral error between \eqref{gen-eigvalprob} and \eqref{gen-eigvalprob2} induced by the approximation of the local chart, formalized in Lemma~\ref{spectralerrorgeneigval}. In Section~\ref{main-subsection}, we state the main theorem based on Lemmas~\ref{spectralerrorvar} and \ref{spectralerrorgeneigval} with some discussion on the spectral gap condition for convergence.


Throughout this section, we let $X=\{x_1, \dots, x_N \}$ denote a dataset of $N$ points sampled uniformly from a manifold $M$. We remark that, for simplicity, we assume that the estimated tangent space $\overline{T_{x_i}M}$ is exact for each point $i$. In practice, high-order approximations of the tangent space can be obtained, and other error terms investigated in this analysis dominate. As in previous sections, we assume that the manifold is of dimension $d=2$ embedded in $\mathbb{R}^3$ to simplify computations. However, generalizing the computations of this section to higher dimensional manifolds of higher codimension is straightforward. 

\subsection{Local metric approximation}\label{metric-error-quantification-subsection} Fix a point $x_i$ and consider it's $k$-nearest neighbors $K(i) = \{x_{i_1}, \dots, x_{i_k}\}$, as well as $T(i)$, which is $K(i)$ projected to $T_{x_i}M$. Let $r_i = \max_{\vec{v}_j \in T(i)} \| \vec{v}_j \|$. In the previous equation, and the remainder of this paper, $\|\cdot\|$ denotes the Euclidean norm (specifically, for vector in $\BR^d$ in the above). We remark that $k$, the number of nearest neighbors, which depends on $N$, is chosen to scale sufficiently slowly so that $r_i \to 0$ as $N\to \infty$. To this end, we assume that $N$ is sufficiently large so that $r_i \leq r_M$, where $r_M$ denotes the injectivity radius of the manifold. 

To facilitate the analysis below, we define the local fill and separation distances, respectively:
\[
h_i := \sup_{\vec{v}\in B_{\vec{0}}(r_i)} \min_{\vec{v}_{j}\in T(i)} \|\vec{v}-\vec{v}_{j}\|,\quad \quad
q_i := \frac{1}{2} \min_{\vec{v}_j,\vec{v}_k\in T(i), i\neq j} \|\vec{v}_j -\vec{v}_k \|.
\]
Consider also the global fill and separation distances, respectively:
\[
h_{X,M} := \sup_{x \in M} \min_{x_{j}\in X} \|x-x_{j}\|,\quad \quad
q_{X} := \frac{1}{2} \min_{x_i,x_j \in X, i\neq j} \|x_i -x_j \|. 
\]
We assume that the dataset $X$ is quasi-uniform in the sense that 
\BEA
\label{quasi-uniform-equation}
q_{X} \leq h_{X,M} \leq c_q q_{X},
\EEA
for some constant $c_q$. It is easy to see that the projected data is also quasi-uniform same sense ( i.e., that $q_i \leq h_i \leq c_{q_i} q_i$, for some constant $c_{q_i}>0$). Since the points $\vec{v} \in B_{\vec{0}}(r_i)$ correspond to projected points in $M$, one can verify that 
\BEA 
h_i \leq h_{X,M} \quad \textup{ for all } i \in \{1, \dots, N\}. \label{local-global-fill-distance-bound} 
\EEA

Let $ \gamma_{x_i}: B_{\vec{0}}(r_i) \to \mathbb{R}^n$ denote a local parameterization of the form
\BEA 
\gamma_{x_i}: (v_1, v_2) \mapsto x_i + (v_1, v_2, z_{x_i}(v_1,v_2)), \label{analytic-local-parameterization}
\EEA 
for some smooth map $z_{x_i}$, which is obtained by projecting points nearby $x_i$ to $T_{x_i}M$.

In this section, we quantify the error of estimating the coordinates induced by $\gamma_{x_i}$ with the coordinates corresponding to $\alpha_{x_i}$ of \eqref{mls-parameterization-equation}. Such a result follows from general approximation error results from the GMLS, which have been derived in \cite{mirzaei2012generalized}. We state their results in our context below.
\begin{lem}
\label{mls-approximation-error}
Let $X \subseteq M$ be such that $h_{X, M} \leq h_0$ for some constant $h_0 > 0$. Define $\Omega_i^* = \bigcup_{\vec{v} \in B_{\vec{0}}(r_i)} B_{\vec{v}}(C_2 h_0)$ for some constant $C_2 > 0$. Then for any $f \in C^{m+1}(\Omega_i^*)$ and a multi index $\ell$ that satisfies $|\ell| \leq m$, we have  
    \BEA
    |D^\ell f(\vec{v}) - D^\ell \hat{f} (\vec{v}) | \leq C h_i^{m+1-|\ell|} |f|_{C^{m+1}(\Omega_i^*)} \leq C h_{X,M}^{m+1-|\ell|} |f|_{C^{m+1}(\Omega_i^*)} \notag
    \EEA
    for any $\vec{v} \in B_{\vec{0}}(r_i)$, where $\hat{f}$ is a GMLS approximation to $f$ using polynomial of up to degree-$m$. Here, $D^\ell$ is a shorthand for multivariate derivative of order-$\ell$. 
\end{lem}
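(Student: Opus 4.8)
The plan is to obtain the statement directly from the standard GMLS sampling inequality established by Mirzaei, Schaback and Dehghan \cite{mirzaei2012generalized}; the substance of the proof is the verification that the hypotheses of that general result hold \emph{uniformly} over the base points $x_i$, together with tracking constants. No new estimate needs to be proved.

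First I would recall the form of the GMLS error bound we invoke: for a bounded set $\Omega \subset \mathbb{R}^d$ satisfying an interior cone condition, a discrete set $Y\subset\Omega$ whose fill distance $h$ is small relative to the cone parameters and whose separation distance is comparable to $h$, and a weight function bounded above and below by positive constants on the active stencils, the degree-$m$ moving-least-squares reconstruction $p_f$ of a function $f\in C^{m+1}$ obeys
\[
|D^\ell f(\vec v) - D^\ell p_f(\vec v)| \;\le\; C\, h^{\,m+1-|\ell|}\, |f|_{C^{m+1}(\tilde\Omega)}, \qquad |\ell|\le m,\ \vec v \in \Omega,
\]
where $\tilde\Omega$ is a fixed enlargement of $\Omega$ by a distance proportional to the stencil radius (itself $O(h)$), and $C$ depends only on $m$, $d$, the cone parameters, and the quasi-uniformity constant of $Y$. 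I would then instantiate $\Omega = B_{\vec 0}(R_i)$, which satisfies an interior cone condition with constants independent of $i$ (a ball does so trivially), take $Y = T(i)$, and note that for $h_0$ small the enlargement $\tilde\Omega$ is contained in $\Omega_i^* = \bigcup_{\vec v\in B_{\vec 0}(R_i)} B_{\vec v}(C_2 h_0)$ once $C_2$ exceeds the fixed proportionality constant in the definition of $\tilde\Omega$, using $h_i\le h_0$.

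Two local points then need checking. Quasi-uniformity of the stencil: the excerpt already notes that $T(i)$ inherits quasi-uniformity from the assumption \eqref{quasi-uniform-equation} on $X$, with $q_i\le h_i\le c_{q_i}q_i$ and $c_{q_i}$ uniform in $i$, which is exactly what makes $C$ independent of $i$. Fill distance: by \eqref{local-global-fill-distance-bound}, $h_i\le h_{X,M}\le h_0$, so the smallness hypothesis of the GMLS theorem holds simultaneously for all $i$ once $h_0$ is taken small enough, and its conclusion is precisely the first inequality of the lemma with $h=h_i$. The second inequality follows at once from $h_i \le h_{X,M}$ since $m+1-|\ell|\ge 1>0$.

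The main obstacle — more careful bookkeeping than a genuine difficulty — is ensuring the constant $C$ is uniform in both $i$ and $N$. This reduces to confirming that none of the geometric quantities feeding the GMLS estimate degenerate as $N\to\infty$: the cone parameters of a ball are scale-invariant, and the quasi-uniformity constant of $T(i)$ stays bounded because orthogonal projection onto $T_{x_i}M$ is a uniform near-isometry on $B_{x_i}(R_i)$ for $R_i\le r_M$, with distortion controlled by the second fundamental form of $M$, hence by a constant depending only on $M$. A secondary point is that the specific weight $\omega$, equal to $1$ at the base point and $1/k$ at the remaining neighbors, must meet the ``weight comparable to a constant'' hypothesis: on each stencil $\omega$ takes only the values $1$ and $1/k$, and since the GMLS normal equations are invariant under multiplying all weights by the common factor $k$, this is equivalent to an unweighted least-squares fit, for which the hypothesis is vacuous. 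With these items in place the lemma follows.
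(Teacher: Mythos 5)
Your proposal follows exactly the route the paper takes: the paper offers no proof of this lemma, stating only that it ``follows from general approximation error results from the GMLS, which have been derived in \cite{mirzaei2012generalized},'' and your argument is precisely the instantiation of that result on $\Omega = B_{\vec 0}(R_i)$ with $Y=T(i)$, plus the chain $h_i \le h_{X,M}$ from \eqref{local-global-fill-distance-bound}. Your verification of the uniformity of the constants (cone condition for balls, quasi-uniformity of the projected stencil, and the weight-rescaling observation) is correct and in fact supplies details the paper leaves implicit.
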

In our case, we estimate the $d=2$ dimensional smooth function $z_{x_i}$ in \eqref{analytic-local-parameterization} with a polynomial $p_i$ as in \eqref{local-polynomial} of degree $m=2$ such that,
\BEA
    \left|D^\ell z_{x_i}(\vec{v}) - D^\ell p_i (\vec{v}) \right| = O\left(h_{X, M}^{3-|\ell|}\right)\label{GMLSerrorbdd}
 \EEA
 for any multi-index $\ell$ with $|\ell| \leq 2$. We remark that since $z_{x_i}: B_{\vec{0}}(r_i) \to \mathbb{R}^n$ is smooth, it can certainly be extended to a smooth function on $\Omega_i^*$.  

Using this result, we can obtain bounds on estimating the necessary local quantities. The proof is simple, and can be found in Appendix \ref{proofs-of-technical-lemmas}.
\begin{lem}
\label{local-metric-approximation-error} Let $M \subset \mathbb{R}^n$ be a 2-dimensional $C^3$ manifold. Denote by $\left\{ \partial_{v_1} \left(  \gamma_{x_i} \right) , \partial_{v_2} \left(\gamma_{x_i} \right) \right\}$ and $\left\{ \partial_{v_1} \left(  \alpha_{x_i} \right) , \partial_{v_2} \left(\alpha_{x_i} \right) \right\}$ the coordinates induced from $\gamma_{x_i}$ defined in \eqref{analytic-local-parameterization} and $\alpha_{x_i}$ defined in \eqref{mls-parameterization-equation}, respectively. 
     Denote the Riemannian metric associated with each set of coordinates by $g_{\gamma_{x_i}}$ and $g_{\alpha_{x_i}}$, and the components of their inverse as $g_{\gamma_{x_i}}^{qs}$ and $g_{\alpha_{x_i}}^{qs}$.
    Let $\Gamma^q_{st}(\gamma_{x_i})$,  $\Gamma^q_{st}\left(\alpha_{x_i}\right)$ denote the Christoffel symbols associated with $g_{\gamma_{x_i}}$, $g_{\alpha_{x_i}}$, respectively. Fix any triangle $T_{i,r} \in R(i)$. Then for any $\vec{v} \in T_{i,r}$, 
    \BEA 
    \left\| \partial_{v_k} \left(  \gamma_{x_i} \right)(\vec{v}) - \partial_{v_k} \left(  \alpha_{x_i} \right)(\vec{v})  \right\| &=& O(h_{X, M}^2), \quad k=1,2 \notag \\
  \left| \sqrt{\det(g_{\gamma_{x_i}})(\vec{v})} -\sqrt{\det(g_{\alpha_{x_i}}(\vec{v}) )} \right| &=& O(h_{X, M}) \notag \\
    \left| \left( g^{qs}_{\gamma_{x_i}} \right)(\vec{v}) -  \left( g^{qs}_{\alpha_{x_i}} \right)(\vec{v}) \right| &=& O(h_{X, M}) \notag \\
    \left|\Gamma^q_{st}\left( \gamma_{x_i} \right)( \vec{v} ) -   \Gamma^q_{st}(\alpha_{x_i})(\vec{v} ) \right| &=& O(h_{X, M}) \notag.
    \EEA
    where the constant term in the big-oh notation for the first three lines above depend on the $C^2$ norm of $\alpha_{x_i}$, while the constant associated with the Christoffel symbol result depends on the $C^3$ norm of $\alpha_{x_i}$.
\end{lem}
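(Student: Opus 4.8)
The plan is to derive everything from the single input estimate \eqref{GMLSerrorbdd}, which controls $|D^\ell z_{x_i}(\vec v) - D^\ell p_i(\vec v)|$ uniformly on $B_{\vec 0}(R_i)$ (and hence on each triangle $T_{i,r}$) for $|\ell|\le 2$: the zeroth-order term is $O(h_{X,M}^3)$, the first derivatives are $O(h_{X,M}^2)$, and the second derivatives are $O(h_{X,M})$. All four claimed bounds are then obtained by writing the relevant quantity (tangent vector, metric, its determinant, its inverse, Christoffel symbol) as an explicit smooth function of $\nabla z_{x_i}$, $\nabla p_i$, and their derivatives, and propagating the error through that function. I would first record two auxiliary facts: (i) on $B_{\vec 0}(R_i)$ the relevant quantities are uniformly bounded with bounds independent of $i$ and $N$ — this follows because $z_{x_i}$ is the graph function of $M$ over $T_{x_i}M$, so $\nabla z_{x_i}(\vec 0)=0$ and $\|\nabla z_{x_i}\|, \|\nabla^2 z_{x_i}\|$ are controlled by the geometry of $M$ for $R_i\le r_M$, and by \eqref{GMLSerrorbdd} the same bounds (up to constants) hold for $p_i$; (ii) consequently $\det g_{\gamma_{x_i}} = 1 + \|\nabla z_{x_i}\|^2$ is bounded \emph{below} away from zero (it is $\ge 1$), and likewise for $g_{\alpha_{x_i}}$, so all divisions and square roots below are Lipschitz on the relevant range.

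Step by step: (1) \textbf{Tangent vectors.} From \eqref{mls-parameterization-equation} and \eqref{analytic-local-parameterization}, $\partial_{v_k}\gamma_{x_i} - \partial_{v_k}\alpha_{x_i} = (0,0,\partial_{v_k} z_{x_i} - \partial_{v_k} p_i)^\top$, whose norm is exactly a first derivative difference, hence $O(h_{X,M}^2)$ by \eqref{GMLSerrorbdd}. (2) \textbf{Metric components.} Each entry of $g$ is a polynomial (degree $\le 2$) in the two first-derivative components; writing $g_{\gamma}-g_{\alpha}$ and using the bilinear/quadratic structure together with the uniform bounds from (i) gives $|g_{\gamma_{x_i},kl}-g_{\alpha_{x_i},kl}| = O(h_{X,M}^2)$ entrywise. (3) \textbf{Determinant.} $\det g = 1+\|\nabla z\|^2$ (resp.\ with $p_i$), so $|\det g_\gamma - \det g_\alpha| = O(h_{X,M}^2)$; since $\sqrt{\cdot}$ is Lipschitz on $[1,\infty)$, the square-root difference is also $O(h_{X,M}^2)$ — in fact better than the claimed $O(h_{X,M})$, so that bound holds a fortiori. (4) \textbf{Inverse metric.} $g^{-1} = \mathrm{adj}(g)/\det g$; the adjugate entries differ by $O(h_{X,M}^2)$ by (2), and $1/\det g$ differs by $O(h_{X,M}^2)$ using the lower bound from (ii), so $|g^{qs}_\gamma - g^{qs}_\alpha| = O(h_{X,M}^2)$, again stronger than claimed. (5) \textbf{Christoffel symbols.} $\Gamma^q_{st} = \tfrac12 g^{qm}(\partial_s g_{mt} + \partial_t g_{ms} - \partial_m g_{st})$; each $\partial g_{kl}$ is a product of a first derivative and a \emph{second} derivative of $z$ (resp.\ $p_i$), so its error is $O(h_{X,M}^1)$ — this is the weakest link — and multiplying by the $O(1)$, $O(h_{X,M}^2)$-accurate inverse metric and adding finitely many such terms yields $|\Gamma^q_{st}(\gamma)-\Gamma^q_{st}(\alpha)| = O(h_{X,M})$, matching the claim. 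All constants are uniform in $i$ because the $C^{m+1}$ seminorm of $z_{x_i}$ on $\Omega_i^*$ is controlled by a global bound on $M$ (bounded curvature, $R_i\le r_M$).

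The main obstacle is not any single calculation but establishing the \emph{uniformity} of the implied constants — both the upper bounds on $\nabla z_{x_i}, \nabla^2 z_{x_i}, \nabla p_i, \nabla^2 p_i$ and, crucially, the lower bound $\det g \ge 1$ that makes the inverse and square root stable. The lower bound is in fact automatic here (graph metrics have $\det \ge 1$), which is a genuine simplification; the upper bounds require quoting that $M$ has bounded geometry so that its graph representation over any tangent ball of radius $\le r_M$ has $C^3$-norm bounded by a constant depending only on $M$, and then transferring this to $p_i$ via \eqref{GMLSerrorbdd} — one must check $h_{X,M}\le h_0$ so that Lemma \ref{mls-approximation-error} applies with $|f|_{C^{m+1}(\Omega_i^*)}$ itself uniformly bounded. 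Once these uniform bounds are in hand, steps (1)--(5) are routine applications of the product rule and the Lipschitz property of $x\mapsto\sqrt x$ and $x\mapsto 1/x$ on a compact interval bounded away from $0$, and several of the stated $O(h_{X,M})$ bounds come out with room to spare.
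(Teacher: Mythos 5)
Your proposal is correct and follows essentially the same route as the paper's proof: both propagate the GMLS bound \eqref{GMLSerrorbdd} through the explicit graph-parameterization formulas for the tangent vectors, metric entries, determinant, inverse, and Christoffel symbols, with the second-derivative error $O(h_{X,M})$ being the weakest link that sets the Christoffel rate. Your additional remarks on the uniform lower bound $\det g \ge 1$ and on the sharper $O(h_{X,M}^2)$ rates for the determinant and inverse are sound refinements of what the paper states more briefly.
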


\subsection{Variational approximation}\label{variational-subsection}
The results of the previous subsection quantify the error between estimating the metric and its derivatives. Here, we quantify the error coming from the estimation of a vector-field $W$ on the space $\mathcal{W}_N$ defined in \eqref{WN}. 

To this end, consider a vector field $W: M \to TM$. While the computation is done via the representation of the vector field under the basis  $\{ \partial_{u_1} \Phi_{C_{i,r}}, \partial_{u_2} \Phi_{C_{i,r}} \}$ evaluated at $(\Phi_{i,r})^{-1}(\vec{v}_{i_{j_r}})$ as discussed in Remark~\ref{parallel_transport_remark}, it is theoretically more convenient to work in the tangent space $T_{x_i}M$, leveraging the error bounds induced by the estimated coordinate system $\left\{ \partial_{v_1} (\alpha_{x_i}), \partial_{v_2} (\alpha_{x_i}) \right\}$ as deduced in Lemma~\ref{local-metric-approximation-error}. Analytically, due to the diffeomorphism outlined in Section \ref{metric-error-quantification-subsection}, any vector field $W:M \to TM$ can be written in the local coordinates as 
\BEA 
W(v_1, v_2) = c^{(i,r)}_1(v_1,v_2) \partial_{v_1} \left(\gamma_{x_i}\right) + c^{(i,r)}_2(v_1,v_2) \partial_{v_2} \left(\gamma_{x_i}\right).  \label{Wlocal} 
\EEA

{
\begin{defi} 
Define the local interpolation $I_N: H^1(TM) \to \mathcal{W}_N$ where $\mathcal{W}_N$ is defined in \eqref{WN}, 
\BEA
I_N W = I_N c^{(i,r)}_1  \left(\gamma_{x_i}\right) + I_N c^{(i,r)}_2(v_1,v_2) \left(\gamma_{x_i}\right),\label{Interp_W}
\EEA
Here, we abused the notation $I_N$ to also acts on any function $f$,
\[
I_N f = \sum_{i} f(x_i)e_i,
\]
where $\{e_i\}$ is the nodal basis.
\end{defi}
With this definition, we can show that.
\begin{lem}{\label{interpolationerror}}
For any $W \in H^2(TM)$, we have,
\[
\left|I_NW - W\right|_{H^k(TM)} \leq c h_{X,M}^{2-k} |W|_{H^2(TM)},
\]
for $k=0,1$ and some constant $c>0$ that can depend on $q_{X,M}$ and the regularity of $g_{\gamma_i}$. 
\end{lem}

\begin{proof} 
First, let us derive a bound for the norm of $D (W - I_NW)$ represented in ambient coordinates. Particularly,
\BEA 
    \| D (W - I_NW)  \|^2 &=& \left\| \sum_{j=1}^n \left( D (W - I_NW ) \cdot \mathbf{e}_j \right) \mathbf{e}_j\right\|^2 \notag \\
    &=&  \sum_{j=1}^n \left( \sum_{k=1}^2 D(c^{(i,r)}_k -I_N c^{(i,r)}_k) \partial_{v_k}(\gamma_{x_i}) \cdot \mathbf{e}_j + (c^{(i,r)}_k - I_Nc^{(i,r)}_k) D \partial_{v_k}(\gamma_{x_i}) \cdot \mathbf{e}_j \right)^2 \notag\\
    &\leq& 4 \sum_{j=1}^n \sum_{k=1}^2  \left(D(c^{(i,r)}_k -I_N c^{(i,r)}_k) \partial_{v_k}(\gamma_{x_i}) \cdot \mathbf{e}_j\right)^2 + \left(c^{(i,r)}_k - I_Nc^{(i,r)}_k) D\partial_{v_k}(\gamma_{x_i}) \cdot \mathbf{e}_j \right)^2 \notag\\
    &\leq& 4nC \sum_{k=1}^2  \left(D(c^{(i,r)}_k -I_N c^{(i,r)}_k)\right)^2 + \left(c^{(i,r)}_k - I_Nc^{(i,r)}_k) \right)^2, 
\EEA 
where we used Young's inequality to bound the mixed terms and set $C>0$ as an upper bound for the basis expansion.

Here, we deduce the case of $k=1$ since the other case follows similar argument.
\BEA
\left|I_NW - W\right|_{H^1(TM)}^2 &=& \int_{M} \langle D(I_NW - W),D(I_NW - W) \rangle_g d\text{Vol}  \notag \\
&=& \sum_{T_{i,r} \in R(i)} \int_{T_{i,r}} \langle D(I_NW - W),D(I_NW - W) \rangle_{g_{\gamma_{i,r}}} \sqrt{\text{det}(g_{\gamma_{i,r}})}\, dv_1\,dv_2 \notag\\
&\leq& g_{max} \sum_{T_{i,r} \in R(i)} \int_{T_{i,r}} \langle D(I_NW - W) ,D(I_NW - W) \rangle_{g_{\gamma_{i,r}}}\, dv_1\,dv_2 \notag \\
&\leq& 4nCg_{max} \sum_{T_{i,r} \in R(i)}  \sum_{k=1}^2 \int_{T_{i,r}}  \left(D(c^{(i,r)}_k -I_N c^{(i,r)}_k)\right)^2 + \left(c^{(i,r)}_k - I_Nc^{(i,r)}_k) \right)^2  \, dv_1\,dv_2, \label{eq1proof_lem4.3}
\EEA
where we have used the local parameterization defined in \eqref{analytic-local-parameterization} in the second equality, the regularity of the metric tensor $(g_{\gamma_i})_{k\ell}$ in the third line above, that is, $0 <g_{min}\delta_{k\ell} \leq (g_{\gamma_i})_{k\ell} \leq \delta_{k\ell}g_{max} < \infty$. 

Consider the following standard finite element result. For any polynomial of degree $m-1$ interpolant $I_Nf$ of a function $f \in W^{m,p}(T_{i,r})$, we have the following local interpolation error bound (see e.g. Theorem 4.4.4, in \cite{brenner2008mathematical}):
\[
\left|I_Nf - f \right|_{W^{\ell,p}(T_{i,r})} \leq c h_i^{m-\ell} q_i^{\ell} \left|f \right|_{W^{m,p}(T_{i,r})}
\]
$1\leq p\leq \infty$, $0\leq \ell \leq m$. It follows that, for $m = 2$, $p=2$, $\ell=1$, we have
\[
\left|I_Nf - f \right|_{H^{1}(T_{i,r})} \leq c h_i q_i |f|_{H^{2}(T_{i,r})},
\]
Inserting this bound to \eqref{eq1proof_lem4.3}, we obtain, 
\BEA
\left|I_NW - W\right|_{H^1(TM)} &\leq & C\sqrt{ng_{max}} h_{X.M}  \left(\sum_{T_{i,r} \in R(i)}\sum_{k=1}^2 \int_{T_{i,r}} (D^2f)^2\,dv_1\,dv_2 \right)^{1/2} \notag \\ 
&\leq & C\sqrt{ng_{max}} h_{X.M}  \left(\sum_{T_{i,r} \in R(i)}\sum_{j=1}^n \sum_{k=1}^2 \int_{T_{i,r}} (D^2f \partial_{v_k}(\gamma_{x_i}) \cdot \mathbf{e}_j )^2\,dv_1\,dv_2 \right)^{1/2} \notag \\
 &\leq& C \frac{\sqrt{g_{max}}}{\sqrt{g_{min}}} h_{X.M}  \sum_{T_{i,r} \in R(i)} \int_{T_{i,r}} \langle D^2W, D^2W \rangle_{g_{\gamma_{i,r}}} \sqrt{\text{det}(g_{\gamma_{i,r}})}\, dv_1\,dv_2 \notag \\
&\leq & C h_{X,M}^2  |W|_{H^2(TM)},
\EEA
where we used the regularity of metric tensor $g_{\gamma_i}$ and abuse the constant notation $C>0$ that are different after each inequality.
\end{proof}

For technical analysis below, we define the Ritz projection.

\begin{defi}[Ritz projection] 
Let $R_N: H^1(TM) \to \mathcal{W}_N$ be the Ritz projection, which  is the orthogonal projection in the following sense,
\BEA
a(W - R_NW, V_N) = 0, \quad \forall V_N \in \mathcal{W}_N. \label{ortho}
\EEA
\end{defi}
Following the standard argument in a priori analysis for FEM solution to finding $W\in H^1(TM)$ such that $a(W,V) = b(V,V)$ for all $V \in H^1(TM)$, with $R_NW$ be the solution in $\mathcal{W}_N$, one can deduce the well-known a priori error bound,
\BEA
\|W - R_NW\|_{H^1(TM)} \leq C h_{X,M} |W|_{H^2(TM)}\label{aprioriH1}.
\EEA

For the $L^2$-norm error bound, consider the following auxiliary problem,
\[
a(E,\Phi) = b(E,E) = \|E\|_{L^2(TM)}, 
\]
where $E = W - R_NW$ and $\Phi \in H^1(TM)$. Note that, 
\[
\|E\|_{L^2(TM)}^2  = a(E, \Phi - I_N\Phi) \leq M \|E\|_{H^1(TM)} \|\Phi - I_N\Phi \|_{H^1(TM)} \leq C h_{X,M}^2 |W|_{H^2(TM)} \|\phi\|_{H^2(TM)} ,
\]
where we have used the orthogonality condition in \eqref{ortho} in the first equality, continuity of $a$ in the second equality, and finally, the interpolation error in Lemma~\ref{interpolationerror} and the a priori error bound \eqref{aprioriH1}. By the standard regularity result for the auxiliary problem above, $\|\phi\|_{H^2(TM)} \leq C \|E\|_{L^2(TM)}$,
we conclude the $L^2$-error bound for the Ritz projection,
\BEA
\|W - R_NW\|_{L^2(TM)} \leq C h_{X,M}^2 |W|_{H^2(TM)}. \label{Ritz_L2error}
\EEA 
With this preliminary, we can deduce the following bound for the eigenvalues.

\begin{lem} \label{spectralerrorvar}
Let $\lambda_\ell$ and $\lambda_{\ell,N}$ be the $\ell$th eigenvalues of \eqref{Bocheigprob}
and \eqref{variationalBochnereigvalprob}, respectively. Then, there exists $C>0$ such that
\[
\lambda_\ell \leq \lambda_{\ell,N} \leq \lambda_{\ell} + C h_{X,M}^2. 
\]
\end{lem}

\begin{proof}
The first equality inequality above is clear since
\[
\lambda_\ell  = \min_{S_\ell \subset H^1(TM)}\max_{W\in S_\ell} \frac{a(W,W)}{b(W,W)}, \quad\quad \lambda_{\ell,N}  = \min_{S_\ell \subset \mathcal{W}_N}\max_{W\in S_\ell} \frac{a(W,W)}{b(W,W)},
\]
where $\text{dim}(S_\ell) = \ell$, and $\mathcal{W}_N \subset H^1(TM)$.

To prove the second inequality, let $E_\ell$ be the space spanned by the leading eigenvector fields, $\Phi_1, \ldots, \Phi_\ell$. Denote $E_{\ell,N} = R_N E_\ell$, where $R_N$ denotes the Ritz projection as defined above. Then,
\BEA
\lambda_{\ell, N} \leq \max_{W\in E_{\ell,N}} \frac{a(W,W)}{b(W,W)}  = \max_{W\in E_{\ell}} \frac{a(R_NW,R_NW)}{b(R_NW,R_NW)} \leq \max_{W\in E_{\ell}} \frac{ a(W,W)}{\|R_NW\|_{L^2(TM)}},\label{ineq_lemma44}
\EEA
where in the last inequality, we use the fact that $a(R_NW,R_NW) + a(R_NW-W,R_NW-W) =  a(W,W)$.  For $W \in E_\ell$,  \eqref{Ritz_L2error} becomes, 
\[
\|W - R_NW\|_{L^2(TM)} \leq C h_{X,M}^2 |W|_{H^2(TM)} \leq C h_{X,M}^2 |D^2W|_{L^2(TM)} \leq C \lambda_\ell \|W\|_{L^2(TM)},
\]
and it is clear that,
\[
\|R_NW\|_{L^2(TM)} \geq \| W\|_{L^2(TM)} - \| W - R_NW\|_{L^2(TM)} \geq \| W\|_{L^2(TM)} (1 - C h^2_{X,M}).
\]
Inserting this to \eqref{ineq_lemma44}, we obtain,
\[
\lambda_{\ell, N} \leq  \max_{W\in E_{\ell}} \frac{ a(W,W)}{\|W\|_{L^2(TM)}} (1+ Ch^2_{X,M}) = \lambda_\ell +Ch_{X,M}^2,
\]
and the proof is completed.
\end{proof}

\subsection{Local vector field approximation}\label{local-vector-field-subsection}

In this section, we will derive a bound between eigenvalues between the generalized eigenvalue problems in \eqref{gen-eigvalprob} and \eqref{gen-eigvalprob2}. To achieve this, we first derive the absolute error between components of $\mathbf{A}$ and $\mathbf{\widehat{A}}$ and components of $\mathbf{B}$ and $\mathbf{\widehat{B}}$, 
which amount to bound errors between $\mathbf{S}$ and $\mathbf{\widehat{S}}$ and $\mathbf{M}$ and $\mathbf{\widehat{M}}$.

Technically, we will need the bound the error between estimators $W_N \in \mathcal{W}_N$ and $\widehat{W}_N \in \mathcal{\widehat{W}}_N$. 

For convenience, we represent $W_N$ and  $\widehat{W}_N$ on $T_{i,r}$ as follows,
\BEA 
 W_N(v_1,v_2) &=& b^{(i,r)}_1(v_1,v_2) \partial_{v_1} (\gamma_{x_i})  + b^{(i,r)}_2(v_1,v_2) \partial_{v_2} (\gamma_{x_i}) \label{WNloc}\\
 \widehat{W}_N(v_1,v_2) &=& \widehat{b}^{(i,r)}_1(v_1,v_2) \partial_{v_1} (\alpha_{x_i})  + \widehat{b}^{(i,r)}_2(v_1,v_2) \partial_{v_2} (\alpha_{x_i}) \label{approx_vf}  
\EEA
where the estimated coefficients are functions of $T_{i,r}$. Here, we recall that $\gamma_{x_i}$ and $\alpha_{x_i}$ denote the true local parameterization in \eqref{analytic-local-parameterization} and the curved mesh parameterization in \eqref{mls-parameterization-equation}, respectively.

\begin{lem}
\label{local-vector-field-approximation-error}
Let $M$ be a two-dimensional $C^3$-manifold. For a given $x_i \in X$, consider any triangle $T_{i,r} \in R(i)$, and fix any $(v_1,v_2) \in T_{i,r}$. Denote by $W_N(v_1,v_2) \in \mathcal{W}_N$ in \eqref{WNloc} as an approximation to the solution $W$ of \eqref{variationalBochnereigvalprob} in the analytic charts induced by $\gamma_{x_i}$. Let $\widehat{W}(v_1, v_2)$ denote the estimation defined in \eqref{approx_vf}, where the parameterizations $\{ \alpha_{x_i} \}_{i=1}^N$ are estimated using polynomials of degree$-2$ as in \eqref{local-polynomial}. Then
   \BEA 
\left\| D^\ell W_N(v_1,v_2) -  D^\ell \widehat{W}_N(v_1,v_2) \right\| = O(h_{X, M}), 
   \EEA 
   where $h_{X,M}$ is the global fill distance defined as before and $\ell$ is an index of order $|\ell| \leq 1$, and the constant depends on the local separation distance $q_i$.  
\end{lem}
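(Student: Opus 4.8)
The plan is to split $D^\ell W - D^\ell \widehat W$ on the triangle $T_{i,r}$ into a \emph{coefficient error} and a \emph{basis error}. Writing $W = \sum_{k=1}^{2} c^{(i,r)}_k\,\partial_{v_k}(\gamma_{x_i})$ and $\widehat W = \sum_{k=1}^{2}\widehat b^{(i,r)}_k\,\partial_{v_k}(\alpha_{x_i})$ as in the statement, I would add and subtract $\sum_k \widehat b^{(i,r)}_k\,\partial_{v_k}(\gamma_{x_i})$ to get
\[
W - \widehat W = \sum_{k=1}^{2}\big(c^{(i,r)}_k - \widehat b^{(i,r)}_k\big)\partial_{v_k}(\gamma_{x_i}) + \sum_{k=1}^{2}\widehat b^{(i,r)}_k\big(\partial_{v_k}(\gamma_{x_i}) - \partial_{v_k}(\alpha_{x_i})\big),
\]
and then differentiate termwise by the Leibniz rule. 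The basis-error terms are the easy ones: since $\gamma_{x_i}$ and $\alpha_{x_i}$ differ only through their height components $z_{x_i}$ and $p_i$, the GMLS estimate \eqref{GMLSerrorbdd} (which also underlies Lemma~\ref{local-metric-approximation-error}) gives $\|\partial_{v_k}(\gamma_{x_i}) - \partial_{v_k}(\alpha_{x_i})\| = O(h_{X,M}^{2})$ and $\|D(\partial_{v_k}(\gamma_{x_i}) - \partial_{v_k}(\alpha_{x_i}))\| = O(h_{X,M})$. Together with the elementary bounds $\|\partial_{v_k}(\gamma_{x_i})\|, \|D\partial_{v_k}(\gamma_{x_i})\| = O(1)$ (uniform $C^3$ control of the local graph functions over the compact $M$), $|\widehat b^{(i,r)}_k| = O(1)$, and $\|D\widehat b^{(i,r)}_k\| = O(h_{X,M}^{-1})$ --- the last because $\widehat b^{(i,r)}_k$ is affine on $T_{i,r}$, whose diameter is $\asymp h_{X,M}$ and whose shape regularity is controlled by the local separation distance $q_i$ via quasi-uniformity \eqref{quasi-uniform-equation} --- every basis-error term is $O(h_{X,M})$.

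The bulk of the work is the coefficient error $c^{(i,r)}_k - \widehat b^{(i,r)}_k$ and its first derivative, which I would decompose once more as $(c^{(i,r)}_k - \Pi_h c^{(i,r)}_k) + (\Pi_h c^{(i,r)}_k - \widehat b^{(i,r)}_k)$, where $\Pi_h c^{(i,r)}_k$ is the piecewise-linear interpolant of the \emph{exact} nodal values of $c^{(i,r)}_k$ on $T_{i,r}$. Since $W\in C^2$ and $\gamma_{x_i}$ is smooth, the coefficient $\vec v\mapsto c^{(i,r)}_k(\vec v) = \langle W(\gamma_{x_i}(\vec v)), \mathbf t^{(i)}_k\rangle$ is $C^2$, so standard finite-element interpolation estimates bound the first piece by $O(h_{X,M}^{2})$ in sup norm and $O(h_{X,M})$ in the $W^{1,\infty}$ seminorm, with constants depending on the triangle's shape regularity (hence on $q_i$). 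For the second piece, both $\Pi_h c^{(i,r)}_k$ and $\widehat b^{(i,r)}_k$ are affine on $T_{i,r}$, so it suffices to bound their difference at the three vertices; the crucial claim is that at each vertex $\vec v_{i_{j_r}}$ (corresponding to $x_{i_{j_r}}\in X$) one has $|c^{(i,r)}_k(\vec v_{i_{j_r}}) - \widehat b^{(i,r)}_k(\vec v_{i_{j_r}})| = O(h_{X,M}^{2})$. This is where Lemma~\ref{local-metric-approximation-error} enters essentially: $\{\partial_{v_k}(\gamma_{x_i})(\vec v_{i_{j_r}})\}$ spans $T_{x_{i_{j_r}}}M$ exactly while $\{\partial_{v_k}(\alpha_{x_i})(\vec v_{i_{j_r}})\}$ is $O(h_{X,M}^{2})$-close to it, so the plane of the estimated basis makes angle $O(h_{X,M}^{2})$ with $T_{x_{i_{j_r}}}M$; hence the least-squares projection of the tangent vector $W(x_{i_{j_r}})$ onto that plane --- which is exactly how $\widehat b^{(i,r)}_k(\vec v_{i_{j_r}})$ is produced in Remark~\ref{parallel_transport_remark}, using the assumed exactness of $\overline{T_{x_{i_{j_r}}}M}$ --- recovers $W(x_{i_{j_r}})$ up to $O(h_{X,M}^{2})$. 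Expanding $W(x_{i_{j_r}})$ and its projection in the two bases, subtracting, using $|c^{(i,r)}_k|=O(1)$ and the $O(h_{X,M}^2)$ basis discrepancy, and inverting the uniformly well-conditioned Gram matrix of $\{\partial_{v_k}(\alpha_{x_i})(\vec v_{i_{j_r}})\}$ then yields the claim. Finally, an affine function that is $O(h_{X,M}^{2})$ at all three vertices of a shape-regular triangle of diameter $\asymp h_{X,M}$ is itself $O(h_{X,M}^{2})$ and has gradient $O(h_{X,M})$, so $\|\Pi_h c^{(i,r)}_k - \widehat b^{(i,r)}_k\|_\infty = O(h_{X,M}^{2})$ and $\|D(\Pi_h c^{(i,r)}_k - \widehat b^{(i,r)}_k)\|_\infty = O(h_{X,M})$; reassembling all pieces through the Leibniz expansion gives the stated bound.

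The hard part is the coefficient-error contribution to the \emph{first} derivative: differentiating the hat-function interpolant of perturbed nodal data amplifies the vertex errors by a factor $h_{X,M}^{-1}$, so a crude $O(h_{X,M})$ estimate of the vertex coefficient errors would only give $O(1)$. Overcoming this requires the sharper $O(h_{X,M}^{2})$ vertex estimate above, which in turn relies on the \emph{second}-order (not merely first-order) accuracy of the estimated tangent vectors from Lemma~\ref{local-metric-approximation-error}, together with careful tracking of the shape-regularity constants so that they do not conspire with the $h_{X,M}^{-1}$ amplification; the stated dependence of the final constant on $q_i$ records precisely this bookkeeping.
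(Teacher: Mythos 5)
Your decomposition is essentially the paper's: the paper introduces two intermediate estimators $\widetilde{W}$ and $\overline{W}$ built from the exact-nodal-value interpolants $\widehat{c}^{(i,r)}_k$ in the $\gamma_{x_i}$- and $\alpha_{x_i}$-bases respectively, which after regrouping is exactly your split into (i) FEM interpolation error of the coefficients, (ii) basis error $\partial_{v_k}(\gamma_{x_i})-\partial_{v_k}(\alpha_{x_i})$ controlled by Lemma~\ref{local-metric-approximation-error}, and (iii) a vertexwise least-squares perturbation bound for $\widehat{c}^{(i,r)}_k-\widehat{b}^{(i,r)}_k$ propagated through the affine interpolant. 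The one place you genuinely diverge is piece (iii): the paper invokes the perturbation bound with $\|\mathbf{D}-\mathbf{C}\|=O(h_i)$, obtains only $|\widehat{c}^{(i,r)}_k-\widehat{b}^{(i,r)}_k|=O(h_i)$ at the vertices, and then asserts $|D^\ell(\widehat{b}^{(i,r)}_k-\widehat{c}^{(i,r)}_k)|=O(h_i)$ for $|\ell|=1$ ``immediately from definition of hat functions'' --- which, as you correctly observe, does not follow, since differentiating the affine interpolant amplifies vertex errors by $h_i^{-1}$ and an $O(h_i)$ vertex bound only yields an $O(1)$ gradient. Your version uses the full strength of Lemma~\ref{local-metric-approximation-error} (the basis discrepancy is $O(h_{X,M}^{2})$, not merely $O(h_{X,M})$) to get the $O(h_{X,M}^{2})$ vertex estimate, after which the gradient bound is legitimate. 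So your argument is not just the same route; it supplies the missing step that makes the paper's own treatment of the derivative case rigorous.
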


\begin{proof}
Fix $(v_1,v_2) \in T_{i,r}$. 
We introduce an intermediate estimator:
\BEA 
 \widetilde{W}_N(v_1, v_2) &=& \widehat{b}^{(i,r)}_1(v_1,v_2) \partial_{v_1} \left(\gamma_{x_i}\right)(v_1,v_2) + \widehat{b}^{(i,r)}_2(v_1,v_2) \partial_{v_2} \left(\gamma_{x_i}\right)(v_1,v_2),  \notag 
\EEA 
where $\widehat{c}_k^{(i,r)}$ is an interpolation of $c^{(i,r)}_k$ using nodal basis functions and the values on the vertices of $T_{i,r}$. We split the error as follows:
\BEA 
\left\| D^\ell W_N(v_1,v_2) -  D^\ell \widehat{W}_N(v_1,v_2) \right\| &\leq& \left\| D^\ell W_N(v_1,v_2) - D^\ell \widetilde{W}_N(v_1,v_2) \right\| + \left\| D^\ell \widetilde{W}_N(v_1,v_2) - D^\ell \widehat{W}_N(v_1,v_2) \right\| \notag
\EEA 
The error of the first term can be analyzed in the basis of the ambient space: 
\BEA 
    \left\| D^\ell W_N(v_1,v_2) - D^\ell \widetilde{W}_N(v_1,v_2) \right\| &=& \left\| \sum_{j=1}^n \left( \left(D^\ell W_N(v_1,v_2) - D^\ell \widetilde{W}_N(v_1,v_2) \right) \cdot \mathbf{e}_j \right) \mathbf{e}_j\right\| \notag \\
    &=& \sqrt{ \sum_{j=1}^n \left( \sum_{k=1}^d D^\ell (b^{(i,r)}_k - \widehat{b}^{(i,r)}_k) \partial_{v_k}(\gamma_{x_i}) \cdot \mathbf{e}_j + (b^{(i,r)}_k - \widehat{b}^{(i,r)}_k) D^\ell \partial_{v_k}(\gamma_{x_i}) \cdot \mathbf{e}_j \right)^2. } \notag 
\EEA 
For simplicity of notation, we have suppressed the evaluation on $(v_1,v_2)$ above, as we do in what follows. We note that, for quasi-uniform data, we have $h_i/q_i \leq c_{q_i}$. 

Clearly, it suffices to bound $| D^\ell ( b^{(i,r)}_k - \widehat{b}^{(i,r)}_k )|$. Recall that $b^{i,r}_k$ is interpolated by hat functions based on the values $c_k^{(i,r)}(x_i), c_k^{(i,r)}(x_{i_{j_r}}), c_k^{(i,r)}(x_{i_{k_r}})$ written in the basis $\{ \partial_{v_1} (\gamma_{x_i}), \partial_{v_2} (\gamma_{x_i}) \}$ (see \eqref{Wlocal}). 
Similarly for $\widehat{b}^{(i,r)}_k$, but in the basis $\{  \partial_{v_1} (\alpha_{x_i}), \partial_{v_2} (\alpha_{x_i}) \} $. 

Let $\vec{v}_{i_{j_r}}$ denote an arbitrary vertex in $T_{i,r}$. Define the following:
\BEA 
\mathbf{D} = \Bigg[\partial_{v_1}(\gamma_{x_i}) (\vec{v}_{i_{j_r}}),  \partial_{v_2}(\gamma_{x_i}) (\vec{v}_{i_{j_r}})\Bigg], & \mathbf{\widehat{D}} = \Bigg[\partial_{v_1}(\alpha_{x_i}) (\vec{v}_{i_{j_r}}), \partial_{v_2}(\alpha_{x_i}) (\vec{v}_{i_{j_r}})\Bigg], \notag \\ 
\mathbf{b} = \Bigg(b^{(i,r)}_1 (\vec{v}_{i_{j_r}}), b^{(i,r)}_2 (\vec{v}_{i_{j_r}})\Bigg)^\top, & \mathbf{\widehat{b}} = \Bigg(\widehat{b}^{(i,r)}_1 (\vec{v}_{i_{j_r}}), \widehat{b}^{(i,r)}_2 (\vec{v}_{i_{j_r}})\Bigg)^\top. \notag 
\EEA 
We remark that $W(\vec{v}_{i_{j_r}}) = \mathbf{D} \mathbf{b}$, and $W(\vec{v}_{i_{j_r}}) = \mathbf{\widehat{D}} \mathbf{\hat{b}} + \hat{\mathbf{r}}$, where the first regression problem has no residual as $\widehat{\mathbf{b}} = \mathbf{c} = (c^{(i,r)}_1 (\vec{v}_{i_{j_r}}), c^{(i,r)}_2 (\vec{v}_{i_{j_r}})^\top$ on the vertex of the $T_{i,r}$, whereas the second regression problem may have a residual due to the approximation of the basis representation. Since $ \| \partial_{v_k}(\gamma_{x_i}) - \partial_{v_k} (\alpha_{x_i}) \| = O(h^2_i)$ (by Lemma~\ref{local-metric-approximation-error}) , it follows that $\| \mathbf{D} - \mathbf{C} \|_2 \leq \| \mathbf{D} - \mathbf{\widehat{D}} \|_F = O(h^2_i)$, where $ \| \cdot \|_2 $ and $ \| \cdot \|_F $ denote the spectral and Frobenius matrix norms, respectively. Since the condition number of $\kappa_2(\mathbf{C})= O(1)$, for any small perturbation  $h^2_i  \ll 1$, it follows from standard least squares perturbation results (e.g., see Theorem~3.4 of \cite{demmel1997applied}) 
that $\|\mathbf{b} - \mathbf{\widehat{b}}\| \leq h^2_i 2\kappa_{2}(\mathbf{D})\|\mathbf{b}\|$. Hence, 
\BEA 
\left|b^{(i,r)}_k ( \vec{v}_{i_{j_r}}) - \widehat{b}^{(i,r)}_k ( \vec{v}_{i_{j_r}} )\right| = O(h^2_i). \notag  
\EEA 
Since $\vec{v}_{i_{j_r}}$ is an arbitrary vertex of $T_{i,r}$, it follows immediately from definition of hat functions that 
\BEA 
\left|D\left(b^{(i,r)}_k (v_1,v_2) - \widehat{b}^{(i,r)}_k (v_1,v_2)\right)\right| = O(h_i), \quad \forall (v_1,v_2)\in T_{i,r}, \notag
\EEA 
where the above constant in the big-oh notation depends on the condition number of $\mathbf{D}$.

Hence, 
\BEA 
\left\| D^\ell W_N(v_1,v_2) - D^\ell \widetilde{W}_N(v_1,v_2) \right\| = O(h_i).  \notag 
\EEA 
For the term $ \left\|D^\ell \widetilde{W}_N(v_1,v_2) - D^\ell \widehat{W}_N(v_1,v_2) \right\|$, we again analyze the error in the basis of the ambient space:
\BEA 
\left\| D^\ell \widetilde{W}_N(v_1,v_2) - D^\ell \widehat{W}_N(v_1,v_2) \right\| &=& \left\| \sum_{j=1}^n \left( (D^\ell \widetilde{W}_N(v_1,v_2) - D^\ell \widehat{W}_N(v_1,v_2) ) \cdot \mathbf{e}_j \right) \mathbf{e}_j\right\| \notag \\
    &=& \sqrt{ \sum_{j=1}^n \left( \sum_{k=1}^d (D^\ell \widehat{b}^{(i,r)}) ( \partial_{v_k}(\gamma_{x_i}) - \partial_{v_k}(\alpha_{x_i}) ) \cdot \mathbf{e}_j + \widehat{b}^{(i,r)} D^\ell ( \partial_{v_k}(\gamma_{x_i}) - \partial_{v_k}(\alpha_{x_i}) ) \cdot \mathbf{e}_j \right)^2. } \notag 
\EEA 
We proved in Lemma \ref{local-metric-approximation-error} that $\left\| D^\ell ( \partial_{v_k}(\gamma_{x_i}) - \partial_{v_k}(\alpha_{x_i}) )\right\| = O(h_i^{2-|\ell|})$ for $\ell=0,1$. 

Hence, 
\BEA 
\left\| D^\ell \widetilde{W}_N(v_1,v_2) - D^\ell \widehat{W}_N(v_1,v_2) \right\| = O(h_i), \notag 
\EEA 
where the constant in the big-oh notation depend of $C^3$-norm of $\alpha_{x_i}$.
 Finally, we remark that each above bound in $h_i$ has a global bound in terms of $h_{X, M}$. This completes the proof. 
\end{proof}

The following local integration convergence result follows immediately from Lemmas \ref{local-metric-approximation-error} and \ref{local-vector-field-approximation-error}. Its proof can be found in Appendix \ref{proofs-of-technical-lemmas}
\begin{lem}
\label{local-integral-convergence-rate}
    Denote by $V_N, W_N \in \mathcal{W}_N$ and $V_N, W_N \in \mathcal{\widehat{W}}_N$ defined as in the Lemma~\ref{local-vector-field-approximation-error}. Let $DW_N = (\nabla W_N)_{\gamma_{x_i}}$ and $D\widehat{W}_N = (\nabla \widehat{W}_N)_{\alpha_{x_i}}$ be represented in coordinates $\gamma_{x_i}$ and $\alpha_{x_i}$, respectively.
    \comment{
    $$
    (\nabla W_N)_{\gamma_{x_i}} := g_{\gamma_{x_i}}^{sj} \Bigg(  \frac{\partial W_N^t}{\partial v_s} + W_N^p \Gamma^t_{ps}(\gamma_{x_i}) \Bigg) \frac{\partial}{ \partial v_t} \otimes \frac{\partial} {\partial v_j}.
    $$ 
    Similarly, 
    $$
    (\nabla \widehat{W}_N)_{\alpha_{x_i}} := g_{\alpha_{x_i}}^{sj} \Bigg(  \frac{\partial \widehat{W}_N^t}{\partial v_s} + \widehat{W}_N^p \Gamma^t_{ps}(\alpha_{x_i}) \Bigg) \frac{\partial}{ \partial v_t} \otimes \frac{\partial}{\partial v_j}.
    $$ }
    Then 
    \BEA 
 \left | \int_{T_{i,r}}  \left( \Bigg\langle (\nabla U_N)_{\gamma_{x_i}} , (\nabla W_N)_{\gamma_{x_i}}
 \Bigg\rangle_{g_{\gamma_{x_i}}} \sqrt{ \textup{det}\left(g_{\gamma_{x_i}} \right)} 
 -  \Bigg\langle  (\nabla \widehat{U}_N)_{\alpha_{x_i}}, (\nabla \widehat{W}_N)_{\alpha_{x_i}}
\Bigg\rangle_{g_{\alpha_{x_i}}} \sqrt{ \textup{det}\left(g_{\alpha_{x_i}} \right)} \right) dv_1dv_2 \right| = O(h_{X,M}). \notag
\EEA 
and
    \BEA 
 \left | \int_{T_{i,r}}  \left( \Bigg\langle (U_N)_{\gamma_{x_i}} , (W_N)_{\gamma_{x_i}}
 \Bigg\rangle_{g_{\gamma_{x_i}}} \sqrt{ \textup{det}\left(g_{\gamma_{x_i}} \right)} 
 -  \Bigg\langle  (\widehat{U}_N)_{\alpha_{x_i}}, (\widehat{W}_N)_{\alpha_{x_i}}
\Bigg\rangle_{g_{\alpha_{x_i}}} \sqrt{ \textup{det}\left(g_{\alpha_{x_i}} \right)} \right) dv_1dv_2 \right| = O(h_{X,M}). \notag
\EEA 
\end{lem}

To finalize the result, we review the following perturbation theory for generalized eigenvalue problem (Theorem 8.7.3 in \cite{golub2013matrix} that was originally derived in \cite{stewart1978perturbation}. 

\begin{prop}\label{perturbtheory}Suppose $\mathbf{A}-\lambda \mathbf{B} \in \mathbb{R}^{2N\times 2N}$ is symmetric-definite pencil with eigenvalues $\lambda_{1,N}\geq \lambda_{2,N} \geq \ldots \geq \lambda_{2N,N}$. Let $\mathbf{\widehat{A}}-\lambda \mathbf{\widehat{B}}$ be symmetric definite pencil with eigenvalues $\widehat{\lambda}_{1,N}\geq \widehat{\lambda}_{2,N} \geq \ldots \geq \widehat{\lambda}_{2N,N}$. Suppose that $\mathbf{\widehat{A}} = \mathbf{A} + \mathbf{E}_A$ and 
$\mathbf{\widehat{B}} = \mathbf{B} + \mathbf{E}_B$ satisfy,
\[
\epsilon^2 := \|\mathbf{E}_A\|_2^2 + \|\mathbf{E}_B\|_2^2 < c(\mathbf{A},\mathbf{B}), 
\]
where 
\[
c(\mathbf{A},\mathbf{B}) := \min_{\|\mathbf{x}\|_2=1} (\mathbf{x}^\top \mathbf{A} \mathbf{x})^2 +(\mathbf{x}^\top \mathbf{B} \mathbf{x})^2 > 0.
\]
is known as the Crawford number of the pencil $\mathbf{A}-\lambda \mathbf{B} $. Then,
\[
\frac{\left|\lambda_{\ell,N} -\widehat{\lambda}_{\ell,N} \right|}{\left|1 +\lambda_{\ell,N}\widehat{\lambda}_{\ell,N}\right| } \leq \left|\tan \left(\tan^{-1}(\lambda_{\ell,N}) -\tan^{-1}(\widehat{\lambda}_{\ell,N})\right)\right| \leq \frac{\epsilon}{c(\mathbf{A},\mathbf{B})}.
\]
\end{prop}

The condition $\epsilon^2 < c(\mathbf{A},\mathbf{B})$ ensures the perturbation size does not exceed the gap that introduces indefinite matrix. Since $\mathbf{A}$ and $\mathbf{B}$ are symmetric positive definite, they are simultaneously diagonalized (see Corollary 8.7.2 in \cite{golub2013matrix}), and one can write $\lambda_{\ell,N} = \frac{a_\ell}{b_\ell}$, where $a_\ell$ and $b_\ell$ are eigenvalues of $\mathbf{A}$ and $\mathbf{B}$, respectively. Hence,
the spectral gap condition simply means,
\[
\epsilon^2 < \min_{\ell=1,\ldots, 2N} a_\ell^2+ b_\ell^2.
\]

We can now finalize this section with the following result.

\begin{lem}\label{spectralerrorgeneigval}
Suppose $\min_{\ell=1,\ldots, N} a_\ell^2+ b_\ell^2 > C^2 h_{X,M}^2$, where the constant $C$ corresponds to the constant in the big-oh notations in Lemma~\ref{local-integral-convergence-rate}. Then,
\[
\left|\lambda_{\ell,N} -\widehat{\lambda}_{\ell,N} \right| \leq \frac{Ch_{X,M}}{c(\mathbf{A},\mathbf{B})}\left|1 +\lambda_{\ell,N}\widehat{\lambda}_{\ell,N}\right|.
\]
\end{lem}

\begin{proof}
Recall that we can reparameterize the integrands in Lemma~\ref{local-integral-convergence-rate} with $\Phi_{i,r}: T \to T_{i,r}$ such that,
\BEA 
\int_{T_{i,r}}  \Bigg\langle  (\nabla \widehat{U}_N)_{\gamma_{x_i}}, (\nabla \widehat{W}_N)_{\gamma_{x_i}}
\Bigg\rangle_{g_{\gamma_{x_i}}} \sqrt{ \textup{det}\left(g_{\gamma_{x_i}} \right)} dv_1dv_2\notag  &=&
\int_{T}  \Bigg\langle  (\nabla \widehat{U}_N)_{\Upsilon_{i,r}}, (\nabla \widehat{W}_N)_{\Upsilon_{i,r}}
\Bigg\rangle_{g_{{\Upsilon_{i,r}}}} \sqrt{ \textup{det}\left(g_{\Upsilon_{i,r}} \right)} du_1du_2 .
\\
\int_{T_{i,r}}  \Bigg\langle  (\nabla \widehat{U}_N)_{\alpha_{x_i}}, (\nabla \widehat{W}_N)_{\alpha_{x_i}}
\Bigg\rangle_{g_{\alpha_{x_i}}} \sqrt{ \textup{det}\left(g_{\alpha_{x_i}} \right)} dv_1dv_2\notag  &=&
\int_{T}  \Bigg\langle  (\nabla \widehat{U}_N)_{\Phi_{C_{i,r}}}, (\nabla \widehat{W}_N)_{\Phi_{C_{i,r}}}
\Bigg\rangle_{g_{{\Phi_{C_{i,r}}}}} \sqrt{ \textup{det}\left(g_{\Phi_{C_{i,r}}} \right)} du_1du_2. 
\EEA 
Letting $\hat{U}_N = e_i\mathbf{t}_k^{(i)}$ and $\hat{W}_N = e_j\mathbf{t}_\ell^{(j)}$ for any $i,j=\{1,\ldots, N\}$ and $k,\ell \in \{1,2\}$. From the definition of the stiffness matrix in \eqref{Siffnessmatrix}, it is clear that  $\| \mathbf{S} - \widehat{\mathbf{S}}\|_2\leq \| \mathbf{S} - \widehat{\mathbf{S}}\|_F = O(h_{X,M})$. This implies that, $\| \mathbf{E}_A\|_2 = \| \widehat{\mathbf{A}} - \mathbf{A}\|_2 = O(h_{X,M})$. Using similar argument, we also have, $\| \mathbf{E}_B\|_2 = \| \widehat{\mathbf{B}} - \mathbf{B}\|_2 = O(h_{X,M})$. By the assumption and Proposition~\ref{perturbtheory}, the proof is complete.
\end{proof}

\subsection{Main result} \label{main-subsection}

Collecting the assumptions and results from Lemma~\ref{spectralerrorvar} and \eqref{spectralerrorgeneigval}, we obtain.

\begin{theo}\label{maintheorem} Let $M \subset \BR^n$ on a $d-$dimensional $C^3$ manifold. Let $\lambda_\ell$ and $\widehat{\lambda}_{\ell,N}$ be the $\ell$th eigenvalues of \eqref{Bocheigprob} and \eqref{gen-eigvalprob2}, respectively. Denote eigenvalues of $\mathbf{A}$ as $a_1\geq a_2 \geq \ldots \geq a_{2N}>0$ and eigenvalues of $\mathbf{B}$ as $b_1\geq b_2 \geq \ldots \geq b_{2N}>0$, where these matrices are defined in \eqref{gen-eigvalprob}. Suppose that the following spectral gap condition holds,
\[
\min_{\ell=1,\ldots,2N} a_\ell^2 +b_\ell^2 > C^2 h_{X,M}^2, 
\]
where $h_{X,M}$ is the fill distance of a quasi-uniform point cloud data, $X= \{x_1,\ldots,x_N\} \subset M$ as defined in Section~\ref{metric-error-quantification-subsection}. Then,
\[
\left|\lambda_\ell - \widehat{\lambda}_{\ell,N}\right| \leq C h_{X,M},
\]
where the constant $C=\sqrt{C_1^2+C_2^2}>0$, where  
$\|\mathbf{A}-\mathbf{\widehat{A}}\|_2 \leq C_1h_{X,M}$ and $\|\mathbf{B}-\mathbf{\widehat{B}}\|_2 \leq C_2h_{X,M}$.
\end{theo}

In the result above, one can interpret the result in Lemma~\ref{spectralerrorvar} as bias error due to restricting the solution of the hypothesis space $\mathcal{W}_N$. On the other hand, the result in Lemma~\ref{spectralerrorgeneigval} corresponds to manifold learning error. The spectral gap condition is described in terms of eigenvalues of $\mathbf{A}$ and $\mathbf{B}$ corresponds to the generalized eigenvalue problem,
\BEA
\mathbf{A} \mathbf{W} = \lambda \mathbf{B} \mathbf{W}, \label{finiteproj}
\EEA
restricting the variational eigenvalue problem on the hypothesis space $\mathcal{W}_N$. 

We first note that $h_{X,M} \to 0$ as $N\to \infty$. For example, if the data are independent and identically uniformly distributed, one can show that $h_{X,M} \leq C \left(\frac{\log N}{N}\right)^{1/d}$ (see Lemma~ B.2 in \cite{harlim2023radial}). In such a context, the gap condition always be can be satisfied for sufficiently large $N$. Finally, we note that while the result above does not allow one to mathematically describe the gap condition in terms of eigenvalues of the underlying variational problem \eqref{Bocheigprob} since \eqref{finiteproj} is only a finite-dimensional projection, the bias error converges at a faster rate $h_{X,M}^2$ compared to the manifold learning error of order $h_{X,M}$, which suggests that the gap condition is representing how much manifold learning error is allowed relative to the bias error, even when the Crawford number is undefined as $N \to \infty$.

Finally, we note that the convergence of eigenvector field can be achieved with the same proof technique used our previous work \cite{harlim2023radial,peoples2026spectral}. Based on the previous results, we believe that the error of the estimated eigenvector field measured in term of the L2 norm over the point cloud data $X$ is bounded under the same error rate, $h_{X,M}$. In the next section, we will numerically verify this conjecture on several examples.  

\section{Numerical Results}
\label{numerical-results-section}
In this section, we numerically implement the local curved mesh method estimation of the Bochner and Hodge Laplacians on some example manifolds. In this study, we numerically demonstrate the convergence of eigenvalues and eigenvectors of the generalized eigenvalue problem in \eqref{gen-eigvalprob} obtained from randomly sampled data to the analytic eigenvalues and eigenvector fields of the Bochner and Hodge Laplacians on the 2D sphere and the latter on a 2D torus. In Section \ref{numerical-sphere-subsection}, we demonstrate the convergence of eigenvalues and eigenvector fields for both the Hodge and Bochner Laplacian on the sphere, since the analytic eigenvalues and eigenvector-fields are known. Since analytic eigensolutions for the Bochner and Hodge Laplacian on a torus are unknown, we demonstrate the convergence of eigenvalues of the estimated Hodge Laplacian to semi-analytic solutions in Section \ref{numerical-torus-subsection}. Finally, since in practical situations, one often has access to noisy observations that do not lie precisely on a manifold, we numerically investigate the robustness of this algorithm to noise in Section \ref{numerical-noise-subsection}. For convenience, we refer to the estimations obtained from the curved mesh method by CMM.  
\subsection{Bochner and Hodge Laplacian Estimations on Sphere}
\label{numerical-sphere-subsection}

In this section, we demonstrate the numerical convergence of the Hodge and Bochner Laplacians on the $2D$ unit sphere, $M = \{x \in \mathbb{R}^3 : \| x \| = 1 \}$, embedded in $\mathbb{R}^3$. To simulate uniform random sampling on this surface, we take the following approach: first, we generate a vector in $\mathbb{R}^3$ via $\vec{w}_i \sim \mathcal{N}(\vec{0}, \mathbf{I})$. We then normalize each sample: $x_i = \vec{w}_i / \| \vec{w}_i \|$. An example of the resulting dataset is shown in Figure \ref{data-figure}a.

\begin{figure}[htbp]
{\scriptsize \centering
\begin{tabular}{ccc}
\includegraphics[width=.3\textwidth]{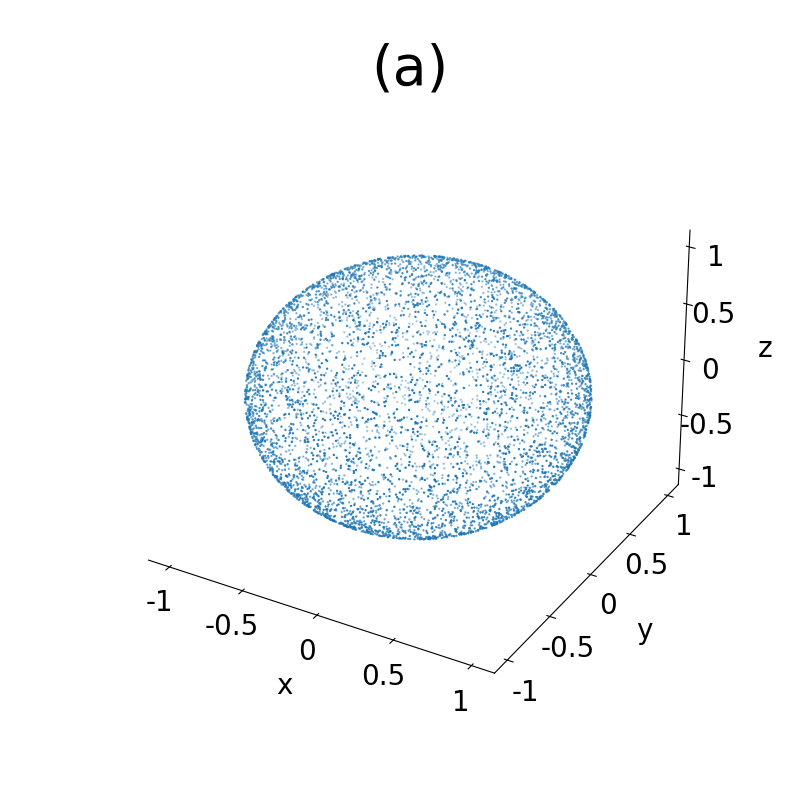} &
\includegraphics[width=.3\textwidth]{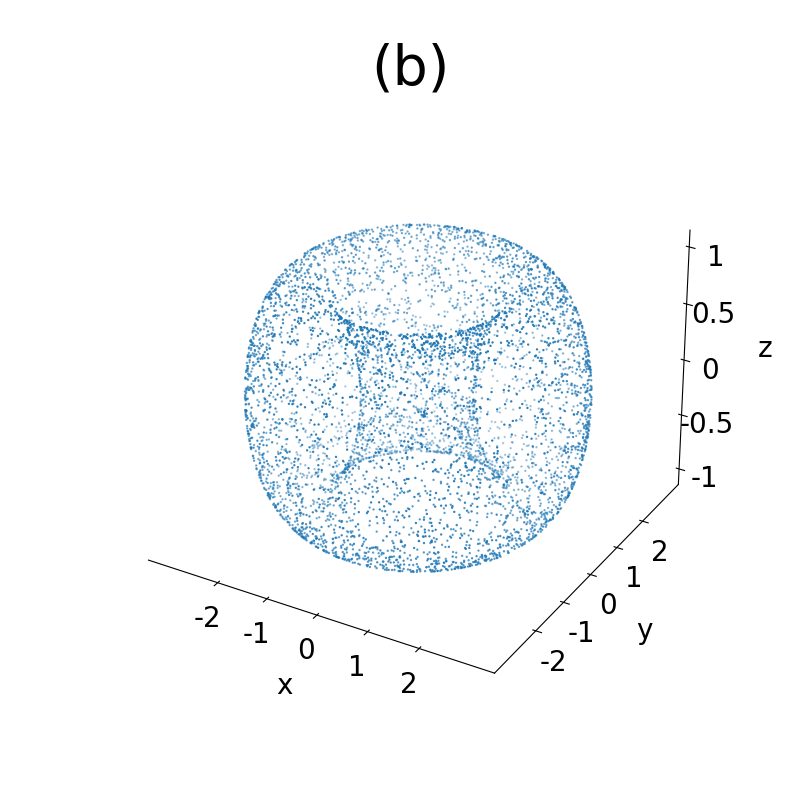} & 
\includegraphics[width=.3\textwidth]{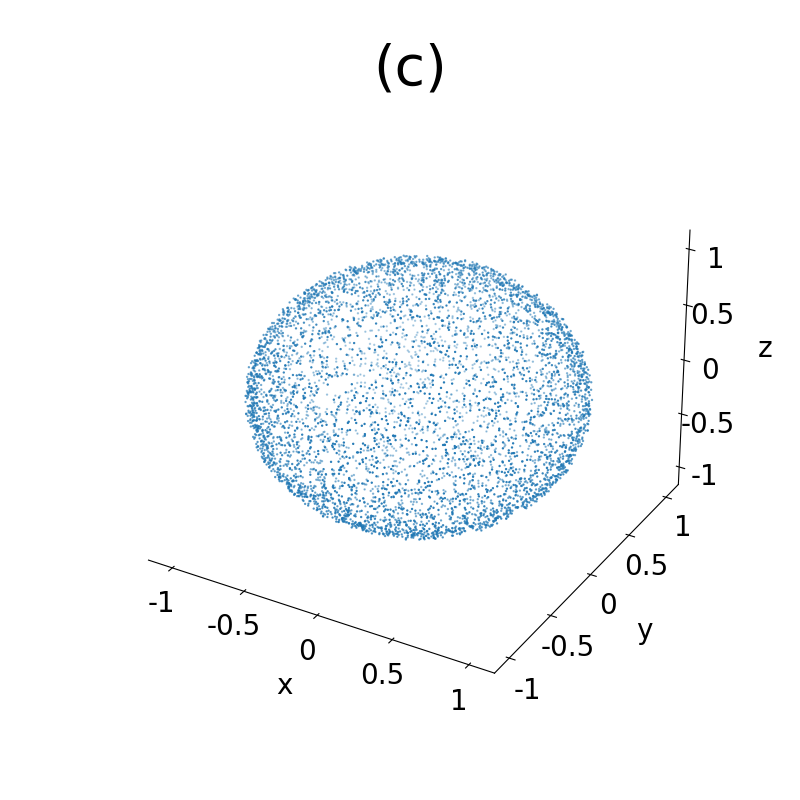}
\end{tabular} }
\caption{Example datasets used for numerical experiments. Displayed is a single trial with $N=4000$ points for (a) sphere (b) torus, and (c) noisy sphere (each data point is perturbed in the normal direction with uniform noise of size $0.10$).}
\label{data-figure}
\end{figure}

Analytic eigensolutions for the Bochner and Hodge Laplacians on this manifold are well known. For a detailed description, see, for instance Section 5.4 of \cite{harlim2023radial}. To summarize, the distinct eigenvalues of the Hodge and Bochner are given by the following:
\BEA 
\textup{ Hodge: }\quad \lambda_\ell &=& (\ell + 1)\ell, \notag \\
\textup{ Bochner: }\quad \lambda_\ell &=& (\ell + 1)\ell - 1, \qquad \ell \in \mathbb{N}^{+}. \notag 
\EEA 
Since the sphere is of constant curvature, the Hodge and Bochner Laplacians share eigenvector fields. Eigenvector fields for the first $3$ distinct eigenspaces are given by 
$\left\{ \mathbf{n} \times \nabla_{\mathbb{R}^3} f, \quad \mathbf{P} \nabla_{\mathbb{R}^3} f \right\},$
where $\mathbf{n}$ denotes the outward facing normal vector, $\mathbf{P}$ denotes the tangential projection tensor projecting $(\nabla_{\mathbb{R}^3} f)(x)$ to $T_xM$, and $f$ is given by one of the spherical harmonics:
\BEA 
f(\vec{v}) &=& (\vec{v})_i, \quad \textup{ for } \ell = 1, \notag \\
f(\vec{v}) &=& 3(\vec{v})_i(\vec{v})_k - \delta_{ik}, \quad \textup{ for } \ell = 2, \notag \\
f(\vec{v}) &=& 15(\vec{v})_i(\vec{v})_j(\vec{v})_k - 3\delta_{ij}(\vec{v})_k - 3\delta_{ki}(\vec{v})_j - 3\delta_{jk}(\vec{v})_i, \quad \textup{ for } \ell = 3, \notag 
\EEA 
where $i,j,k \in \{1,2,3\}$. We remark that the above formulas imply the dimension of the first three eigenspaces, where color indicates pointwise vector norm.

\begin{figure}[htbp]
{\scriptsize \centering
\begin{tabular}{cc}
\includegraphics[width=.45\textwidth]{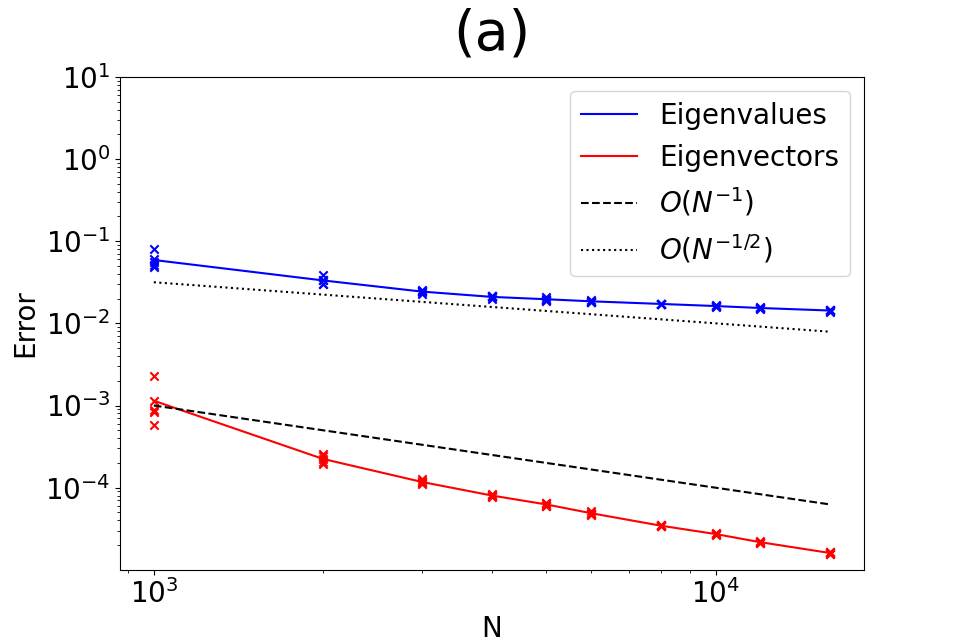} &
\includegraphics[width=.45\textwidth]{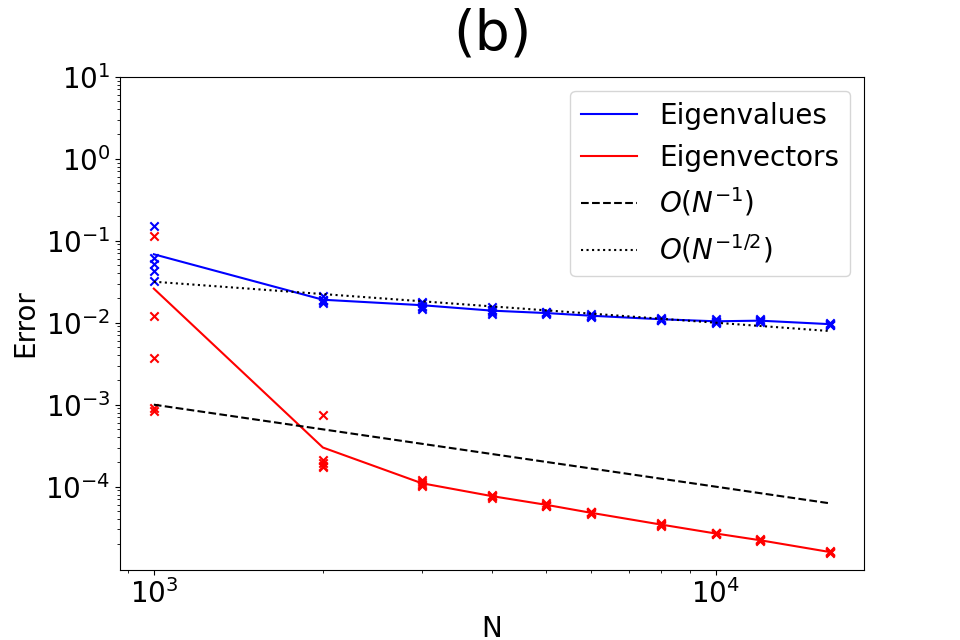}
\end{tabular} }
\caption{Operator estimation on sphere with uniform sampling distribution. Average of relative error (eigenvalues, leading $L=48$ modes) and Average of mean-square-error (eigenvectors, leading $L=6$ modes) as defined in Equation \eqref{eigenvalue-metric}, \eqref{eigenvector-metric} respectively: (a) Bochner Laplacian; (b) Hodge Laplacian. } 
\label{sphere-convergence-figure} 
\end{figure}

Figure \ref{sphere-convergence-figure} displays the convergence of CMM for the estimation of eigenvalues and eigenvector fields of the Bochner and Hodge Laplacians as functions of $N$. The error metrics reported for convergence are given by 
\BEA 
\textup{Error of eigenvalues} &:=& \frac{1}{L} \sum_{j=1}^L \frac{\left|\lambda_j - \widehat{\lambda}_{j,N}\right|}{\lambda_j} \label{eigenvalue-metric} \\
\textup{Error of eigenvector fields} &:=& \frac{1}{NL} \sum_{i=1}^N \sum_{j=1}^L  \left\| W^{(j)}(x_i) - \mathbf{\widehat{W}}^{(j)}(x_i) \right\|^2,
\label{eigenvector-metric}
\EEA 
where $W^{(j)}(x_i)$ denotes the $j$-th analytic vector field described above, and $\mathbf{\widehat{W}}^{(j)}$ denotes the least-squares linear combination of eigenvectors of the CMM estimator associated with the corresponding estimated eigenspace. We note that the above enumeration of eigenvalues is not distinct (i.e., includes multiplicities). In Figure \ref{sphere-convergence-figure}a, results for estimating the Bochner Laplacian are shown, while in Figure \ref{sphere-convergence-figure}b the corresponding results for the Hodge Laplacian are shown. We remark that, in either case, Theorem~\ref{maintheorem} is verified with a convergence rate of $O(N^{-1/d}).$ In particular, one can show (see Appendix B, Lemma B.2 of \cite{harlim2023radial}) that as $N \to \infty$, $h_{X,M} = O(N^{-1/d})$ almost surely, and $d=2$ in our case. As for the eigenvector fields, the empirical convergence rates agree with our conjectures. For visual comparison, Figure \ref{sphere-visual-comparison-figure} shows estimations (CMM estimation of Boehner Laplacian) of the first $4$ eigenvector fields from a single trial with $N=6000$. Overall, this example demonstrates that CMM can be used to accurately approximate multiple operators acting on vector fields, with competitive convergence rates. 

\begin{figure}[htbp]
{\scriptsize \centering
\begin{tabular}{ccccc}
\normalsize (a) CMM, mode=1  & \normalsize (b) CMM, mode=2 & \normalsize (c) CMM, mode=3 & \normalsize (d) CMM, mode=4 \\
\includegraphics[width=.2\textwidth]{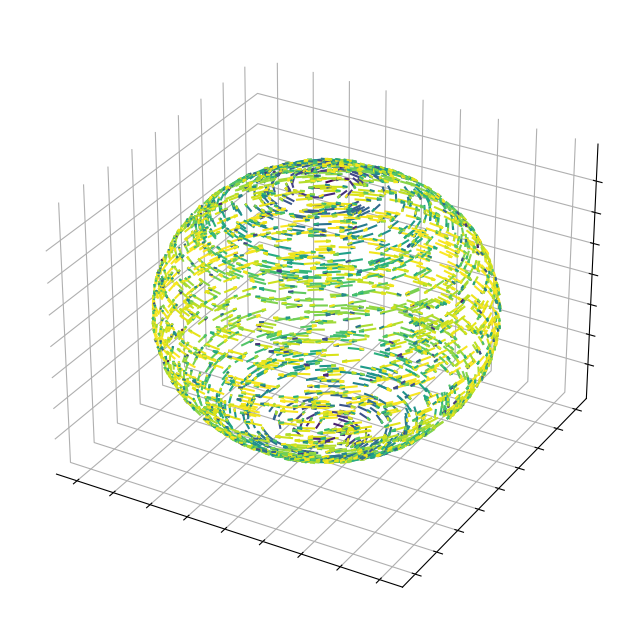} &
\includegraphics[width=.2\textwidth]
{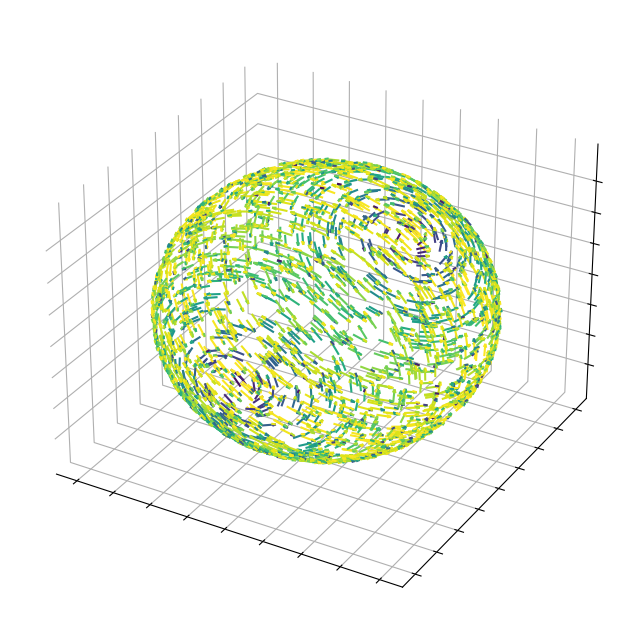} & 
\includegraphics[width=.2\textwidth]{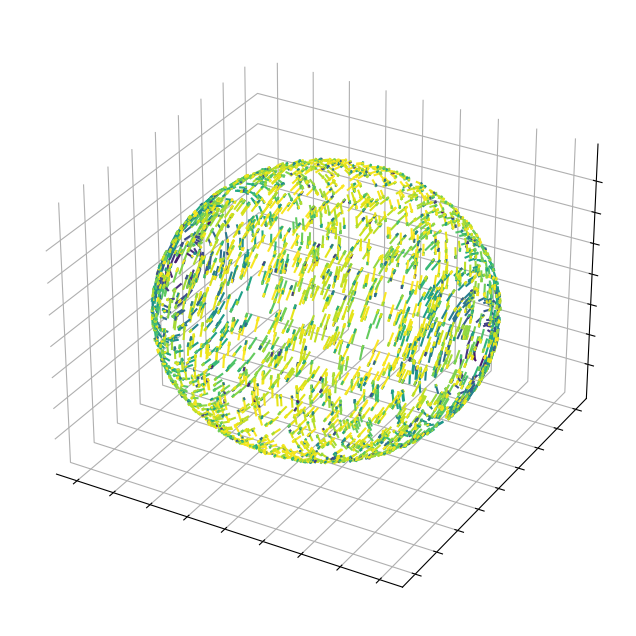} &
\includegraphics[width=.2\textwidth]
{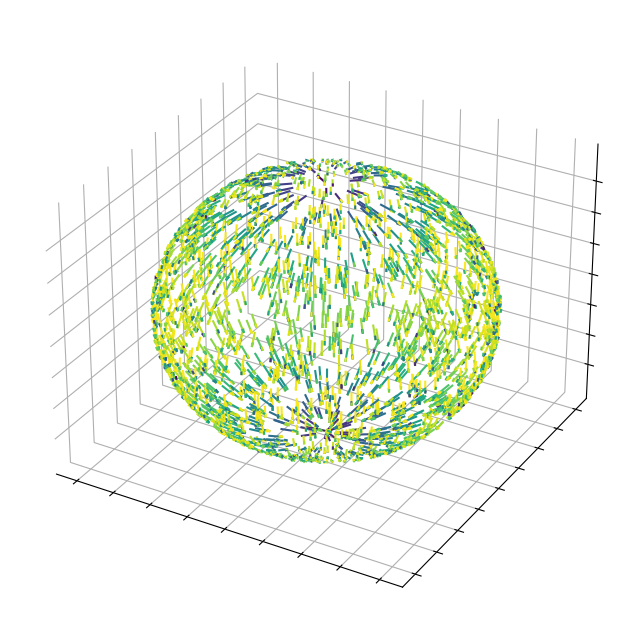} \\
\normalsize (e) truth, mode=1  & \normalsize (f) truth, mode=2 & \normalsize (g) truth, mode=3 & \normalsize (h) truth, mode=4 \\
\includegraphics[width=.2\textwidth]{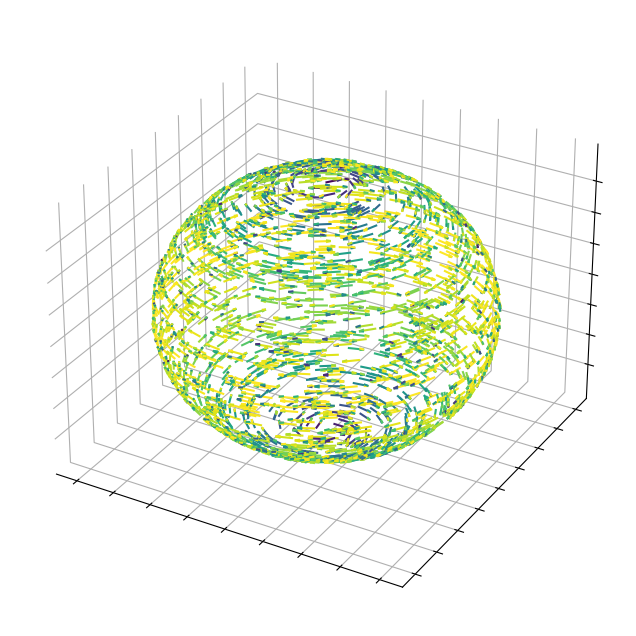} &
\includegraphics[width=.2\textwidth]
{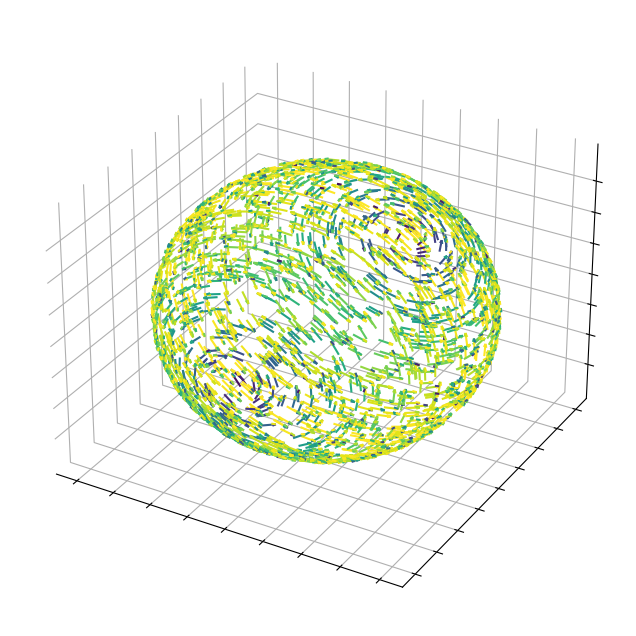} & 
\includegraphics[width=.2\textwidth]{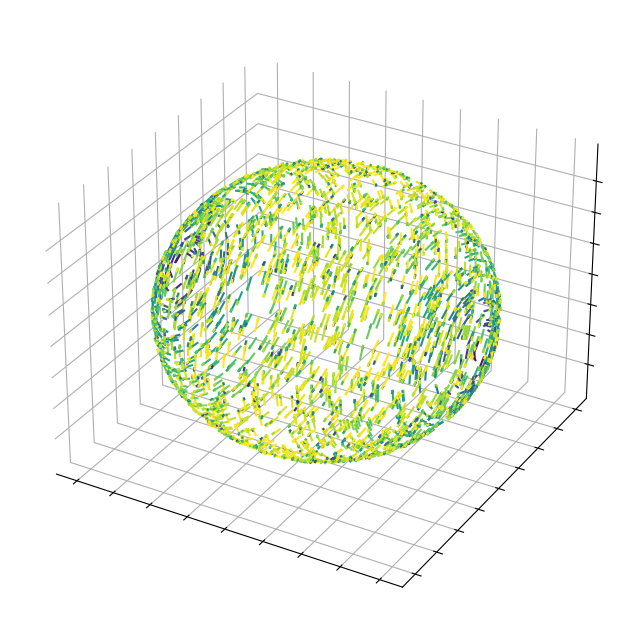} &
\includegraphics[width=.2\textwidth]
{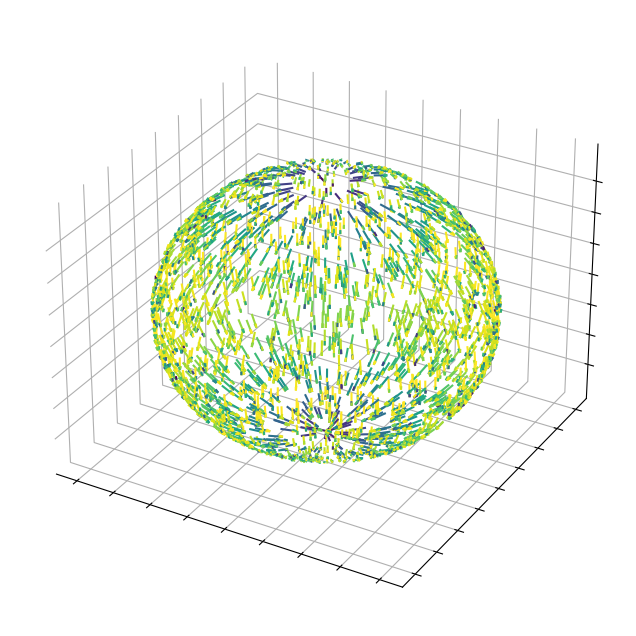} 

\end{tabular} }

\caption{ Visual comparison of the first four eigenvector fields for the Bochner Laplacian obtained using CMM (row 1) and analytic methods (row 2). The estimates are obtained from a single trial of $N=6000$ randomly sampled points. Color indicates vector norm. Vectors visualized correspond to on a random subset of size $2000$. }
\label{sphere-visual-comparison-figure}
\end{figure}

\subsection{Hodge Laplacian Estimation on Torus}
\label{numerical-torus-subsection}
\begin{figure}[htbp]
{\scriptsize \centering
\begin{tabular}{cc}
\includegraphics[width=.45\textwidth]{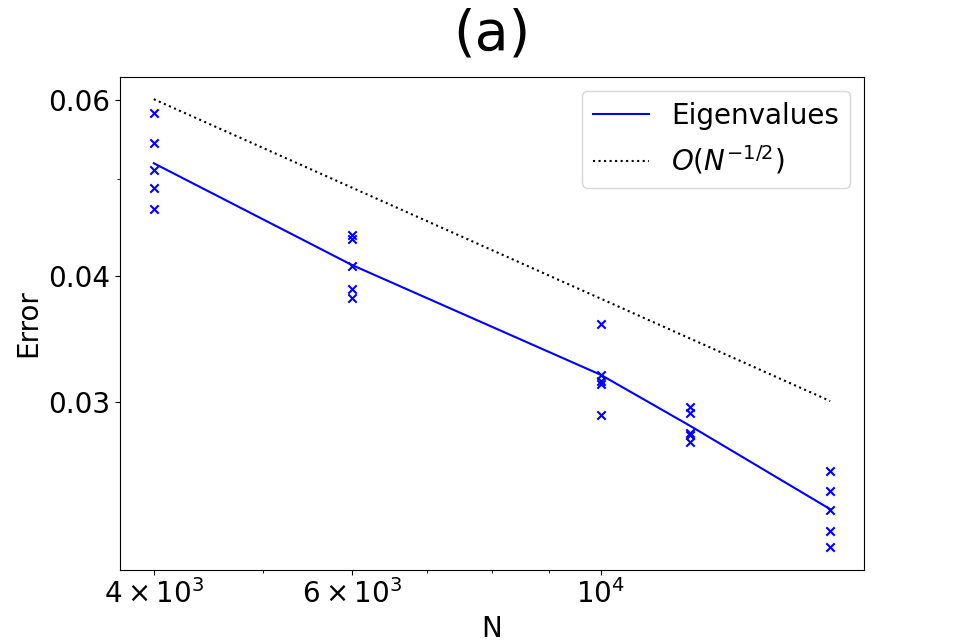} &
\includegraphics[width=.45\textwidth]{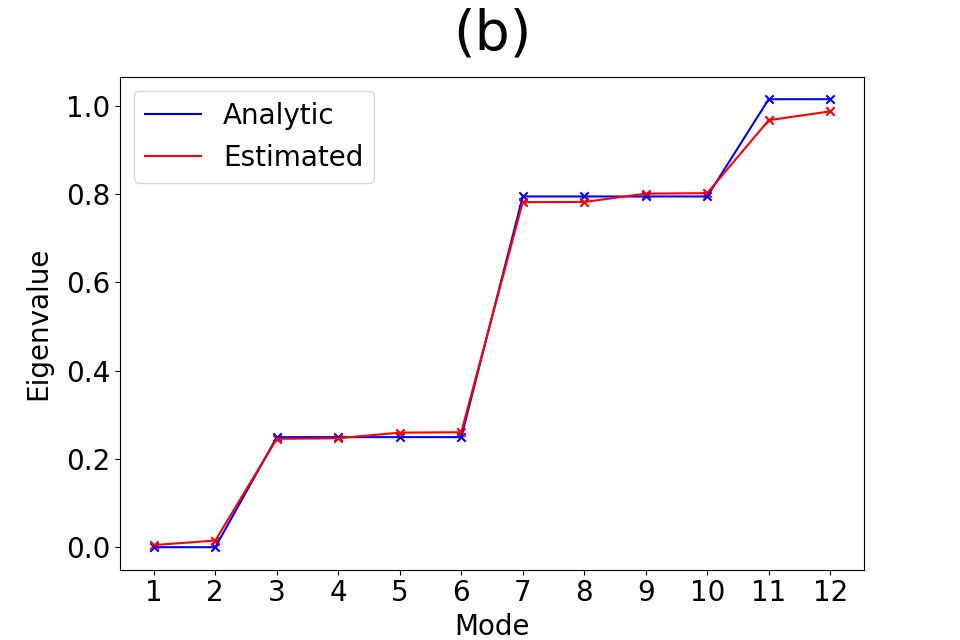} 
\end{tabular} }
\caption{Operator estimation on torus with uniform sampling (with rejection) distribution. (a) Average of relative error (over the first $10$ nontrivial) eigenvalues of the Hodge Laplacian estimator obtained from CMM as a function of $N$. (b) Mode by mode estimation for of spectrum for a single trial with $N=16000$.}
\label{torus-convergence-figure}
\end{figure}
In this section, we demonstrate numerical convergence of CMM on the $2D$ torus. In particular, consider 
$ 
M = \Bigg\{\left( (2 + \cos \theta ) \cos \phi, (2 + \cos \theta ) \sin \phi, \sin \theta  \right)  : 0 \leq \theta, \phi \leq 2 \pi  \Bigg\}.
$ 
To sample uniformly from the torus, we impose a rejection sampling scheme. Namely, we sample a pair $(\theta_i, \phi_i) \sim \textup{Unif}([0, 2\pi]^2)$, along with a rejection parameter $w_i \sim \textup{Unif}([0,1])$. We accept the pair $(\theta_i, \phi_i)$ if  $w_i \leq (2/3 + \cos \theta_i/3)$, in which case we obtain a point $x_i= \left( (2 + \cos \theta_i ) \cos \phi_i, (2 + \cos \theta_i ) \sin \phi_i, \sin \theta_i  \right)$. Otherwise, we reject. A dataset of size $N=$ generated in this way is shown in Figure \ref{data-figure}b.  

We remark that, on the torus, analytic eigen-solutions to the eigenvalue problem are not available for the Bochner and Hodge Laplacians. However, we can leverage the following result, along with semi-analytic eigen-solutions for the Laplace-Beltrami operator, to obtain semi-analytic eigenvalues for the Hodge Laplacian.
\begin{theo}
   Let $M$ denote a $2-$dimensional manifold embedded in $\mathbb{R}^3$. Then the nontrivial eigenvalues of the Hodge Laplacian are identical to the nontrivial eigenvalues of the Laplace-Beltrami operator where the multiplicities of eigenvalues of the Hodge Laplacian doubles those of the Laplace-Beltrami operator.
\end{theo}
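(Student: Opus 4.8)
Since a compact surface embedded in $\mathbb{R}^3$ is orientable, $M$ is a closed oriented Riemannian surface, and the statement concerns the Hodge Laplacian $\Delta_1 = d d^\star + d^\star d$ on $1$-forms; under the metric isomorphism $\flat: TM \to T^*M$ this corresponds to the operator on vector fields used in Section~\ref{weak-operator-learning}, and since $\flat$ is a pointwise isometry it induces an $L^2$-isometry intertwining the two, so it suffices to argue with $1$-forms. The plan is to invoke the Hodge decomposition
\[
\Omega^1(M) \;=\; \mathrm{im}\, d \;\oplus\; \mathrm{im}\, d^\star \;\oplus\; \mathcal{H}^1(M),
\]
an $L^2$-orthogonal direct sum in which each summand is $\Delta_1$-invariant, $\mathcal{H}^1(M) = \ker \Delta_1$ is finite-dimensional, and the restriction of $\Delta_1$ to each summand has discrete spectrum; then to identify that spectrum summand by summand.

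The harmonic part $\mathcal{H}^1(M)$ is by definition the $0$-eigenspace (its dimension is the first Betti number, $0$ for the sphere and $2$ for the torus), so it contributes only the trivial eigenvalue. On the exact part I would use the identity $\Delta_1(df) = d(\Delta_0 f)$, where $\Delta_0$ is the Laplace-Beltrami operator on functions: if $\Delta_1(df) = \lambda\, df$ with $\lambda > 0$ and $f$ chosen $L^2$-orthogonal to the constants, then $d(\Delta_0 f - \lambda f) = 0$, so $\Delta_0 f - \lambda f$ is constant, and integrating over $M$ forces this constant to vanish, whence $\Delta_0 f = \lambda f$; conversely $d$ carries $\lambda$-eigenfunctions of $\Delta_0$ to $\lambda$-eigenforms of $\Delta_1$. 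Thus $f \mapsto df$ is a linear isomorphism from the $\lambda$-eigenspace of $\Delta_0$ onto the $\lambda$-eigenspace of $\Delta_1$ restricted to $\mathrm{im}\, d$, for every $\lambda > 0$, so $\Delta_1$ on the exact forms has exactly the nonzero spectrum of $\Delta_0$ with identical multiplicities.

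The coexact part $\mathrm{im}\, d^\star$ is the step requiring the most care, and I would treat it with the Hodge star $\star$. On an oriented Riemannian surface, $\star$ restricts to an $L^2$-isometry of $\Omega^1(M)$ with $\star^2 = -\mathrm{id}$ there, it commutes with $\Delta_1$, and a short computation using $d^\star = -\star d\, \star$ on $2$-forms in dimension two gives $\star(df) = \pm\, d^\star(\star f)$; hence $\star$ maps $\mathrm{im}\, d$ bijectively onto $\mathrm{im}\, d^\star$ while intertwining $\Delta_1|_{\mathrm{im}\, d}$ with $\Delta_1|_{\mathrm{im}\, d^\star}$. Therefore $\Delta_1$ on the coexact forms carries a second copy of the nonzero spectrum of $\Delta_0$, again with the same multiplicities. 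Assembling the three summands shows that the nonzero eigenvalues of $\Delta_1$ coincide with those of $\Delta_0$ and that each has $\Delta_1$-multiplicity exactly twice its $\Delta_0$-multiplicity, which is the claim. The only genuine obstacles are bookkeeping ones: fixing the sign and normalization conventions so that $\star$ truly commutes with $\Delta_1$ and truly sends exact forms to coexact forms, and citing Hodge/elliptic theory for the orthogonal $\Delta_1$-invariant splitting and the discreteness of the relevant spectra.
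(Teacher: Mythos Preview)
Your proof is correct and follows the standard route via the Hodge decomposition together with the Hodge star isometry on $1$-forms; the paper does not supply its own argument but simply cites \cite{harlim2023radial} for the details, and what you have written is essentially that classical argument. One minor remark: your identification of coexact $1$-forms can be stated more directly by noting that on a surface every $2$-form is $g\,d\mathrm{Vol}$ for some function $g$, so $d^\star(g\,d\mathrm{Vol}) = -\star dg$, whence $\mathrm{im}\,d^\star = \star(\mathrm{im}\,d)$ immediately and the sign bookkeeping you flag as a concern becomes trivial.
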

The above is a standard result. A detailed proof can be found in \cite{harlim2023radial}. Hence, to evaluate eigenvalue error, we combine the above result with semi-analytic eigen-solutions to the Laplace-Beltrami eigenvalue problem on the torus (following \cite{harlim2023radial}). In particular, the Laplace-Beltrami eigenvalue problem on the 2D torus can be written in intrinsic coordinates as,
\BEA 
\Delta_M f_\ell  = - \frac{1}{(2 + \cos \theta)^2}\frac{\partial^2 f_\ell}{\partial \phi^2} - \frac{\partial^2 f_\ell}{\partial \theta^2} + \frac{\sin \theta }{2 + \cos \theta} \frac{\partial f_\ell}{\partial \theta} = \lambda_\ell f_\ell. \notag 
\EEA
Using separation of variables (assuming that $f_\ell(\theta, \phi) = \Phi_\ell (\phi) \Theta_\ell (\theta)$, we reduce the above to the following problems:
\BEA 
\Phi_\ell '' = m_\ell \Phi_\ell, \quad \Theta_\ell'' - \frac{\sin \theta}{2 + \cos \theta} \Theta'_\ell - \frac{m^2_\ell}{(2 + \cos \theta)^2} \Theta_\ell = \lambda_\ell \Theta_\ell. \notag 
\EEA 
The first can be solved analytically, while the second is solved numerically using a standard finite difference scheme on a fine grid. The resulting values for $\lambda_\ell$ are used to evaluate the error in Figure \ref{torus-convergence-figure}a. Here, we demonstrate the relative error in the estimation of the eigenvalues of the Hodge Laplacian estimated using CMM which numerically verifies the convergence reported in Theorem \ref{maintheorem}.  Figure \ref{torus-convergence-figure}b shows the semi-analytic and estimated spectrum mode-by-mode for a single trial with $N=16000$. CMM produces an accurate estimation, especially for modes 0-9.

\subsection{Robustness to Noise}
\label{numerical-noise-subsection}
To investigate the effect of noise, we turn again to the unit sphere example.
Consider the dataset of points $N$ given by $x_i = (1+\epsilon_i) \tilde{x}_i$ where $\tilde{x}_i \sim \textup{Unif}(M)$, and $\epsilon_i \sim \textup{Unif}([-\eta/2, \eta/2])$.  Here, $\textup{Unif}(M)$ denotes uniform sampling with respect to the volume measure of the manifold. To generate such samples, we follow the approach of Section \ref{numerical-sphere-subsection}. Numerical results for the computation of eigenvalues and eigenvectors for various values of $\eta$ is shown in Figure \ref{noisy-convergence-figure}. 

\begin{figure}[htbp]
{\scriptsize \centering
\begin{tabular}{cc}
\includegraphics[width=.45\textwidth]{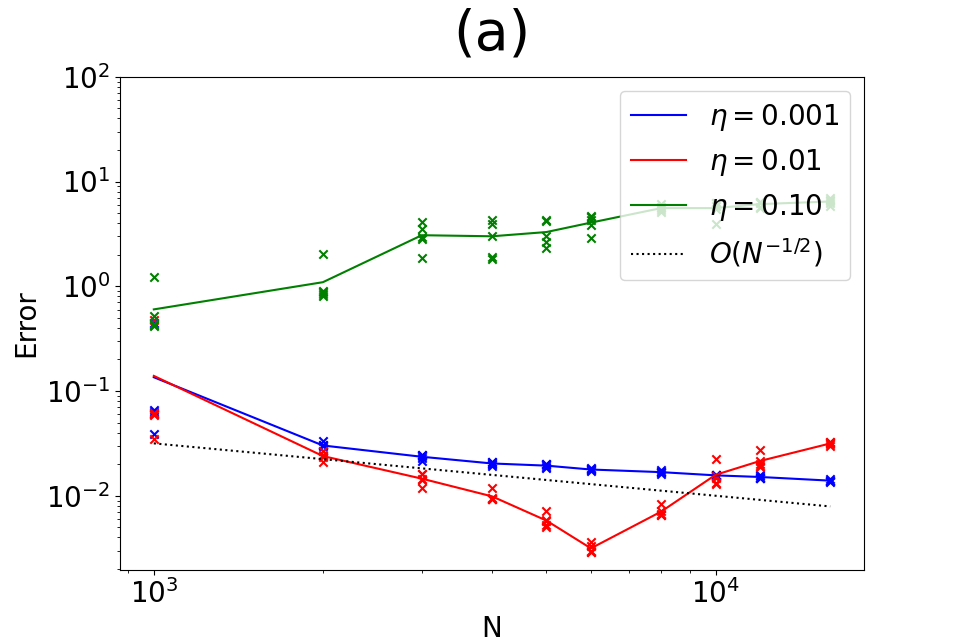} &
\includegraphics[width=.45\textwidth]{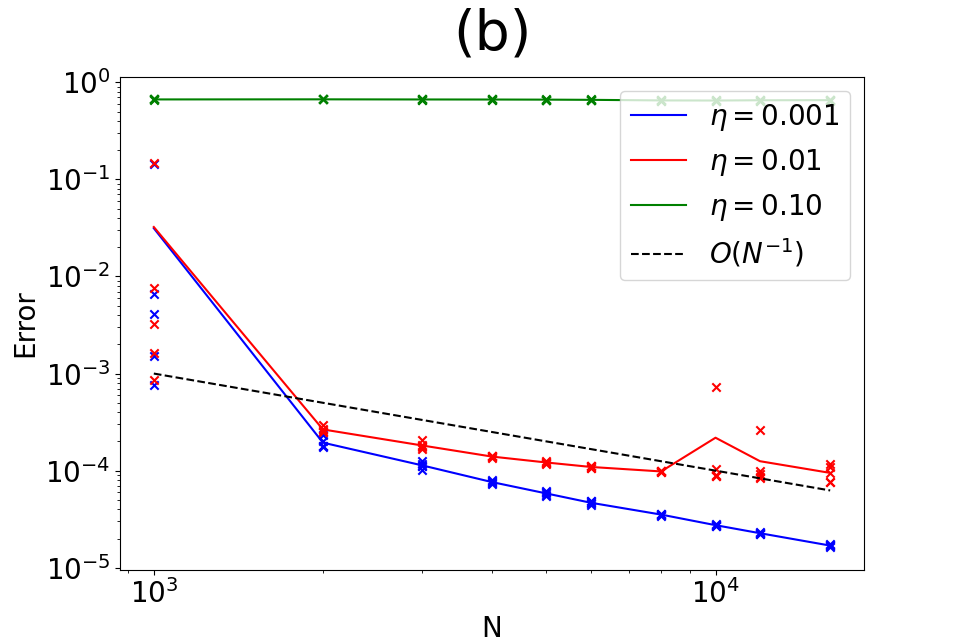}
\end{tabular} }
\caption{Operator estimation on a noisy sphere with uniform sampling distribution as functions of $N$. (a) Mean of percent error of eigenvalues. (b) Mean of mean-square-error of eigenvectors. Metrics are as defined in Equations \eqref{eigenvalue-metric} and \eqref{eigenvector-metric} Color indicates noise level. }
\label{noisy-convergence-figure}
\end{figure}
We see that when the noise is sufficiently small (0.1\% of the radius of the sphere), convergence for both eigenvalues and eigenvectors is still realized. When the noise is increased to 1.0\%, while convergence still occurs for the eigenvector fields (red, Figure \ref{noisy-convergence-figure}b), eigenvalue convergence for large $N$ does not occur, though the error in this case is still relatively small. When the noise is increased to 10.0\%, the results for both the eigenvalues and eigenvectors are inaccurate, and do not convergence. Based on these results as well as the theoretical analysis, we suspect that the algorithm is robust to noise so long as the size of the noise ($\| x_i - \tilde{x_i}\|$ in this example) does not exceed the error induced by this numerical approximation. 

Given this analysis, there are many interesting future directions for modifying CMM to be more robust to noise. For instance, CMM hinges on the accurate estimation of tangent spaces (Section \ref{local-tangent-space-subsection}), which is in general sensitive to noise. One could employ more robust methods for approximating the tangent space on potentially noisy data (see, for instance, \cite{candes2011robust,zhan2011robust}) and investigate how this affects the spectral error.

\section{Summary}
In this paper, we developed a numerical framework for approximating arbitrary differential operators in the weak form on Riemannian manifolds identified only by randomly sampled point cloud data. We proved, for the case of estimating the Bochner Laplacian, the convergence of eigenvalues, and provided several numerical examples validating this convergence in practice. This framework employs curved (nonlinear) local meshes, which are constructed by projecting point cloud data to the estimated tangent spaces \cite{lai2013local} and the moving least squares parameterization \cite{liang2013solving}, thus, allowing the curvature information such as Christoffel symbols to be appropriately accounted in the operator estimation. This approach is convenient as it avoids the curse of high ambient dimension, the construction of global parameterizations and meshes, and the tuning of various hyperparameters. Moreover, this method is flexible and is not restrictive to estimating only Laplacians. Theoretical and observed convergence rates, which for uniformly sampled data coincide with $O(N^{-1/d})$ for $d \geq 2$, are competitive with other estimators in the literature in the context of eigenvalue estimation for operators acting on vector fields on unknown manifolds. For instance, the spectral convergence of the RBF approximation to Bochner Laplacian was reported in \cite{harlim2023radial} with convergence rate $O\left(N^{-1/d}\right)$. As for the Graph Laplacian type method, the spectral convergence for the vector diffusion maps to the Bochner (or connection) Laplacian was established with no convergence rate as reported in \cite{singer2017spectral}. Instead, they reported the pointwise convergence to
the Bochner Laplacian acting on vector fields with convergence rate $O\left(N^{-\frac{1}{d/2 + 4}}\right)$.

The ideas proposed in this paper suggest many potential directions for future work. First, the theoretical work in this paper assumes that the underlying manifold is without boundary, however, the weak formulation is flexible for complex geometry with arbitrary boundary conditions. Nevertheless, details regarding local parameterization error analysis for points near the boundary remain to be seen. Second, throughout this manuscript, we emphasize on solving 1-Laplacian eigenvalue problems. The proposed formulation offers a generic method that can be applied to approximate arbitrary covariant derivatives on general tensor fields for PDE modeling applications. Lastly, as we have shown in the last section, more work needs to be done to overcome noisy data.

\section*{Acknowledgment}

The research of J.H. was partially supported by the NSF grants MS-2505605 the ONR grant N00014-22-1-2193.

\appendix
\section{Example Computations for Local Curved Mesh Method} \label{computation-appendix}
Throughout this appendix, local computations are done in terms of the metric computed in Section \ref{curved-mesh-metric}. For simplicity, we refer to the coordinates of this metric, the entries of this metric, the entries of its inverse, and Christoffel symbols in this metric as $\{ \partial_{u_1}, \partial_{u_2} \}$, $g_{ij}, g^{ij}, $ and $\Gamma^i_{jk}$, respectively. Explicit formulas for all quantities above can be deduced for the metric \eqref{cmm-metric-equation}.  For simplicity, we present the computations for $d=2$. 

The following notation will be neccessary throughout. Fix a block $i$. Let $\mathbf{T} = [\mathbf{t}^{(i)}_1, \mathbf{t}^{(i)}_2, \dots , \mathbf{t}^{(i)}_n]$, the matrix whose columns form an orthonormal basis for the tangent space (first $d$ components) and normal directions (last $n-d$ components) at $i$. Similarly, fix an arbitrary triangle $T_{i,r} \in R(i)$, and let $\mathbf{R}_{i,r} = [\partial_{u_1}, \partial_{u_2}]$ denote the matrix whose columns form a tangent space for the local curved mesh of triangle $T_{i,r}$, as defined in \eqref{cmm-basis-equation}. We remark that $\mathbf{R}_{i,r}$ depends on the point $(u_1,u_2)$. Define  

\BEA  
\label{inner-product}
\mathbf{a}_k^{ij} = \mathbf{R}^{LS}_{i,r} \mathbf{T}^\top \mathbf{t}^{(j)}_k, 
\EEA 
the vector of length $d$ corresponding to the representation of $\mathbf{t}^{(j)}_k$ in the local coordinates, where the dependence of $\mathbf{a}^{ij}_k$ on $T_{i,r}$ is understood.
We note that $\mathbf{R}^{LS}_{i,r}= ( \mathbf{R}_{i,r}^\top \mathbf{R}_{i,r})^{-1}\mathbf{R}_{i,r}^\top$ is the least-squares solution for a map that takes the tangent vector $\mathbf{t}^{(j)}_k$ in the coordinates induced by columns of $\mathbf{T}$ to the canonical $\BR^2$ coordinates, which we denoted as $\mathbf{a}_k^{ij}$. With this representation, its clear that 
\BEA
\langle \mathbf{t}^{(i)}_{k_1}, \mathbf{t}^{(j)}_{k_2}   \rangle_{g} = \langle g \mathbf{a}^{ii}_{k_1} , \mathbf{a}^{ij}_{k_2} \rangle = \left(\mathbf{a}^{ii}_{k_1} \right)^\top g \left(\mathbf{a}^{ij}_{k_2}\right), \label{inner_product}
\EEA
as noted in ~\eqref{cmm-vector-field-innerproduct} in Remark~\ref{parallel_transport_remark}, where $g = g_{\Phi_{C_{i,r}}}$ corresponds to the $2\times 2$ metric tensor defined in \eqref{cmm-metric-equation}. 

In practice, since a smooth vector field $W: M \to TM$ can be represented locally on a neighborhood of $T_{x_i}M$, and for sufficiently large $N$ (such that the curvature between neighboring points of data is sufficiently small), $R(i)$ is contained in this neighborhood, instead of $\mathbf{T}$, one can use $\mathbf{T}_i = [\mathbf{t}^{(i)}_1, \dots , \mathbf{t}^{(i)}_d]$, and subsequently ignore the last $(n-d)$ columns of $\mathbf{R}_{i,r}$, which reduces computation time as the first $d$ rows of $\mathbf{R}_{i,r}$ do not depend on the point $(u_1,u_2)$; see \eqref{cmm-basis-equation}. We observe the difference in estimated eigenvalues and eigenvector fields when using the full matrices as in \eqref{inner-product} and the reduced approach outlined above is negligible. 

We remark that computations corresponding to off-diagonal matrix elements simplify significantly. In particular, integrals corresponding to the $(i,j)$-th entry for $i \neq j$ vanish for each $T_{i,r} \in R(i)$ except those for which the edge connecting  $ \vec{0}$ and $ \mathbf{T}_i^\top (x_j-x_i) $, is a side of $T_{i,r}$. Since $T_{i,r}$ is a triangle with vertices $\{\vec{0}, \vec{v}_{i_{j_r}}, \vec{v}_{i_{k_r}}\}$, this is simply stating that $ \mathbf{T}_i^\top (x_j-x_i)$ is a node of $T_{i,r}$. Therefore, for future convenience, we denote 
\[
Q(i, j) = \{ T_{i,r} \in R(i) : \mathbf{T}_i^\top (x_j-x_i) \textup{ is a node of } T_{i,r} \}.
\]

\subsection{Mass matrix for operators on vector fields}
\label{mass-matrix-computation-appendix}
Here we explicitly compute the mass matrix for vector-fields, $\mathbf{M}$. The diagonal block formula from before reads
$$
\begin{bmatrix} [\mathbf{M}]_{2i-1, 2j -1} & [\mathbf{M}]_{2i-1,2j} \\ [\mathbf{M}]_{2i, 2j-1} & [\mathbf{M}]_{2i,2j} \end{bmatrix} = \sum_{T_{i,r} \in R(i)} \int_{T}  e_i e_j
\begin{bmatrix} 
 \langle \mathbf{t}^{(i)}_{1}, \mathbf{t}^{(j)}_{1}   \rangle_{g}  &  \langle \mathbf{t}^{(i)}_{1}, \mathbf{t}^{(j)}_{2}   \rangle_{g} \\
 \langle \mathbf{t}^{(i)}_{2}, \mathbf{t}^{(j)}_{1}   \rangle_{g} &  \langle \mathbf{t}^{(i)}_{2}, \mathbf{t}^{(j)}_{2}   \rangle_{g}
\end{bmatrix} \sqrt{\det(g)} du_1 du_2,
$$
Since we compute each integral with a quadrature rule on the vertices, it suffices to compute the integrand at an arbitrary point $(u_1$, $u_2)$. 

To begin, we compute the diagonal block components. When $i=j$, we have
\BEA 
\begin{bmatrix} [\mathbf{M}]_{2i-1, 2i -1} & [\mathbf{M}]_{2i-1,2i} \\ [\mathbf{M}]_{2i, 2i-1} & [\mathbf{M}]_{2i,2i} \end{bmatrix} = \sum_{T_{i,r} \in R(i)} \int_{T}  (1 - u_1 - u_2)^2
\begin{bmatrix} 
 \langle  g \mathbf{a}^{ii}_{1}, \mathbf{a}^{ii}_{1}   \rangle  &  \langle g \mathbf{a}^{ii}_{1}, \mathbf{a}^{ii}_{2}   \rangle \\
 \langle g \mathbf{a}^{ii}_{2}, \mathbf{a}^{ii}_{1}   \rangle &  \langle  g \mathbf{a}^{ii}_{2}, \mathbf{a}^{ii}_{2}   \rangle
\end{bmatrix} \sqrt{\det(g)} du_1 du_2, \notag 
\EEA
where we have used \eqref{inner_product}. We remark that the quantities above indeed depend on each triangle $T_{i,r}$. 

When $j \neq i$, we recall that the only two triangles for which the integration is nonzero are those in $Q(i,j)$ (which consists of $0$ or $2$ triangles). In both of these triangles, we can reorder the vertices so that $(1,0) \to \mathbf{T}_i^\top (x_j-x_i)$. Hence, a nonzero of diagonal block of the mass matrix takes the form 
\BEA 
\begin{bmatrix} [\mathbf{M}]_{2i-1, 2j -1} & [\mathbf{M}]_{2i-1,2j} \\ [\mathbf{M}]_{2i, 2j-1} & [\mathbf{M}]_{2i,2j} \end{bmatrix} = \sum_{T_{i,r} \in Q(i,j)} \int_{T}  (1 - u_1 - u_2) u_1
\begin{bmatrix} 
 \langle  g \mathbf{a}^{ii}_{1}, \mathbf{a}^{ij}_{1}   \rangle  &  \langle g \mathbf{a}^{ii}_{1}, \mathbf{a}^{ij}_{2}   \rangle \\
 \langle g \mathbf{a}^{ii}_{2}, \mathbf{a}^{ij}_{1}   \rangle &  \langle  g \mathbf{a}^{ii}_{2}, \mathbf{a}^{ij}_{2}   \rangle
\end{bmatrix} \sqrt{\det(g)} du_1 du_2. \notag 
\EEA

\subsection{Stiffness Matrix for Bochner Laplacian}\label{bochner-laplacian-computation-appendix}
The Bochner Laplacian $\Delta_B : TM \to TM$ can be (weakly) defined by the following formula:
\BEA
\label{weak-bochner}
\langle \Delta_B w, v \rangle_g = \langle \textup{grad}_g w, \textup{grad}_g v \rangle_g
\EEA
where the gradient $\textup{grad}_g$ of a vector field $v = v^i \frac{\partial}{\partial u^i}$ is a $(2,0)$ tensor field given by 
$$
\textup{grad}_g v = g^{kj} \left( \frac{\partial v^i}{\partial u^k} +v^p \Gamma^i_{pk} \right) \frac{\partial}{ \partial u^i} \otimes \frac{\partial} {\partial u^j}.
$$

Notice that \eqref{weak-bochner} denotes the Riemmanian inner product of two $(2,0)$ tensor fields. Recall that for tensor fields $a=a_{ij} \frac{\partial}{\partial_{u_1}} \otimes \frac{\partial}{\partial_{u_2}}$ and $b=b_{ij} \frac{\partial}{\partial_{u_1}}\otimes \frac{\partial}{\partial_{u_2}}$, the Riemannian inner product is defined as,
\BEA 
\langle a, b\rangle_g = \sum_{i,j,k,\ell} a_{ij} g_{ik} b_{k\ell} g_{j\ell}. \notag 
\EEA 
Interpreting $a,b,$ and $g$ as matrices written in the basis $\{ \frac{\partial}{\partial u_i} \otimes \frac{\partial}{\partial u_j} \}_{i,j=1}^d$, it is not difficult to show that this can be rewritten 
\BEA 
\langle a, b \rangle_g = \textup{trace} \left( a g b^\top g \right).  \notag 
\EEA

In this section, we compute the integrands necessary for employing the local mesh method with $D = \textup{grad}_g$. Recall the blockwise formula for the stiffness matrix:
\BEA
\begin{bmatrix} [\mathbf{S}]_{2i-1, 2j -1} & [\mathbf{S}]_{2i-1,2j} \\ [\mathbf{S}]_{2i, 2j-1} & [\mathbf{S}]_{2i,2j} \end{bmatrix} =
\sum_{T_{i,r} \in R(i)} \int_{T}  
\begin{bmatrix} 
 \langle \textup{grad}_g e_i \mathbf{t}^{(i)}_{1}, \textup{grad}_g e_j \mathbf{t}^{(j)}_{1}   \rangle_{g}  &  \langle \textup{grad}_g e_i \mathbf{t}^{(i)}_{1}, 
 \textup{grad}_g e_j \mathbf{t}^{(j)}_{2}   \rangle_{g} \\
 \langle  \textup{grad}_g e_i \mathbf{t}^{(i)}_{2}, \textup{grad}_g e_j \mathbf{t}^{(j)}_{1}   \rangle_{g} &  \langle \textup{grad}_g e_i \mathbf{t}^{(i)}_{2}, \textup{grad}_g e_j \mathbf{t}^{(j)}_{2}   \rangle_{g}
\end{bmatrix} \sqrt{\det(g)} du_1 du_2.\notag
\EEA
We begin by computing the diagonal case, $\langle \textup{grad}_g e_i \mathbf{t}^{(i)}_{k_1}, \textup{grad}_g e_i \mathbf{t}^{(i)}_{k_2}   \rangle_{g}$.

Simplifying the abuse of notation, we have  
\BEA 
\langle \textup{grad}_g e_i \mathbf{t}^{(i)}_{k_1}, \textup{grad}_g e_i \mathbf{t}^{(j)}_{k_2}   \rangle_{g} = \textup{trace} \left(  \mathbf{A}^{ii}_{k_1} g (\mathbf{A}^{ii}_{k_2})^\top g  \right). 
\EEA 
where $\mathbf{A}^{ii}_{k_1}, \mathbf{A}^{ii}_{k_2}$ correspond to the $2 \times 2$ matrix representations of $(2,0)$ tensor fields 

\BEA 
\mathbf{A}^{ii}_{k_1} = \textup{grad}_g \left[ (1-u_1 - u_2) (\mathbf{a}^{ii}_{k_1})_1 \partial_{u_1} + (1-u_1-u_2) (\mathbf{a}^{ii}_{k_1})_2 \partial_{u_2} \right], \notag \\
\mathbf{A}^{ii}_{k_2} = \textup{grad}_g \left[ (1 - u_1 - u_2) (\mathbf{a}^{ii}_{k_2})_1 \partial_{u_1} + (1 - u_1 - u_2) (\mathbf{a}^{ii}_{k_2})_2 \partial_{u_2} \right] \notag 
\EEA 
in the basis of $\{ \partial_{u_i} \otimes \partial_{u_j} \}_{i,j}$. By linearity, we have 
\BEA 
\mathbf{A}^{ii}_{k_1} = (\mathbf{a}^{(ii)}_{k_1})_1 
\textup{grad}_g [(1 - u_1 - u_2) \partial_{u_1}] + (\mathbf{a}^{(ii)}_{k_1})_2 \textup{grad}_g [(1 - u_1 - u_2) \partial_{u_2}]
 \notag \\
\mathbf{A}^{ii}_{k_2} = (\mathbf{a}^{(ii)}_{k_2})_1 
\textup{grad}_g [(1 - u_1 - u_2) \partial_{u_1}] + (\mathbf{a}^{(ii)}_{k_2})_2 \textup{grad}_g [(1 - u_1 - u_2) \partial_{u_2}]. \notag  
\EEA 
It remains to compute the matrices $\textup{grad}_g [(1 - u_1 - u_2) \partial_{u_1}] , \textup{grad}_g [(1 - u_1 - u_2) \partial_{u_2}]$, which amounts to plugging in to the local formula for $\textup{grad}_g$. 
In our case, gradient of a vector field $v = v^i \frac{\partial}{\partial_{u_i}}$, where $v^1 = 1-u_1-u_2$ and $v^2=0$, is given as,
\[
\textup{grad}_g v   = g^{kj} \left( \frac{\partial v^i}{\partial_{u_k}}+ v^p \Gamma^i_{pk} \right) \partial_{u_i} \otimes \partial_{u_j}.
\]
Expanding each component, we have
\BEA
\textup{grad}_g [(1 - u_1 - u_2) \partial_{u_1}]  &=& g^{k1} \left(\frac{\partial(1 - u_1 - u_2)}{\partial u_k} + (1 - u_1 - u_2)  \Gamma^1_{1k} \right) \partial_{u_1} \otimes \partial_{u_1} \notag \\
&& + g^{k2} \left(\frac{\partial(1 - u_1 - u_2)}{\partial u_k} + (1 - u_1 - u_2)  \Gamma^1_{1k} \right) \partial_{u_1} \otimes \partial_{u_2} \notag \\
&& + g^{k1} \left((1 - u_1 - u_2)  \Gamma^2_{1k} \right) \partial_{u_2} \otimes \partial_{u_1}  + g^{k2} \left((1 - u_1 - u_2)  \Gamma^2_{1k} \right) \partial_{u_2} \otimes \partial_{u_2} \notag
 \\
&=& \left( -g^{11} - g^{21} + (1 - u_1 - u_2)  (\Gamma^{1}_{11} g^{11} +\Gamma^{1}_{12} g^{21}) \right) \partial_{u_1} \otimes \partial_{u_1} \notag \\ 
&& + \left( -g^{12} - g^{22} + (1 - u_1 - u_2)  (\Gamma^{1}_{11} g^{12} +\Gamma^{1}_{12} g^{22}) \right) \partial_{u_1} \otimes \partial_{u_2} \notag \\
&& + \left( (1 - u_1 - u_2)  (\Gamma^{2}_{11} g^{11} +\Gamma^{2}_{12} g^{21}) \right) \partial_{u_2} \otimes \partial_{u_1} \notag \\
&& + \left( (1 - u_1 - u_2)  (\Gamma^{2}_{11} g^{12} +\Gamma^{2}_{12} g^{22}) \right) \partial_{u_2} \otimes \partial_{u_2} \notag
\EEA
which can be written in matrix form as follows:
\BEA 
\textup{grad}_g [(1 - u_1 - u_2) \partial_{u_1}]  = \left( (1-u_1 - u_2) \begin{bmatrix} 
\Gamma^1_{11} & \Gamma^1_{12} \\
\Gamma^2_{11} & \Gamma^2_{12}
\end{bmatrix} 
+ 
\begin{bmatrix} 
-1 & -1 \\
0 & 0
\end{bmatrix} \right) g^{-1} \notag 
\EEA 
Following the same calculation, we have:
\BEA 
\textup{grad}_g [(1 - u_1 - u_2) \partial_{u_2}]  = \left( (1-u_1 - u_2) \begin{bmatrix} 
\Gamma^1_{21} & \Gamma^1_{22} \\
\Gamma^2_{21} & \Gamma^2_{22}
\end{bmatrix} 
+ 
\begin{bmatrix} 
0 & 0 \\
-1 & -1
\end{bmatrix} \right) g^{-1} \notag.
\EEA 
This completes the computation for $i=j$. When $i \neq j$, we instead have 
\[ 
\langle \textup{grad}_g e_i \mathbf{t}^{(i)}_{k_1}, \textup{grad}_g e_i \mathbf{t}^{(j)}_{k_2}   \rangle_{g} = \textup{trace} \left(  \mathbf{A}^{ii}_{k_1} g (\mathbf{A}^{ij}_{k_2})^\top g  \right), 
\] 
where 
\BEA 
\mathbf{A}^{ij}_{k_2} = (\mathbf{a}^{ij}_{k_2})_1 
\textup{grad}_g [ u_1 \partial_{u_1}] + (\mathbf{a}^{ij}_{k_2})_2 \textup{grad}_g [ u_1 \partial_{u_2}]. \notag  
\EEA 
where $u_1$ in the above is obtained by reordering the vertices of a single triangle. The final quantities to compute are given by 
\BEA 
\textup{grad}_g [ u_1 \partial_{u_1}] &=&  
\left( u_1 
\begin{bmatrix}
  \Gamma^1_{11} & \Gamma^1_{12} \\
  \Gamma^2_{11} & \Gamma^2_{12}  
\end{bmatrix} + 
\begin{bmatrix}
    1 & 0 \\
    0 & 0
\end{bmatrix}
\right) g^{-1}
\notag \\
\textup{grad}_g [ u_1 \partial_{u_2}] &=& \left( u_1 
\begin{bmatrix}
  \Gamma^1_{21} & \Gamma^1_{22} \\
  \Gamma^2_{21} & \Gamma^2_{22}  
\end{bmatrix} + 
\begin{bmatrix}
    0 & 0 \\
    1 & 0
\end{bmatrix}
\right) g^{-1}.  \notag
\EEA 
In summary, when $i = j$, 
\BEA
\begin{bmatrix} [\mathbf{S}]_{2i-1, 2i -1} & [\mathbf{S}]_{2i-1,2i} \\ [\mathbf{S}]_{2i, 2i-1} & [\mathbf{S}]_{2i,2i} \end{bmatrix} =
\sum_{T_{i,r} \in R(i)} \int_{T}  
\begin{bmatrix} 
\textup{trace} \left(  \mathbf{A}^{ii}_{1} g (\mathbf{A}^{ii}_{1})^\top g  \right) &  \textup{trace} \left(  \mathbf{A}^{ii}_{1} g (\mathbf{A}^{ii}_{2})^\top g  \right) \\
 \textup{trace} \left(  \mathbf{A}^{ii}_{2} g (\mathbf{A}^{ii}_{1})^\top g  \right) &  \textup{trace} \left(  \mathbf{A}^{ii}_{2} g (\mathbf{A}^{ii}_{2})^\top g  \right)
\end{bmatrix} \sqrt{\det(g)} du_1 du_2,\notag
\EEA
while for $i \neq j$, we have
\BEA
\begin{bmatrix} [\mathbf{S}]_{2i-1, 2j -1} & [\mathbf{S}]_{2i-1,2j} \\ [\mathbf{S}]_{2i, 2j-1} & [\mathbf{S}]_{2i,2j} \end{bmatrix} =
\sum_{T_{i,r} \in Q(i,j)} \int_{T}  
\begin{bmatrix} 
\textup{trace} \left(  \mathbf{A}^{ii}_{1} g (\mathbf{A}^{ij}_{1})^\top g  \right) &  \textup{trace} \left(  \mathbf{A}^{ii}_{1} g (\mathbf{A}^{ij}_{2})^\top g  \right) \\
 \textup{trace} \left(  \mathbf{A}^{ii}_{2} g (\mathbf{A}^{ij}_{1})^\top g  \right) &  \textup{trace} \left(  \mathbf{A}^{ii}_{2} g (\mathbf{A}^{ij}_{2})^\top g  \right)
\end{bmatrix} \sqrt{\det(g)} du_1 du_2,\notag
\EEA
where in each case, $A^{ij}_k$ are as defined above.

\subsection{Stiffness Matrix for Hodge Laplacian}
\label{hodge-laplacian-computation-appedix}
The Hodge Laplacian is typically defined on a $k$-form $\omega$ by the formula 
$$
\Delta_H \omega = d^\star d \omega + d d^\star \omega
$$
where $d$ denotes the total differential, and $d^\star$ is the codifferential. The Hodge Laplacian can also be defined on vector fields by leveraging the musical isomorphisms:
$$
\Delta_H v := \sharp (d^\star d \omega + d d^\star) \flat v, 
$$

For the purposes of this paper, only the weak form is necessary:
\BEA 
\label{weak-hodge}
\langle \Delta_H v, w \rangle_g = \langle d \flat v, d \flat w \rangle_g + \langle d^\star \flat v, d^\star \flat w \rangle_g,
\EEA
for any $v, w \in \mathfrak{X}(M)$, so the first inner-product is defined over $2$-forms (or $(0,2)$-tensors) whereas the second inner product is defined over $0$-forms (or functions). Based on the above formula, all that is needed for computations is the action of $d$ and $d^\star$ on $1$-forms (on manifolds of dimension $2$). These formulas are given by
\BEA 
d \left( \omega_i d u^i \right) &=& \frac{\partial \omega_i}{\partial u^j} d u^j, \notag \\
d^\star \left( \omega_i d u^i \right) &=& (-1) \star d \star \left( \omega_i d u^i \right), \notag 
\EEA
where $\star$ denotes the hodge star operator, which is defined locally on a $k$-form $\omega$ to be the unique $(d-k)$ form $\star \omega$ satisfying the formula
\BEA 
\omega' \wedge \star \omega = \langle \omega' , \omega \rangle_g \sqrt{ \det g } d u_1 \wedge d u_2  \notag 
\EEA 
for any $k$-form $\omega'$. Since \eqref{weak-hodge}, as well as the above formula, contains inner-products of $k$-forms, we remind the reader that the inner product of two $k$ forms is given by
\BEA 
\langle \omega^1 \wedge \dots \wedge \omega^k , \eta^1 \wedge \dots \wedge \eta^k \rangle = \det ( \langle \omega^i, \eta^j \rangle_g ). \notag 
\EEA

The definition for the stiffness matrix can be initially simplified as follows:
\BEA
\label{hodge-stiffness}
&& \begin{bmatrix} [\mathbf{S}]_{2i-1, 2j -1} & [\mathbf{S}]_{2i-1,2j} \\ [\mathbf{S}]_{2i, 2j-1} & [\mathbf{S}]_{2i,2j} \end{bmatrix} =  \\
&& \sum_{T_{i,r} \in R(i)} \int_{T}  
\begin{bmatrix} 
 \langle d\flat e_i \mathbf{t}^{(i)}_{1}, d \flat e_j \mathbf{t}^{(j)}_{1}   \rangle_{g} + \langle d^\star \flat e_i \mathbf{t}^{(i)}_{1}, d^\star \flat e_j \mathbf{t}^{(j)}_{1}   \rangle_{g}  &   \langle d\flat e_i \mathbf{t}^{(i)}_{1}, d \flat e_j \mathbf{t}^{(j)}_{2}   \rangle_{g} + \langle d^\star \flat e_i \mathbf{t}^{(i)}_{1}, d^\star \flat e_j \mathbf{t}^{(j)}_{2}   \rangle_{g} \\
\langle d\flat e_i \mathbf{t}^{(i)}_{2}, d \flat e_j \mathbf{t}^{(j)}_{1}   \rangle_{g} + \langle d^\star \flat e_i \mathbf{t}^{(i)}_{2}, d^\star \flat e_j \mathbf{t}^{(j)}_{1}   \rangle_{g} &   \langle d\flat e_i \mathbf{t}^{(i)}_{2}, d \flat e_j \mathbf{t}^{(j)}_{2}   \rangle_{g} + \langle d^\star \flat e_i \mathbf{t}^{(i)}_{2}, d^\star \flat e_j \mathbf{t}^{(j)}_{2}   \rangle_{g}
\end{bmatrix} \sqrt{\det(g)} du_1 du_2.\notag
\EEA
We'll begin by computing $d \flat v$ and $d^\star \flat v$ on basis functions in each case. Note that in general, the nodal basis functions are a linear combination of 
$$
v = c_1 \hat{\phi} \frac{\partial}{\partial u_1} + c_2 \hat{\phi} \frac{\partial }{\partial u_2},
$$
where $\hat{\phi} = 1 - u_1 - u_2$, or 
$$
v = c_1 u_1 \frac{\partial}{\partial u_1} + c_2 u_1 \frac{\partial }{\partial u_2}.
$$
Hence, we handle the following four cases:
\BEA 
\textup{case 1:}&& \quad v - (1-u_1 - u_2) \frac{\partial}{\partial u_1} \notag \\
\textup{case 2:}&& \quad v = (1-u_1 - u_2) \frac{\partial}{\partial u_2} \notag \\
\textup{case 3:}&& \quad v = u_1\frac{\partial}{\partial u_1} \notag \\
\textup{case 4:}&& \quad v = u_1\frac{\partial}{\partial u_2} \notag 
\EEA

For case 1, we have $\flat v = (1-u_1 - u_2) g_{11} du_1 + (1-u_1 - u_2) g_{12} du_2$. Hence, 
\BEA
d \flat v &=& \frac{\partial (g_{11} \hat{\phi})}{\partial u_2} du_2 \wedge du_1 + \frac{\partial ( g_{12} \hat{\phi})}{\partial u_1} du_1 \wedge du_2 = \left( \frac{\partial ( g_{12} \hat{\phi})}{\partial u_1} - \frac{\partial (g_{11} \hat{\phi})}{\partial u_2} \right) du_1 \wedge du_2 \notag \\
&=& \left( g_{11} - g_{12} + \hat{\phi} \frac{\partial g_{12}}{\partial u_1} -  \hat{\phi}\frac{\partial g_{11}}{\partial u_2} \right) du_1 \wedge du_2 := A_{1} du_1 \wedge du_2.  \notag
\EEA
Similarly, we have that
\BEA 
d^\star \flat v = (-1) \star d \star \left( \hat{\phi} g_{11} du_1 + \hat{\phi} g_{12} du_2 \right).
\notag 
\EEA 
The formula for $\star$ shows that $\star (du_1) = -g^{12} \sqrt{\det(g)} du_1 + g^{11} \sqrt{\det(g)} du_2$, $\star(du_2) = -g^{22} \sqrt{\det(g)} du_1 + g^{12} \sqrt{\det(g)} du_2$, and $\star(du_1 \wedge du_2) = \det(g^{-1}) \sqrt{\det(g)} = (\det(g))^{-1/2}$. Hence, 
\BEA 
d^\star \flat v &=& (-1) \star d  \left( \hat{\phi}\sqrt{\det(g)} \left( (-g_{11} g^{12} - g_{12}g^{22})du_1   + (g_{12}g^{12} + g_{11}g^{11})du_2\right) \right) \notag \\
&=& (-1) \star d  \left( \hat{\phi}\sqrt{\det(g)} \left(1 du_2\right) \right) \notag \\
&=& (-1) \star \left( \frac{\partial}{\partial u_1} \left( \hat{\phi}\sqrt{\det(g)} \right) du_1 \wedge du_2 \right) 
\notag \\
&=& \frac{-1}{\sqrt{\det(g)}} \left( \frac{\partial}{\partial u_1} \left( \hat{\phi}\sqrt{\det(g)} \right) \right) \notag \\
&=& 1 - \frac{\hat{\phi}}{\sqrt{\det g}} \frac{\partial \sqrt{\det g}}{\partial u_1} := A_3 \notag
\EEA
Similar computations yield the results for other three cases. The results are as follows:
\BEA 
\textup{ case 2:} && \quad d \flat v = \left( g_{12} - g_{22} + \hat{\phi} \frac{\partial g_{22} }{\partial u_1} - \hat{\phi} \frac{\partial g_{12}}{\partial u_2} \right) du_1 \wedge du_2 :=  A_2 du_1 \wedge du_2 \notag \\
&& \quad d^\star \flat v = 1 - \frac{\hat{\phi}}{\sqrt{\det g}} \frac{\partial \sqrt{\det g}}{\partial u_2} := A_4
\notag \\
\textup{ case 3:} && \quad d \flat v = \left( g_{21} + u_1 \frac{\partial g_{21}}{\partial u_1} - u_1 \frac{\partial g_{11}}{\partial u_2} \right) du_1 \wedge du_2 := B_{1} du_1 \wedge du_2  \notag \\
&& \quad d^\star \flat v = -1 - \frac{u_1}{\sqrt{\det g}} \frac{\partial \sqrt{\det g}}{\partial u_1} := B_3 \notag \\
\textup{ case 4:} && \quad d\flat v = \left( g_{22} + u_1 \frac{\partial g_{22}}{\partial u_1} - u_1 \frac{\partial g_{12}}{\partial u_2}\right) du_1 \wedge du_2 := B_2 du_1 \wedge du_2 \notag \\
&& \quad d^\star \flat v = - \frac{u_1}{\sqrt{\det g}} \frac{\partial \sqrt{\det g}}{\partial u_2} := B_4. \notag 
\EEA 

We can now compute the first term completely in the two cases: the diagonal blocks, and the off diagonal blocks. On the diagonal blocks, (meaning $i=j$), we have 
\BEA
\langle d \flat e_i \mathbf{t}^{(i)}_{r},  d \flat e_i \mathbf{t}^{(i)}_{s} \rangle_g &=& (\mathbf{a}^{ii}_r)_1 (\mathbf{a}^{ii}_s)_1 A^2_{1} \langle du_1 \wedge du_2, du_1 \wedge du_2 \rangle_g + (\mathbf{a}^{ii}_r)_2  (\mathbf{a}^{ii}_s)_1 A_1 A_2 \langle du_1 \wedge du_2, du_1 \wedge du_2 \rangle_g \notag \\
&+& (\mathbf{a}^{ii}_r)_1  (\mathbf{a}^{ii}_s)_2 A_1 A_2 \langle du_1 \wedge du_2, du_1 \wedge du_2 \rangle_g + (\mathbf{a}^{ii}_r)_2  (\mathbf{a}^{ii}_s)_2 A_2^2 \langle du_1 \wedge du_2, du_1 \wedge du_2 \rangle_g \notag \\
&=& \frac{1}{\det g} \left( (\mathbf{a}^{ii}_r)_1  (\mathbf{a}^{ii}_s)_1 A^2_{1} + (\mathbf{a}^{ii}_r)_2  (\mathbf{a}^{ii}_s)_1 A_1 A_2 + (\mathbf{a}^{ii}_r)_1  (\mathbf{a}^{ii}_s)_2 A_1 A_2 + (\mathbf{a}^{ii}_r)_2  (\mathbf{a}^{ii}_s)_2 A_2^2 \right) \label{hodge-diag-1}
\EEA
Similarly, 
\BEA
\langle d^\star \flat e_i \mathbf{t}^{(i)}_{r},  d^\star \flat e_i \mathbf{t}^{(i)}_{s} \rangle_g =  \left(  (\mathbf{a}^{ii}_r)_1 (\mathbf{a}^{ii}_s)_1 A^2_{3} +  (\mathbf{a}^{ii}_r)_2 (\mathbf{a}^{ii}_s)_1 A_3 A_4 +  (\mathbf{a}^{ii}_r)_1 (\mathbf{a}^{ii}_s)_2 A_3 A_4 +  (\mathbf{a}^{ii}_r)_2  (\mathbf{a}^{ii}_s)_2 A_4^2 \right). \label{hodge-diag-2}
\EEA
Whence, its clear that the formula for the stiffness matrix on the diagonal elements is given by \eqref{hodge-stiffness}, plugging in the computed values from \eqref{hodge-diag-1}, \eqref{hodge-diag-2}.

On the off diagonal blocks, (meaning $i \neq j$), we have
\BEA
\langle d \flat e_i \mathbf{t}^{(i)}_{r},  d \flat e_i \mathbf{t}^{(j)}_{s} \rangle_g &=& (\mathbf{a}^{ii}_r)_1 (\mathbf{a}^{ij}_s)_1 A_1 B_1 \langle du_1 \wedge du_2, du_1 \wedge du_2 \rangle_g  + (\mathbf{a}^{ii}_r)_1 (\mathbf{a}^{ij}_s)_2 A_1 B_2 \langle du_1 \wedge du_2, du_1 \wedge du_2 \rangle_g \notag \\
&+& (\mathbf{a}^{ii}_r)_2 (\mathbf{a}^{ij}_s)_1 A_2 B_1 \langle du_1 \wedge du_2, du_1 \wedge du_2 \rangle_g + (\mathbf{a}^{ii}_r)_2 (\mathbf{a}^{ij}_s)_2 A_2 B_2 \langle du_1 \wedge du_2, du_1 \wedge du_2 \rangle_g  \notag \\
&=&  \frac{1}{\det g} \left( (\mathbf{a}^{ii}_r)_1 (\mathbf{a}^{ij}_s)_1 A_1 B_1 + (\mathbf{a}^{ii}_r)_1 (\mathbf{a}^{ij}_s)_2 A_1 B_2 + (\mathbf{a}^{ii}_r)_2 (\mathbf{a}^{ij}_s)_1 A_2 B_1 + (\mathbf{a}^{ii}_r)_2 (\mathbf{a}^{ij}_s)_2 A_2 B_2 \right). \label{hodge-offdiag-1}
\EEA
Similarly, 
\BEA
\langle d^\star \flat e_i \mathbf{t}^{(i)}_{r},  d^\star \flat e_i \mathbf{t}^{(j)}_{s} \rangle_g =  \left( (\mathbf{a}^{ii}_r)_1 (\mathbf{a}^{ij}_s)_1 A_{3}B_{3} + (\mathbf{a}^{ii}_r)_2 (\mathbf{a}^{ij}_s)_1 B_3 A_4 + (\mathbf{a}^{ii}_r)_1 (\mathbf{a}^{ij}_s)_2 A_3 B_4 + (\mathbf{a}^{ii}_r)_2 (\mathbf{a}^{ij}_s)_2 A_4 B_4\right). \label{hodge-offdiag-2}
\EEA
From this, its clear that when $i=j$, the formula for thee stiffness matrix is given by \eqref{hodge-stiffness}, plugging in the computed values from \eqref{hodge-offdiag-1}, \eqref{hodge-offdiag-2}.

\section{Proofs of Technical Lemmas}
\label{proofs-of-technical-lemmas}

\begin{proof}[Proof of Lemma \ref{local-metric-approximation-error} ]

For simplicity of notation, we do not explicitly write evaluation of each quantity on $\vec{v}$. First, notice that
\BEA 
\partial_{v_1}(\gamma_{x_i}) &=& \Bigg(1,0, \frac{\partial z_{x_i}}{\partial v_1} \Bigg)^\top, \notag \\
\partial_{v_2}(\gamma_{x_i}) &=& \Bigg(0,1, \frac{\partial z_{x_i}}{\partial v_2} \Bigg)^\top.\notag 
\EEA
Similarly, 
\BEA 
\partial_{v_1}(\alpha_{x_i}) &=&  \Bigg(1,0, \frac{\partial p_i}{\partial v_1} \Bigg)^\top  \notag \\
\partial_{v_2}(\alpha_{x_i}) &=&  \Bigg(0,1, \frac{\partial p_i}{\partial v_2} \Bigg)^\top.  \notag 
\EEA
Using \eqref{GMLSerrorbdd}, we see that 
\BEA 
  \left\| \partial_{v_k} \left(  \gamma_{x_i} \right)(\vec{v}) - \partial_{v_k} \left(  \alpha_{x_i} \right)(\vec{v})  \right\| &=& O(h_{X, M}^2). \notag 
\EEA 
Moreover, adding and subtracting $(\partial p_i / \partial v_q) (\partial z_{x_i}  / \partial v_s)$, we find that 
    \BEA 
    |(g_{\alpha_{x_i}})_{qs} - (g_{\gamma_{x_i}})_{qs}| &\leq&  
    |(\partial p_i / \partial v_q) (\partial p_i  / \partial v_s) - (\partial p_i / \partial v_q) (\partial z_{x_i}  / \partial v_s) | \notag \\  
    &+&  |(\partial p_i / \partial v_q) (\partial z_{x_i}  / \partial v_s) - (\partial z_{x_i} / \partial v_q) (\partial z_{x_i}  / \partial v_s)| \notag \\
    &=& O(h_{X, M}^2). \notag 
    \EEA 
    Similarly, adding and subtracting $(g_{\alpha_{x_i}})_{11} (g_{\gamma_{x_i}})_{22} + (g_{\alpha_{x_i}})_{12} (g_{\gamma_{x_i}})_{21}$, we have that  
    \BEA 
     | \det(g_{\gamma_{x_i}}) - \det(g_{p_i})  | &\leq& \Bigg| (g_{\gamma_{x_i}})_{11} (g_{\gamma_{x_i}})_{22} - (g_{\alpha_{x_i}})_{11} (g_{\gamma_{x_i}})_{22} \Bigg| \notag \\
     &+& \Bigg| (g_{\alpha_{x_i}})_{11} (g_{\gamma_{x_i}})_{22}  - (g_{\alpha_{x_i}})_{11} (g_{\alpha_{x_i}})_{22}  \Bigg| \notag \\
     &+& \Bigg| (g_{\gamma_{x_i}})_{12} (g_{\gamma_{x_i}})_{21} - (g_{\alpha_{x_i}})_{12} (g_{\gamma_{x_i}})_{21} \Bigg| \notag \\
     &+& \Bigg| (g_{\alpha_{x_i}})_{12} (g_{\gamma_{x_i}})_{21}  - (g_{\alpha_{x_i}})_{12} (g_{\alpha_{x_i}})_{21}  \Bigg| \notag \\
     &=& O(h_{X,M}^2).  \notag 
    \EEA 
    Since $\det(g_{\gamma_{x_i}})$ is bounded away from $0$ independent of $x_i$, we have that   
    \BEA 
     \left| \sqrt{\det(g_{\gamma_{x_i}})} - \sqrt{\det(g_{p_i})}  \right| = O(h_{X,M}). \notag 
    \EEA
    Using the above facts along with the entrywise formula for the coefficients of the inverse (see, for instance, Chapter 5, Theorem 7 of \cite{lax2007linear}), its immediate to see that,
    \BEA 
       \left| g^{qs}_{\gamma_{x_i}} - g^{qs}_{p_i}\right| = O(h_{X,M}).  \notag 
    \EEA 
    For the Christoffel symbols, recall that $p_i$ can estimate two derivatives of $z_{x_i}$ up to order $h_{X,M}$. Since 
    \BEA 
    \frac{\partial (g_{\gamma_{x_i}})_{qs}}{ \partial v_t} &=&  (\partial^2 z_{x_i} / \partial v_q v_t) (\partial z_{x_i}  / \partial v_s) +  (\partial z_{x_i} / \partial v_q) (\partial^2 z_{x_i}  / \partial v_s v_t) \notag \\
    \frac{ \partial (g_{\alpha_{x_i}})_{qs} } { \partial v_t } &=&  (\partial^2 p_{i} / \partial v_q v_t) (\partial p_{i}  / \partial v_s) +  (\partial p_{i} / \partial v_q) (\partial^2 p_{i}  / \partial v_s v_t), \notag 
    \EEA
    adding and subtracting $ (\partial^2 z_{x_i} / \partial v_q v_t) (\partial p_{i}  / \partial v_s) +  (\partial z_{x_i} / \partial v_q) (\partial^2 p_{i}  / \partial v_s v_t) $, we see that 
    \BEA 
    \Bigg| \partial (g_{\gamma_{x_i}})_{qs} / \partial v_t -  \partial (g_{p_i})_{qs} / \partial v_t \Bigg| &\leq& \Bigg| (\partial^2 z_{x_i} / \partial v_q v_t) (\partial z_{x_i}  / \partial v_s) - (\partial^2 z_{x_i} / \partial v_q v_t) (\partial p_{i}  / \partial v_s)  \Bigg| \notag \\
    &+& \Bigg| (\partial^2 z_{x_i} / \partial v_q v_t) (\partial p_{i}  / \partial v_s) - (\partial^2 p_{i} / \partial v_q v_t) (\partial p_{i}  / \partial v_s) \Bigg| \notag \\
    &+& \Bigg|(\partial z_{x_i} / \partial v_q) (\partial^2 z_{x_i}  / \partial v_s v_t) -  (\partial z_{x_i} / \partial v_q) (\partial^2 p_{i}  / \partial v_s v_t) \Bigg| \notag \\
    &+& \Bigg|(\partial z_{x_i} / \partial v_q) (\partial^2 p_{i}  / \partial v_s v_t) - (\partial p_{i} / \partial v_q) (\partial^2 p_{i}  / \partial v_s v_t) \Bigg| \notag \\
    &=& O(h_{X,M}). \notag 
    \EEA
    Plugging into the well known formula for the Christoffel symbols in terms of the metric tensor and leveraging estimates above, it follows immediately that 
    \BEA 
   \left|\Gamma^q_{st}\left( p_i \right) -   \Gamma^q_{st}(\gamma_{x_i}) \right| = O(h_{X,M}). \notag 
    \EEA 
    This completes the proof. 
\end{proof}

\begin{proof}[Proof of Lemma \ref{local-integral-convergence-rate}]
    Note that,
    $$
    (\nabla W_N)_{\gamma_{x_i}} := g_{\gamma_{x_i}}^{sj} \Bigg(  \frac{\partial W_N^t}{\partial v_s} + W_N^p \Gamma^t_{ps}(\gamma_{x_i}) \Bigg) \frac{\partial}{ \partial v_t} \otimes \frac{\partial} {\partial v_j}.
    $$ 
    Similarly, 
    $$
    (\nabla \widehat{W}_N)_{\alpha_{x_i}} := g_{\alpha_{x_i}}^{sj} \Bigg(  \frac{\partial \widehat{W}_N^t}{\partial v_s} + \widehat{W}_N^p \Gamma^t_{ps}(\alpha_{x_i}) \Bigg) \frac{\partial}{ \partial v_t} \otimes \frac{\partial}{\partial v_j}.
    $$ 

 It follows from Lemmas \ref{local-metric-approximation-error} and \ref{local-vector-field-approximation-error} that 
\BEA 
|g_{\gamma_{x_i}}^{sj} - g_{\alpha_{x_i}}^{sj}| = O(h_{X,M}), & |(g_{\gamma_{x_i}})_{sj} - (g_{\alpha_{x_i}})_{sj})| = O(h_{X,M}) \notag \\
| W_N^p -  \widehat{W}_N^p| = O(h_{X,M}), & 
\left| \frac{\partial W_N^t}{\partial v_s} - \frac{\partial \widehat{W}_N^t}{\partial v_s} 
 \right| = O(h_{X,M}) \notag \\
 | \Gamma^t_{ps}(\gamma_{x_i}) - \Gamma^t_{ps}(\alpha_{x_i})| = O(h_{X,M}), & \left|\sqrt{ \textup{det}\left(g_{\gamma_{x_i}} \right)} - \sqrt{ \textup{det}\left(g_{\alpha_{x_i}} \right)}\right| = O(h_{X,M}). \notag 
\EEA
 Combining these estimates, simple algebraic manipulations show that 
\BEA 
 \Bigg | \Bigg\langle (\nabla U)_{\gamma_{x_i}} , (\nabla W)_{\gamma_{x_i}}
 \Bigg\rangle_{g_{\gamma_{x_i}}} \sqrt{ \textup{det}\left(g_{\gamma_{x_i}} \right)} -  \Bigg\langle  (\nabla \widehat{U})_{\alpha_{x_i}}, (\nabla \widehat{W})_{\alpha_{x_i}}
\Bigg\rangle_{g_{\alpha_{x_i}}} \sqrt{ \textup{det}\left(g_{\alpha_{x_i}} \right)} \Bigg| = O(h_{X,M}) \notag, 
\EEA
and therefore that 
\BEA 
 \Bigg | \int_{T_{i,r}} \Bigg[ \Bigg\langle (\nabla U)_{\gamma_{x_i}} , (\nabla W)_{\gamma_{x_i}}
 \Bigg\rangle_{g_{\gamma_{x_i}}} \sqrt{ \textup{det}\left(g_{\gamma_{x_i}} \right)} -  \Bigg\langle  (\nabla \widehat{U})_{\alpha_{x_i}}, (\nabla \widehat{W})_{\alpha_{x_i}}
\Bigg\rangle_{g_{\alpha_{x_i}}} \sqrt{ \textup{det}\left(g_{\alpha_{x_i}} \right)} \Bigg] dv_1 dv_2\Bigg| = O( \text{Vol}(T_{i,r}) h_{X,M}) \notag. 
\EEA
This completes the proof for the first bound. The second bound follows the same argument. 
\end{proof}

\bibliographystyle{plain}  
\bibliography{references}

@incollection{stewart1978perturbation,
  title={Perturbation theory for the generalized eigenvalue problem},
  author={Stewart, GW},
  booktitle={Recent advances in numerical analysis},
  pages={193--206},
  year={1978},
  publisher={Elsevier}
}

@article{calder2020properly,
  title={Properly-weighted graph Laplacian for semi-supervised learning},
  author={Calder, Jeff and Slep{\v{c}}ev, Dejan},
  journal={Applied mathematics \& optimization},
  volume={82},
  number={3},
  pages={1111--1159},
  year={2020},
  publisher={Springer}
}

@article{belkin2002semi,
  title={Semi-supervised learning on manifolds},
  author={Belkin, Mikhail and Niyogi, Partha},
  journal={Machine Learning Journal},
  volume={1},
  year={2002},
  publisher={Citeseer}
}

@article{von2008consistency,
  title={Consistency of spectral clustering},
  author={Von Luxburg, Ulrike and Belkin, Mikhail and Bousquet, Olivier},
  journal={The Annals of Statistics},
  pages={555--586},
  year={2008},
  publisher={JSTOR}
}

@inproceedings{ng2002spectral,
  title={On spectral clustering: Analysis and an algorithm},
  author={Ng, Andrew Y and Jordan, Michael I and Weiss, Yair},
  booktitle={Advances in neural information processing systems},
  pages={849--856},
  year={2002}
}

@article{berry2016local,
  title={Local kernels and the geometric structure of data},
  author={Berry, Tyrus and Sauer, Timothy},
  journal={Applied and Computational Harmonic Analysis},
  volume={40},
  number={3},
  pages={439--469},
  year={2016},
  publisher={Elsevier}
}

@article{jiang2024generalized,
  title={Generalized finite difference method on unknown manifolds},
  author={Jiang, Shixiao Willing and Li, Rongji and Yan, Qile and Harlim, John},
  journal={Journal of Computational Physics},
  volume={502},
  pages={112812},
  year={2024},
  publisher={Elsevier}
}

@inproceedings{liang2012geometric,
  title={Geometric understanding of point clouds using Laplace-Beltrami operator},
  author={Liang, Jian and Lai, Rongjie and Wong, Tsz Wai and Zhao, Hongkai},
  booktitle={2012 IEEE conference on computer vision and pattern recognition},
  pages={214--221},
  year={2012},
  organization={IEEE}
}

@article{liang2013solving,
  title={Solving partial differential equations on point clouds},
  author={Liang, Jian and Zhao, Hongkai},
  journal={SIAM Journal on Scientific Computing},
  volume={35},
  number={3},
  pages={A1461--A1486},
  year={2013},
  publisher={SIAM}
}

@article{singer2017spectral,
  title={{Spectral convergence of the connection Laplacian from random samples}},
  author={Singer, Amit and Wu, Hau-Tieng},
  journal={Information and Inference: A Journal of the IMA},
  volume={6},
  number={1},
  pages={58--123},
  year={2017},
  publisher={Oxford University Press}
}

@book{arnold2018finite,
  title={Finite element exterior calculus},
  author={Arnold, Douglas N},
  year={2018},
  publisher={SIAM}
}

@book{hirani2003discrete,
  title={Discrete exterior calculus},
  author={Hirani, Anil Nirmal},
  year={2003},
  publisher={California Institute of Technology}
}

@article{belkin2003laplacian,
  title={Laplacian eigenmaps for dimensionality reduction and data representation},
  author={Belkin, Mikhail and Niyogi, Partha},
  journal={Neural computation},
  volume={15},
  number={6},
  pages={1373--1396},
  year={2003},
  publisher={MIT Press}
}

@article{berry2020spectral,
  title={Spectral exterior calculus},
  author={Berry, Tyrus and Giannakis, Dimitrios},
  journal={Communications on Pure and Applied Mathematics},
  volume={73},
  number={4},
  pages={689--770},
  year={2020},
  publisher={Wiley Online Library}
}

@article{zhang2004principal,
  title={Principal manifolds and nonlinear dimensionality reduction via tangent space alignment},
  author={Zhang, Zhenyue and Zha, Hongyuan},
  journal={SIAM journal on scientific computing},
  volume={26},
  number={1},
  pages={313--338},
  year={2004},
  publisher={SIAM}
}

@article{donoho2003hessian,
  title={Hessian eigenmaps: Locally linear embedding techniques for high-dimensional data},
  author={Donoho, David L and Grimes, Carrie},
  journal={Proceedings of the National Academy of Sciences},
  volume={100},
  number={10},
  pages={5591--5596},
  year={2003},
  publisher={National Acad Sciences}
}

@book{golub2013matrix,
  title={Matrix computations},
  author={Golub, Gene H and Van Loan, Charles F},
  year={2013},
  publisher={JHU press}
}

@article{trillos2020error,
  title={Error estimates for spectral convergence of the graph {Laplacian} on random geometric graphs toward the {Laplace--Beltrami} operator},
  author={Trillos, Nicol{\'a}s Garc{\'\i}a and Gerlach, Moritz and Hein, Matthias and Slep{\v{c}}ev, Dejan},
  journal={Foundations of Computational Mathematics},
  volume={20},
  number={4},
  pages={827--887},
  year={2020},
  publisher={Springer}
}

@article{coifman2006diffusion,
  title={Diffusion Maps},
  author={Coifman, R.R. and Lafon, S.},
  journal={Applied and Computational Harmonic Analysis},
  volume={21},
  number={1},
  pages={5--30},
  year={2006},
  publisher={}
}

@article{calder2019improved,
  title={Improved spectral convergence rates for graph Laplacians on $\epsilon$-graphs and $k$-NN graphs},
  author={Calder, J. and Garc{\'i}a Trillos, N.},
  journal={arXiv preprint},
  volume={},
  number={},
  pages={},
  year={2019},
  publisher={}
}

@article{peoples2026spectral,
title = {Spectral convergence of symmetrized graph Laplacian on manifolds with boundary},
journal = {Foundations of Data Science},
volume = {8},
pages = {119-167},
year = {2026},
author = {J. Wilson Peoples and John Harlim}
}

@article{tyagi2013tangent,
  title={Tangent space estimation for smooth embeddings of Riemannian manifolds},
  author={Tyagi, Hemant and Vural, El{\i}f and Frossard, Pascal},
  journal={Information and Inference: A Journal of the IMA},
  volume={2},
  number={1},
  pages={69--114},
  year={2013},
  publisher={OUP}
}

@book{demmel1997applied,
  title={Applied numerical linear algebra},
  author={Demmel, James W},
  year={1997},
  publisher={SIAM}
}

@article{lai2013local,
  title={A LOCAL MESH METHOD FOR SOLVING PDES ON POINT CLOUDS.},
  author={Lai, Rongjie and Liang, Jiang and Zhao, Hongkai},
  journal={Inverse Problems \& Imaging},
  volume={7},
  number={3},
  year={2013}
}

@article{harlim2023radial,
  title={Radial basis approximation of tensor fields on manifolds: from operator estimation to manifold learning},
  author={Harlim, John and Jiang, Shixiao Willing and Peoples, John Wilson},
  journal={Journal of Machine Learning Research},
  volume={24},
  number={345},
  pages={1--85},
  year={2023}
}

@article{duffin1959distributed,
  title={Distributed and lumped networks},
  author={Duffin, Richard J},
  journal={Journal of Mathematics and Mechanics},
  pages={793--826},
  year={1959},
  publisher={JSTOR}
}

@article{mirzaei2012generalized,
  title={On generalized moving least squares and diffuse derivatives},
  author={Mirzaei, Davoud and Schaback, Robert and Dehghan, Mehdi},
  journal={IMA Journal of Numerical Analysis},
  volume={32},
  number={3},
  pages={983--1000},
  year={2012},
  publisher={OUP}
}

@book{lax2007linear,
  title={Linear algebra and its applications},
  author={Lax, Peter D},
  volume={78},
  year={2007},
  publisher={John Wiley \& Sons}
}

@book{brenner2008mathematical,
  title={The mathematical theory of finite element methods},
  author={Brenner, Susanne C},
  year={2008},
  publisher={Springer}
}

@article{candes2011robust,
  title={Robust principal component analysis?},
  author={Cand{\`e}s, Emmanuel J and Li, Xiaodong and Ma, Yi and Wright, John},
  journal={Journal of the ACM (JACM)},
  volume={58},
  number={3},
  pages={1--37},
  year={2011},
  publisher={ACM New York, NY, USA}
}

@article{zhan2011robust,
  title={Robust local tangent space alignment via iterative weighted PCA},
  author={Zhan, Yubin and Yin, Jianping},
  journal={Neurocomputing},
  volume={74},
  number={11},
  pages={1985--1993},
  year={2011},
  publisher={Elsevier}
}

@article{singer2012vector,
  title={Vector diffusion maps and the connection Laplacian},
  author={Singer, Amit and Wu, H-T},
  journal={Communications on pure and applied mathematics},
  volume={65},
  number={8},
  pages={1067--1144},
  year={2012},
  publisher={Wiley Online Library}
}

\end{document}